\theoremstyle{plain}
\newtheorem{thm}{\protect\theoremname}[section]
  \theoremstyle{remark}
  \newtheorem{rem}[thm]{\protect\remarkname}
  \theoremstyle{definition}
  \newtheorem{example}[thm]{\protect\examplename}
  \theoremstyle{definition}
  \newtheorem{defn}[thm]{\protect\definitionname}
  \theoremstyle{plain}
  \newtheorem{cor}[thm]{\protect\corollaryname}
  \theoremstyle{plain}
  \newtheorem{lem}[thm]{\protect\lemmaname}
\newenvironment{keywords}{ \noindent\footnotesize\textbf{Keywords and phrases:}}{}
\newenvironment{class}{\noindent\footnotesize\textbf{Mathematics subject classification 2000:}}{}
\newcommand*{\trace}{\operatorname{trace}}
\newcommand*{\dive}{\operatorname{div}}
\newcommand*{\Grad}{\operatorname{Grad}}
\newcommand*{\Div}{\operatorname{Div}}
\renewcommand*{\i}{\mathrm{i}}
\DeclareMathAccent{\Circ}{\mathalpha}{operators}{"17}
\newcommand{\interior}[1]{\Circ{#1}}
\DeclareMathAccent{\fiss}{\mathalpha}{operators}{"15}
\renewcommand{\Im}{\operatorname{\mathfrak{Im}}}
\renewcommand{\Re}{\operatorname{\mathfrak{Re}}}
\newcommand{\sym}{\operatorname{sym}}
\renewcommand{\skew}{\operatorname{skew}}
\newcommand{\lspan}{\operatorname{span}}
\newcommand{\oi}[2]{\left]#1,#2 \right[}
\newcommand{\rga}[1]{\left]#1,\infty  \right[}
\newcommand{\lci}[2]{\left[#1,#2 \right[}
\renewcommand{\tilde}{\widetilde}
\renewcommand*{\epsilon}{\varepsilon}
\renewcommand*{\theta}{\vartheta}
  \providecommand{\corollaryname}{Corollary}
  \providecommand{\definitionname}{Definition}
  \providecommand{\examplename}{Example}
  \providecommand{\lemmaname}{Lemma}
  \providecommand{\remarkname}{Remark}
\providecommand{\theoremname}{Theorem}
\begin{document}
\institut{Institut f\"ur Analysis}

\preprintnumber{MATH-AN-12-2013}

\preprinttitle{On Some Models for Elastic Solids with Micro-Structure.}

\author{Rainer Picard, Sascha Trostorff \& Marcus Waurick} 

\makepreprinttitlepage

\setcounter{section}{-1}

\date{}

\title{On Some Models for Elastic Solids with Micro-Structure.}

\author{Rainer Picard, Sascha Trostorff \& Marcus Waurick \thanks{Institut für Analysis,Fachrichtung Mathematik, Technische Universität Dresden, Germany, rainer.picard@tu-dresden.de, sascha.trostorff@tu-dresden.de, marcus.waurick@tu-dresden.de} }
\maketitle
\begin{abstract}
\noindent \textbf{Abstract.} We review the concept of well-posedness
in the context of evolutionary problems from mathematical physics
for a particular subclass of problems from elasticity theory. The
complexity of physical phenomena appears as encoded in so called material
laws. The usefulness of the structural perspective developed is illustrated
by showing that many initial boundary value problems in the theory
of elastic solids share the same type of solution theory. Moreover,
interconnections of the respective models are discussed via a previously
introduced mother/descendant mechanism.
\end{abstract}
\begin{keywords}
material laws, elasticity theory, visco-elasticity, coupled systems, causality, evolutionary equations, energy conservation \end{keywords}

\begin{class}
35F10 Initial value problems for linear first-order PDE, linear evolution equations,
47B25 Symmetric and selfadjoint operators (unbounded),
47F05 Partial differential operators,
47G20 Integro-differential operators,
35L90 Abstract hyperbolic evolution equations,
74B05 Classical linear elasticity,
74H20 Existence of solutions,
74H25 Uniqueness of solutions,
74M25 Micromechanics.
\end{class}\setcounter{page}{1}
\newpage

\tableofcontents{}

\newpage
\setcounter{section}{-1}

\section{Introduction}

In more complex considerations of elasticity theory of compound media
their internal fine structure needs to be accounted for. The fundamental
concept originated with the work of the Cosserat brothers, \cite{40.0862.02}.
At the time, however, their ideas did not receive the immediate widespread
attention of the mechanics community. The idea to account for additional
degrees of freedom stemming from the fine structure of media was to
have an additional set of elastic equations on the micro-level coupling
with an elastic equation on the macro-level. Most significantly the
particular coupling induces non-symmetry into the strain tensor, which
was a standard assumption up to then. In the 60's of the last century
such complex media became the focus of attention resulting in rediscovery,
revival and further development of the ideas of the Cosserat brothers.
We mention here in particular the seminal contributions by Aero and
Kuvshinski \cite{MR0162404}, Mindlin \cite{0119.40302}, Nowacki
\cite{zbMATH03456512,zbMATH03490546} and Eringen \cite{zbMATH03352829}.
For a more detailed historical and bibliographic survey see \cite[Chapter I]{zbMATH06233830}. 

The intention of this paper is to investigate some of the models concerned
with fine-structural elasticity effects in the framework of a theory
which allows for a unified perspective. Indeed, it has been found
that many classical equations of mathematical physics share a common
form, see e.g. \cite{pre05919306}. The general form can be described
loosely as follows: we look for $U$ and $V$ satisfying 
\begin{equation}
\dot{V}+AU=f,\label{eq:dyn}
\end{equation}
where $\dot{V}$ stands for the time-derivative of $V$, $f$ is a
given forcing term and $A$ is a (usually unbounded) operator, which
in standard situations is skew-selfadjoint in a suitable Hilbert space
setting. Of course the latter equation is under-determined and a second
equation -- the so-called material law or constitutive relation --
linking $U$ and $V$ via a bounded linear operator $\mathcal{M}$
acting in space-time has to be supplied:
\begin{equation}
V=\mathcal{M}U.\label{mat-law}
\end{equation}
Substituting the material law into the first equation, we arrive at
\[
\left(\mathcal{M}U\right)^{\cdot}+AU=f.
\]
In \cite{1200.35050}, a well-posedness result for equations of this
form has been shown in a space-time Hilbert space setting. This result
has been generalized in various directions, \cite{Kalauch2011,Picard2012,PicTroWau2012c,RainerPicard2013,S.Trostorff,Trostorff2013,Waurick2013c}.
In these references many particular examples for the problem class
under consideration have been given. The approach of proving well-posedness
consists in establishing the time-derivative as a continuously invertible
normal operator and proving strict positive definiteness of both the
operator sum and its adjoint in a space-time Hilbert space setting.
On the one hand the concept of a solution is rather weak for a solution
only belongs to the domain of the closure of the respective operator
sum (akin of what is called a mild solution). On the other hand this
concept does not rely on the existence of a fundamental solution as
utilized in the semi-group approach, allowing for more general material
laws. 

Temporal processes are distinguish from purely spatial phenomena by
causality. In the Hilbert space context causality can conveniently
be analyzed with the help of the Paley-Wiener theorem (see e.g. \cite[Theorem 19.2]{Rudin})
employing the (vector-valued) Plancherel formula, which is only valid
if the functions under consideration attain values in a Hilbert space,
\cite{Kwapien}. 

In this article we discuss several models of elasticity (based on
considerations in \cite{0119.40302,0164.27507,zbMATH03490546,0604.73020,1104.74007,1136.74018})
and prove well-posedness of corresponding initial boundary value problems
by assuming positive definiteness conditions on the operators being
contained in the material law. The key is to show that the models
discussed fit into the general scheme of evolutionary equations. Moreover,
rather than proposing an energy from which the equations are derived,
based on the above structural observation we approach the equation
directly, the ``energy'' being essentially just the underlying norm.
We show, indeed, a particular conservation property for an abstract
problem class, suited to cover the models of elastic media considered.
Due to the power of the general machinery we are not merely reproducing
known results obtained with different approaches but generalizing
these models to general inhomogeneous anisotropic media, including
spatially non-local media, as considered in \cite{zbMATH01817639}.

Some of the models are linked by a formal simplification. This idea
has found its rigorous justification in terms of a so-called ``mother''
and ``descendant'' mechanism, \cite{Picard201354}. We will show
that several models for elastic solids can be regarded as linked by
this mechanism, which  may yield further insight in the interconnections
between these models.

In Section \ref{sec:Setting} we briefly recall the functional analytic
setting in which we want to establish a solution theory for our problem
class. In Section \ref{sec:Well-po_and_Mother} the abstract problem
class is described and the solution theory for this class is recalled.
In Subsection \ref{sub:WP:Linear-Evolutionary-Equations} we also
prove an energy balance equality by anticipating strategies developed
in \cite{PicTroWau2012,PicTroWau2012d}. We recall the results from
\cite{Picard201354} in Subsection \ref{sub:A-Mechanism-Deriving}
to explain the mechanism of deriving (well-posed) evolutionary equations
from given ones. In particular we define what it means for a model
to be a ``descendant'', ``relative'' or ``mother'' of another
one. Section \ref{sec:Cosserat-Elasticity} contains the description
and the well-posedness results for several equations modeling deformable
solids. Proving a well-posedness result for the equations for micromorphic
media in Subsection \ref{sub:Micromorphic-Media}, we show that the
equations of Cosserat elasticity (micropolar media) are a descendant
of the equations for micromorphic media in Subsection \ref{sub:Micropolar-or-Cosserat}.
Other descendants of the system for micromorphic media are discussed
in Subsection \ref{sub:Other-Descendants-of}. Before giving some
concluding remarks in Section \ref{sec:Conclusion}, we also describe
the so-called microstretch model from \cite{0718.73014,0164.27507},
which on the one hand fits into the general problem class but is on
the other hand not a descendant of the model for micromorphic media.

\section{A Hilbert Space Setting for a Class Evolutionary Problems\label{sec:Setting}}

The family of Hilbert spaces $\left(H_{\rho,0}\left(\mathbb{R},H\right)\right)_{\rho\in\mathbb{R}}$,
$H$ Hilbert space, with $H_{\rho,0}\left(\mathbb{R},H\right)\coloneqq L^{2}\left(\mathbb{R},\mu_{\rho},H\right)$,
where the measure $\mu_{\rho}$ is defined by $\mu_{\rho}\left(S\right)\coloneqq\int_{S}\exp\left(-2\rho t\right)\: dt$,
$S\subseteq\mathbb{R}$ a Borel set, $\rho\in\mathbb{R}$, provides
the desired Hilbert space setting for evolutionary problems (cf. \cite{pre05919306,Kalauch2011}).
The sign of $\rho$ is associated with the direction of causality,
where the positive sign is linked to forward causality. Since we have
a preference for forward causality, we shall usually assume that $\rho\in\rga0$.
By construction of these spaces we can establish
\begin{align*}
\exp\left(-\rho\mathbf{m}_{0}\right):H_{\rho,0}\left(\mathbb{R},H\right) & \to H_{0,0}\left(\mathbb{R},H\right)(=L^{2}\left(\mathbb{R},H\right))\\
\varphi & \mapsto\exp\left(-\rho\mathbf{m}_{0}\right)\varphi
\end{align*}
where $\left(\exp\left(-\rho\mathbf{m}_{0}\right)\varphi\right)\left(t\right)\coloneqq\exp\left(-\rho t\right)\varphi\left(t\right)$,
$t\in\mathbb{R}$, as a unitary mapping. We use $\mathbf{m}_{0}$
as a notation for the multiplication-by-argument operator corresponding
to the time parameter.

In this Hilbert space setting the time-derivative operation $\varphi\mapsto\dot{\varphi}$
generates a normal operator $\partial_{0,\rho}$ with%
\footnote{Recall that for normal operators $N$ in a Hilbert space $H$
\begin{align*}
\Re N & \coloneqq\frac{1}{2}\overline{\left(N+N^{*}\right)},\\
\Im N & \coloneqq\frac{1}{2\i}\overline{\left(N-N^{*}\right)}
\end{align*}
and
\[
N=\Re N+\i\Im N.
\]
It is
\[
D\left(N\right)=D\left(\Re N\right)\cap D\left(\Im N\right).
\]
}
\begin{align*}
\Re\partial_{0,\rho} & =\rho,\\
\Im\partial_{0,\rho} & =\frac{1}{\i}\left(\partial_{0,\rho}-\rho\right).
\end{align*}
The selfadjoint operator $\Im\partial_{0,\rho}$ is unitarily equivalent
to the differentiation operator $\frac{1}{\i}\partial_{0,0}$ in $L^{2}\left(\mathbb{R},H\right)=H_{0}\left(\mathbb{R},H\right)$
with domain $H^{1}(\mathbb{R},H)$ - the space of weakly differentiable
functions in $L^{2}(\mathbb{R},H)$ - via
\[
\Im\partial_{0,\rho}=\exp\left(\rho\mathbf{m}_{0}\right)\frac{1}{\i}\partial_{0,0}\exp\left(-\rho\mathbf{m}_{0}\right)
\]
and has the Fourier-Laplace transformation as spectral representation,
which is the unitary transformation
\[
\mathcal{L}_{\rho}\coloneqq\mathcal{F}\:\exp\left(-\rho\mathbf{m}_{0}\right):H_{\rho,0}(\mathbb{R},H)\to L^{2}(\mathbb{R},H),
\]
where $\mathcal{F}:L^{2}(\mathbb{R},H)\to L^{2}(\mathbb{R},H)$ is
the Fourier transformation. Indeed, this follows from the well-known
fact that $\mathcal{F}$ is unitary in $L^{2}\left(\mathbb{R},H\right)$
and a spectral representation for $-\i$ times the derivative in $L^{2}\left(\mathbb{R},H\right)$.
Recall that for continuous functions $\varphi$ with compact support
in $\mathbb{R}$ we have
\[
\left(\mathcal{F}\varphi\right)\left(s\right)\coloneqq\frac{1}{\sqrt{2\pi}}\int_{\mathbb{R}}\exp\left(-\i st\right)\:\varphi\left(t\right)\: dt\quad(s\in\mathbb{R}).
\]
In particular, we have 
\begin{align*}
\Im\partial_{0,\rho} & =\mathcal{L}_{\rho}^{*}\mathbf{m}_{0}\mathcal{L}_{\rho}
\end{align*}
and thus 
\[
\partial_{0,\rho}=\mathcal{L}_{\rho}^{*}\left(\i\mathbf{m}_{0}+\rho\right)\mathcal{L}_{\rho}.
\]

It is crucial to note that for $\rho\not=0$ we have that $\partial_{0,\rho}$
has a bounded inverse. If $\rho>0$ we find from $\Re\partial_{0,\rho}=\rho$
that
\begin{equation}
\left\Vert \partial_{0,\rho}^{-1}\right\Vert _{\rho,0}=\left|\partial_{0,\rho}^{-1}\right|_{\rho,\mathrm{Lip}}=\frac{1}{\rho},\label{eq:time-integral-norm}
\end{equation}
where $\left|\;\cdot\:\right|_{\rho,\mathrm{Lip}}$ denotes the semi-norm
given by associating the smallest Lipschitz constant with a Lipschitz
continuous mapping in $H_{\rho,0}\left(\mathbb{R},H\right)$, which
is for linear operators actually a norm coinciding with the operator
norm denoted by $\left\Vert \:\cdot\;\right\Vert _{\rho,0}$, $\rho\in\rga0$.
For continuous functions $\varphi$ with compact support we find
\begin{equation}
\left(\partial_{0,\rho}^{-1}\varphi\right)\left(t\right)=\int_{-\infty}^{t}\varphi\left(s\right)\: ds,\: t\in\mathbb{R},\rho\in\oi{0}{\infty},\label{eq:integral}
\end{equation}

which shows the causality of $\partial_{0,\rho}^{-1}$ for $\rho>0$.%
\footnote{If $\rho<0$ the operator $\partial_{0,\rho}$ is also boundedly invertible
and its inverse is given by 
\[
\left(\partial_{0,\rho}^{-1}\varphi\right)(t)=-\intop_{t}^{\infty}\varphi(s)\, ds\quad(t\in\mathbb{R})
\]
for all $\varphi\in\interior C_{\infty}(\mathbb{R},H)$, the space
of indefinitely differentiable, compactly supported $H$-valued functions.
Thus, $\rho<0$ corresponds to the backward causal (or anticausal)
case.%
} Since it is usually clear from the context which $\rho$ has been
chosen, we shall, as is customary, drop the index $\rho$ from the
notation for the time derivative and simply use $\partial_{0}$ instead
of $\partial_{0,\rho}$.

\section{A Well-Posed Problem Class of Autonomous Evolutionary Equations\label{sec:Well-po_and_Mother}}

\subsection{Linear Evolutionary Equations Describing Lossless Wave Propagation
Phenomena\label{sub:WP:Linear-Evolutionary-Equations}}

We recall from \cite{1200.35050} (and the concluding chapter of \cite{pre05919306})
that the common form of standard initial boundary value problems of
mathematical physics is given by
\begin{equation}
\overline{\left(\partial_{0}M\left(\partial_{0}^{-1}\right)+A\right)}U=f,\label{eq:evo2}
\end{equation}
where for the rest of the paper we restrict our attention to the simple
case when $A$ is the canonical skew-selfadjoint extension to $H_{\rho,0}\left(\mathbb{R},H\right)$
of an skew-selfadjoint operator%
\footnote{Since this appears to be a matter of debate, we emphasize here that,
as one commonly considers ``adjoint'' as a binary concept and self-adjoint
as a unary notion, we shall treat the ``skew'' situation analogously.
So, we shall say that $A$ is skew-adjoint to $B$ (or is the skew-adjoint
of $B$) if
\[
A=-B^{*}
\]
and we shall call $A$ skew-selfadjoint if it is its own skew-adjoint,
i.e. 
\[
A=-A^{*}.
\]
} in $H$ and the so-called material law operator $M\left(\partial_{0}^{-1}\right)$
is assumed to be of the simple polynomial form
\begin{equation}
M\left(\partial_{0}^{-1}\right)\coloneqq M_{0}+\partial_{0}^{-1}M_{1}+\partial_{0}^{-2}M_{2}\label{eq:material}
\end{equation}
with $M_{0},M_{1},M_{2}\in L(H_{\rho,0}(\mathbb{R},H))$ denoting
the canonical extensions of bounded linear operators on $H$ such
that
\begin{equation}
\begin{array}{l}
M_{0}\mbox{ is selfadjoint and strictly positive definite,}\\
M_{1}\mbox{ is skew-selfadjoint and }\\
M_{2}\mbox{ is selfadjoint.}
\end{array}\label{eq:pos-def}
\end{equation}
 As a simplification of notation we shall omit henceforth the closure
bar and simply write 
\begin{equation}
\left(\partial_{0}M_{0}+M_{1}+\partial_{0}^{-1}M_{2}+A\right)U=f\label{eq:evo3}
\end{equation}
to describe problems of the form (\ref{eq:evo2}).%
\footnote{This can be justified rigorously by employing the theory of Sobolev-lattices
(see \cite{pre05919306}).%
} 

We have singled out this rather special case%
\footnote{It should, however, be noted that by considering operator coefficients
we include painlessly the case of spatially non-local media, compare
e.g. \cite{zbMATH01817639}, in a unified setting.%
} with $M_{0},M_{1},M_{2}$ satisfying (\ref{eq:pos-def}) and $A$
being skew-selfadjoint, since it describes wave phenomena with a conservation
property, which will be at the heart of this paper. For sake of reference
we record the following well-posedness result, adapted to this simplified
situation.
\begin{thm}[{\cite[Solution Theory]{1200.35050}}]
\label{linear-evolutionary-solution-theory}Let (\ref{eq:pos-def})
hold. Then, for all sufficiently large%
\footnote{If $M_{2}$ is non-negative we can choose \emph{any }$\rho>0$.%
} $\rho\in\rga0$ and every $f\in H_{\rho,0}\left(\mathbb{R},H\right)$
there is a unique solution $U\in H_{\rho,0}\left(\mathbb{R},H\right)$
solving problem (\ref{eq:evo2}). Moreover, $\left(\partial_{0}M_{0}+M_{1}+\partial_{0}^{-1}M_{2}+A\right)^{-1}$
is a causal, continuous operator, where causality means that if $f=0$
on $\oi{-\infty}a$ then so is $U=\left(\partial_{0}M_{0}+M_{1}+\partial_{0}^{-1}M_{2}+A\right)^{-1}f$
, $a\in\mathbb{R}$.\end{thm}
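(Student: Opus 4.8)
The plan is to show that the full operator
\[
B\coloneqq\partial_{0}M_{0}+M_{1}+\partial_{0}^{-1}M_{2}+A,
\]
together with its adjoint, is strictly positive definite once $\rho$ is large enough, and then to invoke the standard abstract criterion that a densely defined operator bounded below whose adjoint is also bounded below is continuously invertible onto the whole space. I would first work on a convenient core -- functions smooth with compact support in time and lying in $D(A)$ -- and estimate $\Re\langle BU,U\rangle_{\rho,0}$. Since $M_{0},M_{1},M_{2}$ are canonical extensions of operators acting only in the spatial variable $H$, they commute with $\partial_{0}$ and $\partial_{0}^{-1}$. Writing $M_{0}^{1/2}$ (which exists and is boundedly invertible because $M_{0}$ is selfadjoint and strictly positive by (\ref{eq:pos-def})) and using $\Re\partial_{0}=\rho$, I obtain $\Re\langle\partial_{0}M_{0}U,U\rangle=\rho\lVert M_{0}^{1/2}U\rVert^{2}\geq\rho c_{0}\lVert U\rVert^{2}$, where $c_{0}>0$ is the lower bound of $M_{0}$. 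The skew-selfadjoint contributions vanish: $\Re\langle M_{1}U,U\rangle=0$ and $\Re\langle AU,U\rangle=0$. The remaining term is controlled by $\lVert\partial_{0}^{-1}\rVert_{\rho,0}=1/\rho$ from (\ref{eq:time-integral-norm}), giving $\lvert\Re\langle\partial_{0}^{-1}M_{2}U,U\rangle\rvert\leq\frac{\lVert M_{2}\rVert}{\rho}\lVert U\rVert^{2}$. Combining,
\[
\Re\langle BU,U\rangle_{\rho,0}\geq\left(\rho c_{0}-\frac{\lVert M_{2}\rVert}{\rho}\right)\lVert U\rVert^{2},
\]
which is bounded below by a positive multiple of $\lVert U\rVert^{2}$ for all sufficiently large $\rho$. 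If $M_{2}$ is non-negative I would instead note that the Fourier--Laplace symbol of $\partial_{0}^{-1}$ is $(\rho+\i\mathbf{m}_{0})^{-1}$, whence $\Re\partial_{0}^{-1}\geq0$ and $\Re\langle\partial_{0}^{-1}M_{2}U,U\rangle=\Re\langle\partial_{0}^{-1}M_{2}^{1/2}U,M_{2}^{1/2}U\rangle\geq0$; this term may then be dropped and any $\rho>0$ suffices, matching the footnote.

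Next I would run the identical estimate on the adjoint. Formally $B^{*}=M_{0}\partial_{0}^{*}-M_{1}+M_{2}(\partial_{0}^{*})^{-1}-A$, and since $\partial_{0}$ is normal one has $\partial_{0}^{*}=2\rho-\partial_{0}$, so $\Re\partial_{0}^{*}=\rho$ as well. The same computation then yields $\Re\langle B^{*}V,V\rangle_{\rho,0}\geq(\rho c_{0}-\lVert M_{2}\rVert/\rho)\lVert V\rVert^{2}$. By Cauchy--Schwarz both lower bounds give $\lVert\overline{B}U\rVert\geq c\lVert U\rVert$ and $\lVert\overline{B}^{*}V\rVert\geq c\lVert V\rVert$ with $c>0$. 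The first estimate makes $\overline{B}$ injective with closed range; the second makes $\overline{B}^{*}$ injective, so $(\operatorname{ran}\overline{B})^{\perp}=\ker\overline{B}^{*}=\{0\}$ and the range is dense. Closed plus dense range is all of $H_{\rho,0}(\mathbb{R},H)$, so $\overline{B}$ is bijective with $\lVert\overline{B}^{-1}\rVert_{\rho,0}\leq 1/c$, which is exactly existence, uniqueness and continuity of the solution operator.

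Finally, for causality of $\overline{B}^{-1}$ I would exploit the causality of $\partial_{0}^{-1}$ for $\rho>0$ recorded in (\ref{eq:integral}): factoring out $\partial_{0}M_{0}$ and expanding $\overline{B}^{-1}$ in a Neumann-type series built from the causal operator $\partial_{0}^{-1}$ and the (pointwise-in-time, hence causal) bounded material operators shows that the solution operator inherits causality; alternatively one checks that $\overline{B}^{-1}$ is a bounded, $\rho$-independent family commuting with time translations and applies the Paley--Wiener characterization. The main obstacle I anticipate is not the positivity estimate itself but the careful bookkeeping of domains and the rigorous justification of the closure and of the adjoint computation for the unbounded sum $\partial_{0}M_{0}+A$ -- that is, legitimizing the omission of the closure bar agreed upon in passing from (\ref{eq:evo2}) to (\ref{eq:evo3}) -- which is precisely where the Sobolev-lattice machinery cited in the text carries the weight.
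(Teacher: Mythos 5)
Your proposal reconstructs precisely the argument that the paper itself leans on: the paper offers no proof of Theorem \ref{linear-evolutionary-solution-theory}, quoting it from \cite{1200.35050}, and its introduction summarizes that proof as establishing $\partial_{0}$ as a boundedly invertible normal operator and ``proving strict positive definiteness of both the operator sum and its adjoint in a space-time Hilbert space setting'', with causality analyzed via the Paley--Wiener theorem. Your positivity estimate $\Re\langle BU,U\rangle_{\rho,0}\geq\bigl(\rho c_{0}-\lVert M_{2}\rVert/\rho\bigr)\lVert U\rVert^{2}$, the adjoint computation via $\partial_{0}^{*}=2\rho-\partial_{0}$, the closed-range/dense-range conclusion, and the footnote case $M_{2}\geq0$ via $\Re\partial_{0}^{-1}\geq0$ are all correct and are exactly the cited route; your closing caveat about domains and the closure bar is also well placed, since that is where the Sobolev-lattice machinery of \cite{pre05919306} does its work.

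The one genuine defect is the first branch of your causality argument. A Neumann-type series for $\overline{B}^{-1}$ obtained by factoring out $\partial_{0}M_{0}$ would have to sum powers of $\partial_{0}^{-1}M_{0}^{-1}\left(M_{1}+\partial_{0}^{-1}M_{2}+A\right)$, and since $A$ is unbounded this is not a small-norm operator---indeed not a bounded operator at all---no matter how large $\rho$ is; the smallness $\lVert\partial_{0}^{-1}\rVert_{\rho,0}=1/\rho$ only tames the bounded material-law part, which is why this trick works for ordinary or delay equations but not here. Your fallback is also stated too weakly: commuting with time translations alone does not imply causality (the anticausal integral $\varphi\mapsto-\int_{t}^{\infty}\varphi$ is translation invariant too). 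What actually closes the gap, and what \cite{1200.35050} does, is the precise form of your second alternative: under the Fourier--Laplace transform $\mathcal{L}_{\rho}$ the solution operator acts as multiplication by $z\mapsto\left(zM_{0}+M_{1}+z^{-1}M_{2}+A\right)^{-1}$, which by your uniform positivity estimate is analytic and uniformly bounded on the half-plane $\Re z\geq\rho_{0}$; equivalently, the solution operators for different $\rho\geq\rho_{0}$ are consistent and uniformly bounded, and the vector-valued Paley--Wiener theorem (\cite[Theorem 19.2]{Rudin}, used exactly as the paper's introduction indicates) then yields causality. A purely time-domain alternative common in this literature is to strengthen the positivity estimate to cut-offs, $\Re\left\langle BU|\chi_{(-\infty,a]}\left(\mathbf{m}_{0}\right)U\right\rangle _{\rho,0}\geq c\left\lVert \chi_{(-\infty,a]}\left(\mathbf{m}_{0}\right)U\right\rVert _{\rho,0}^{2}$, which encodes causality directly; either repair fits seamlessly into your proof, but the Neumann series should be dropped.
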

\begin{rem}
\label{rem:regularity}Using the fact that the solution operator $(\partial_{0}M_{0}+M_{1}+\partial_{0}^{-1}M_{2}+A)^{-1}$
commutes with $\partial_{0}$, we get that the solution $U$ is as
regular as the given source term $f$, i.e. if $f\in D(\partial_{0}^{k})$
for some $k\in\mathbb{N}$ then so is $U$.
\end{rem}
It is clear that in the even simpler case with $M_{2}=0$, the fundamental
solution associated with (\ref{eq:evo2}) can be given in terms of
a function of the skew-selfadjoint operator $M_{0}^{-1/2}\left(M_{1}+A\right)M_{0}^{-1/2}$
as
\[
\left(\chi_{_{\lci0\infty}}\left(t\right)\: M_{0}^{-1/2}\exp\left(-t\: M_{0}^{-1/2}\left(M_{1}+A\right)M_{0}^{-1/2}\right)M_{0}^{1/2}\right)_{t\in\mathbb{R}}.
\]
For suitable $f$ the solution can of course be written as a convolution
integral of $f$ with this fundamental solution (as done in semi-group
theory). Although this may be interesting to note, we shall, however,
have no reason to use this fact in the following. As we shall see,
most of the applications we shall investigate have indeed $M_{2}=0$.
The only exception is the microstretch model discussed in Section
\ref{sec:Microstretch-Models}, which has indeed a more elaborate
conserved norm.

That the material law operator (\ref{eq:material}) describes a loss-less
situation is a consequence of $M_{1}$ being skew-selfadjoint and
$M_{2}$ being selfadjoint.
\begin{thm}
\label{thm:energy balance}Let $U,f\in H_{\rho,0}\left(\mathbb{R},H\right)$
such that 
\[
\left(\partial_{0}M_{0}+M_{1}+\partial_{0}^{-1}M_{2}+A\right)U=f.
\]
Then
\begin{multline*}
\frac{1}{2}\left\langle U|M_{0}U\right\rangle _{0}\left(b\right)+\frac{1}{2}\left\langle \partial_{0}^{-1}U|M_{2}\partial_{0}^{-1}U\right\rangle _{0}\left(b\right)\\
=\frac{1}{2}\left\langle U|M_{0}U\right\rangle _{0}\left(a\right)+\frac{1}{2}\left\langle \partial_{0}^{-1}U|M_{2}\partial_{0}^{-1}U\right\rangle _{0}\left(a\right)+\int_{a}^{b}\Re\left\langle U|f\right\rangle _{0}
\end{multline*}
for almost every $a,b\in\mathbb{R}$.\end{thm}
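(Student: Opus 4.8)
The plan is to prove the identity first in differentiated form at the pointwise (in time) level for a regular approximation of $U$, and then to recover the stated integral balance by the fundamental theorem of calculus together with a passage to the limit. The three ingredients are: the algebraic cancellations forced by skew-selfadjointness of $M_1$ and $A$ and selfadjointness of $M_0$ and $M_2$; a core argument that legitimizes the pointwise computation; and an $L^1_{\mathrm{loc}}$-limit that transfers the balance to the genuine (mild) solution.

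First I would reduce to regular $U$. Since $\overline{\partial_0 M_0(\partial_0^{-1})+A}$ is closed and $\partial_0 M_0(\partial_0^{-1})+A$ is defined on the core $D(\partial_0)\cap D(A)$, the hypothesis $\overline{(\,\cdots)}U=f$ furnishes a sequence $(U_n)$ in $D(\partial_0)\cap D(A)$ with $U_n\to U$ and $f_n\coloneqq(\partial_0 M_0(\partial_0^{-1})+A)U_n\to f$ in $H_{\rho,0}(\mathbb{R},H)$. Each $U_n\in D(\partial_0)$ has a continuous representative (the one-dimensional Sobolev embedding, valid also in the weighted space), and so does $\partial_0^{-1}U_n\in D(\partial_0)$; moreover $U_n(t)$ lies in the domain of the spatial $A$ for almost every $t$. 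Hence the material law and $A$ act pointwise in time, giving for almost every $t$ the relation $M_0 U_n'(t)+M_1 U_n(t)+M_2(\partial_0^{-1}U_n)(t)+AU_n(t)=f_n(t)$, where $U_n'=\partial_0 U_n$ and $(\partial_0^{-1}U_n)'=U_n$.

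Next I would carry out the energy computation for each $U_n$. Pairing this pointwise equation with $U_n(t)$ in $H$ and taking real parts, the contributions $\Re\langle U_n(t), M_1 U_n(t)\rangle$ and $\Re\langle U_n(t), AU_n(t)\rangle$ vanish by skew-selfadjointness, while selfadjointness of $M_0$ and $M_2$ yields $\Re\langle U_n'(t), M_0 U_n(t)\rangle=\tfrac12\frac{d}{dt}\langle U_n(t), M_0 U_n(t)\rangle$ and $\Re\langle U_n(t), M_2(\partial_0^{-1}U_n)(t)\rangle=\tfrac12\frac{d}{dt}\langle(\partial_0^{-1}U_n)(t), M_2(\partial_0^{-1}U_n)(t)\rangle$. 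Writing $e_n$ for the sum of these two quadratic terms, $e_n$ is absolutely continuous (a product of $H^1$-functions) with $e_n'(t)=\Re\langle U_n(t), f_n(t)\rangle$ a.e., so the fundamental theorem of calculus gives $e_n(b)-e_n(a)=\int_a^b\Re\langle U_n, f_n\rangle_0$ for all $a,b$.

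Finally I would pass to the limit. From $U_n\to U$ and $\partial_0^{-1}U_n\to\partial_0^{-1}U$ in $H_{\rho,0}$ (the latter by boundedness of $\partial_0^{-1}$) the quadratic terms converge in $L^1_{\mathrm{loc}}$, so $e_n\to e$ in $L^1_{\mathrm{loc}}$ and, along a subsequence, $e_n(t)\to e(t)$ for almost every $t$; likewise $\Re\langle U_n, f_n\rangle_0\to\Re\langle U, f\rangle_0$ in $L^1_{\mathrm{loc}}$, whence the right-hand integrals converge for every $a,b$. Selecting $a,b$ in the full-measure set on which the boundary values converge then yields the asserted equality for almost all $a,b$. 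I expect the main obstacle to be exactly this last step: the solution $U$ is only a mild solution lying in the domain of the closure and need not belong to $D(\partial_0)\cap D(A)$, so the pointwise energy identity is available only after regularization, and the pointwise boundary terms $e(a),e(b)$ are recovered only almost everywhere — which is precisely why the conclusion is stated for almost every $a,b$.
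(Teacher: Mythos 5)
Your proof is correct, and it follows the paper's overall skeleton (regularize, derive the pointwise energy identity from the cancellations due to skew-selfadjointness of $M_1$ and $A$, integrate, pass to an almost everywhere convergent subsequence), but it deviates in two genuine ways. First, the regularization: the paper approximates the \emph{data}, taking $f_n\in D(\partial_0)$ with $f_n\to f$ and obtaining regular solutions $U_n=\left(\partial_0 M_0+M_1+\partial_0^{-1}M_2+A\right)^{-1}f_n\in D(\partial_0)$ via the continuity of the solution operator and its commutation with $\partial_0$ (Remark \ref{rem:regularity}); you instead approximate the \emph{solution} directly, using that $D(\partial_0)\cap D(A)$ is by definition a core for the closed operator, so that $U_n\to U$ and $\left(\partial_0 M_0+M_1+\partial_0^{-1}M_2+A\right)U_n=f_n\to f$ in graph norm. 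Your route is more self-contained and slightly more general: it never invokes Theorem \ref{linear-evolutionary-solution-theory}, so the balance holds for any $U$ in the domain of the closure satisfying the equation, without strict positive definiteness of $M_0$ or largeness of $\rho$, whereas the paper's proof implicitly presupposes well-posedness through its use of the solution operator. Second, the $M_2$ term: the paper invokes the spectral theorem to split $M_2=M_{2,+}-M_{2,-}$ into nonnegative parts so that square roots exist and the quadratic identity $\partial_0\left\langle W|W\right\rangle=2\Re\left\langle W|\partial_0 W\right\rangle$ applies with $W=\partial_0^{-1}M_{2,\pm}^{1/2}U$; you avoid this decomposition entirely by differentiating the bilinear pairing $t\mapsto\left\langle \left(\partial_0^{-1}U_n\right)(t)|M_2\left(\partial_0^{-1}U_n\right)(t)\right\rangle$ with the product rule for locally $H^1$ functions, which is more elementary and lets selfadjointness of $M_2$ do exactly the work it should. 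Your final limit passage ($L^{1}_{\mathrm{loc}}$ convergence of the quadratic terms and of $\Re\left\langle U_n|f_n\right\rangle$, then an almost everywhere convergent subsequence, which is precisely where the restriction to almost every $a,b$ enters) coincides with the paper's.
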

\begin{proof}
By the spectral theorem for bounded selfadjoint operators, we can
assume that $M_{2}$ is given as a multiplication operator with a
real-valued bounded function $V$ defined on some measure space. The
operators of multiplying with the positive part and negative part
of $V$ will respectively be denoted by $M_{2,+}$ and $M_{2,-}$.
It follows that $M_{2}=M_{2,+}-M_{2,-}$. 

We begin to prove the statement for $f\in D(\partial_{0}).$ Then
$U\in D(\partial_{0})$ according to Remark \ref{rem:regularity},
which in particular implies that $U$ is continuous by Sobolev's embedding
theorem (see \cite[Lemma 3.1.59]{pre05919306}). Furthermore, since
\[
AU=f-\partial_{0}M_{0}U-M_{1}U-\partial_{0}^{-1}M_{2}U\in H_{\rho,0}(\mathbb{R};H)
\]
we deduce that $U$ attains values in the domain of $A$. Thus, for
$a,b\in\mathbb{R}$ we compute 
\begin{align*}
 & \intop_{a}^{b}\Re\langle U(t)|f(t)\rangle\, dt\\
 & =\intop_{a}^{b}\Re\langle U(t)|\partial_{0}M_{0}U(t)\rangle\, dt+\intop_{a}^{b}\Re\langle U(t)|M_{1}U(t)\rangle\, dt+\intop_{a}^{b}\Re\langle U(t)|\partial_{0}^{-1}M_{2}U(t)\rangle\, dt\\
 & \quad+\intop_{a}^{b}\Re\langle U(t)|AU(t)\rangle\, dt\\
 & =\intop_{a}^{b}\Re\langle U(t)|\partial_{0}M_{0}U(t)\rangle\, dt+\intop_{a}^{b}\Re\langle U(t)|\partial_{0}^{-1}M_{2}U(t)\rangle\, dt,\\
 & =\intop_{a}^{b}\Re\langle U(t)|\partial_{0}M_{0}U(t)\rangle\, dt+\intop_{a}^{b}\Re\langle U(t)|\partial_{0}^{-1}M_{2,+}U(t)\rangle\, dt-\intop_{a}^{b}\Re\langle U(t)|\partial_{0}^{-1}M_{2,-}U(t)\rangle\, dt
\end{align*}
since $M_{1}$ and $A$ are skew-selfadjoint. Moreover, using $\partial_{0}\langle W|W\rangle(t)=2\Re\langle W(t)|\partial_{0}W(t)\rangle$
for $W\in D(\partial_{0})$ we get that 
\begin{align*}
 & \intop_{a}^{b}\Re\langle U(t)|f(t)\rangle\, dt\\
 & =\intop_{a}^{b}\Re\langle M_{0}^{\frac{1}{2}}U(t)|\partial_{0}M_{0}^{\frac{1}{2}}U(t)\rangle\, dt+\intop_{a}^{b}\Re\langle M_{2,+}^{\frac{1}{2}}U(t)|\partial_{0}^{-1}M_{2,+}^{\frac{1}{2}}U(t)\rangle\, dt\\
 & \quad-\intop_{a}^{b}\Re\langle M_{2,-}^{\frac{1}{2}}U(t)|\partial_{0}^{-1}M_{2,-}^{\frac{1}{2}}U(t)\rangle\, dt\\
 & =\frac{1}{2}\intop_{a}^{b}\partial_{0}\langle M_{0}^{\frac{1}{2}}U|M_{0}^{\frac{1}{2}}U\rangle(t)\, dt+\frac{1}{2}\intop_{a}^{b}\partial_{0}\langle\partial_{0}^{-1}M_{2,+}^{\frac{1}{2}}U|\partial_{0}^{-1}M_{2,+}^{\frac{1}{2}}U\rangle(t)\, dt\\
 & \quad-\frac{1}{2}\intop_{a}^{b}\partial_{0}\langle\partial_{0}^{-1}M_{2,-}^{\frac{1}{2}}U|\partial_{0}^{-1}M_{2,-}^{\frac{1}{2}}U\rangle(t)\, dt\\
 & =\frac{1}{2}\left(\langle M_{0}^{\frac{1}{2}}U|M_{0}^{\frac{1}{2}}U\rangle(b)-\langle M_{0}^{\frac{1}{2}}U|M_{0}^{\frac{1}{2}}U\rangle(a)\right)\\
 & \quad+\frac{1}{2}\left(\langle\partial_{0}^{-1}M_{2,+}^{\frac{1}{2}}U|\partial_{0}^{-1}M_{2,+}^{\frac{1}{2}}U\rangle(b)-\langle\partial_{0}^{-1}M_{2,+}^{\frac{1}{2}}U|\partial_{0}^{-1}M_{2,+}^{\frac{1}{2}}U\rangle(a)\right)\\
 & \quad-\frac{1}{2}\left(\langle\partial_{0}^{-1}M_{2,-}^{\frac{1}{2}}U|\partial_{0}^{-1}M_{2,-}^{\frac{1}{2}}U\rangle(b)-\langle\partial_{0}^{-1}M_{2,-}^{\frac{1}{2}}U|\partial_{0}^{-1}M_{2,-}^{\frac{1}{2}}U\rangle(a)\right)
\end{align*}
which yields the asserted energy balance equation. If $f\in H_{\rho,0}(\mathbb{R},H)$
we can approximate $f$ by a sequence $(f_{n})_{n\in\mathbb{N}}$
of functions in $D(\partial_{0}).$ Since the solution operator is
continuous, we obtain 
\[
U_{n}\coloneqq\left(\partial_{0}M_{0}+M_{1}+\partial_{0}^{-1}M_{2}+A\right)^{-1}f_{n}\to U.
\]
 According to what we have shown above, $U_{n}\in D(\partial_{0})$
satisfies the energy balance equation 
\begin{multline*}
\frac{1}{2}\left\langle U_{n}|M_{0}U_{n}\right\rangle _{0}\left(b\right)+\frac{1}{2}\left\langle \partial_{0}^{-1}U_{n}|M_{2}\partial_{0}^{-1}U_{n}\right\rangle _{0}\left(b\right)\\
=\frac{1}{2}\left\langle U_{n}|M_{0}U_{n}\right\rangle _{0}\left(a\right)+\frac{1}{2}\left\langle \partial_{0}^{-1}U_{n}|M_{2}\partial_{0}^{-1}U_{n}\right\rangle _{0}\left(a\right)+\int_{a}^{b}\Re\left\langle U_{n}|f_{n}\right\rangle _{0}.
\end{multline*}
Passing to an almost everywhere convergent subsequence, we derive
the energy balance for $U$ by letting $n$ tend to infinity in the
previous equality.\end{proof}
\begin{rem}
\label{rem:conservation_and_blocks} (a) Note that, if $f=0$ on an
interval $I\subseteq\mathbb{R}$ in the situation of the previous
theorem then 
\[
t\mapsto\frac{1}{2}\left\langle U|M_{0}U\right\rangle _{0}\left(t\right)+\frac{1}{2}\left\langle \partial_{0}^{-1}U|M_{2}\partial_{0}^{-1}U\right\rangle _{0}\left(t\right)
\]
is a constant function in $L^{\infty,\mathrm{loc}}\left(I\right)$.
For $M_{1}=M_{2}=0$ this results in $U=\left(\partial_{0}M_{0}+A\right)^{-1}f,$
and that 
\[
t\mapsto\frac{1}{2}\left\langle U|M_{0}U\right\rangle _{0}\left(t\right)=\frac{1}{2}\left|\sqrt{M_{0}}U\right|_{0}^{2}\left(t\right)
\]
is a constant function in $L^{\infty,\mathrm{loc}}\left(I\right)$. 

Using that $\sqrt{M_{0}}U$ satisfies the equation 
\[
\left(\partial_{0}+\sqrt{M_{0}}^{-1}A\sqrt{M_{0}}^{-1}\right)\sqrt{M_{0}}U=\sqrt{M_{0}}^{-1}f,
\]
we get that 
\[
\left(\partial_{0}+\sqrt{M_{0}}^{-1}A\sqrt{M_{0}}^{-1}\right)\left(\left(\sqrt{M_{0}}^{-1}A\sqrt{M_{0}}^{-1}\right)\sqrt{M_{0}}U\right)=\sqrt{M_{0}}^{-1}AM_{0}^{-1}f,
\]
provided that $U$ and $f$ are regular enough. Using that $\sqrt{M_{0}}^{-1}A\sqrt{M_{0}}^{-1}$
is again skew-selfadjoint, we obtain by Theorem \ref{thm:energy balance}
the preservation of the norm%
\footnote{Indeed, any Borel function $f$ of the skew-selfadjoint operator $\tilde{A}\coloneqq\sqrt{M_{0}}^{-1}A\sqrt{M_{0}}^{-1}$
in the sense of the associated function calculus would yield
\[
t\mapsto\frac{1}{2}\left|f\left(\tilde{A}\right)\sqrt{M_{0}}U\right|_{0}^{2}\left(t\right)
\]
 as a preserved quantity on $I$, provided that $\sqrt{M_{0}}U$ is
in the domain of $f\left(\tilde{A}\right)$.%
} 
\[
\frac{1}{2}\left|\left(\sqrt{M_{0}}^{-1}A\sqrt{M_{0}}^{-1}\right)\sqrt{M_{0}}U\right|_{0}^{2}
\]
on the interval $I$.

(b) In the situation of (a), we assume for sake of simplicity -- in
particular with respect to applications -- that $A$ is of the operator
block matrix form
\begin{equation}
A=\left(\begin{array}{cc}
0 & -G^{*}\\
G & 0
\end{array}\right),\label{eq:Hamiltonian}
\end{equation}
where $G:D\left(G\right)\subseteq H_{0}\to H_{1}$ is a densely defined,
closed, linear operator between two Hilbert spaces $H_{0},H_{1}$,
so that skew-selfadjointness of $A$ in $H\coloneqq H_{0}\oplus H_{1}$
is evident. Assuming in addition that $M_{0}$ can be written as a
block diagonal operator $\left(\begin{array}{cc}
M_{00} & 0\\
0 & M_{11}
\end{array}\right)$ with respect to the block structure induced by $A$, we obtain that
the function 
\begin{align*}
t & \mapsto\frac{1}{2}\left|\left(\begin{array}{cc}
0 & -\sqrt{M_{00}}^{-1}G^{*}\sqrt{M_{11}}^{-1}\\
\sqrt{M_{11}}^{-1}G\sqrt{M_{00}}^{-1} & 0
\end{array}\right)\left(\begin{array}{c}
\sqrt{M_{00}}U_{0}\\
\sqrt{M_{11}}U_{1}
\end{array}\right)\right|_{0}^{2}\left(t\right),\\
 & \mapsto\frac{1}{2}\left\langle \left.\left(\begin{array}{c}
-\sqrt{M_{00}}^{-1}G^{*}U_{1}\\
\sqrt{M_{11}}^{-1}GU_{0}
\end{array}\right)\right|\left(\begin{array}{c}
-\sqrt{M_{00}}^{-1}G^{*}U_{1}\\
\sqrt{M_{11}}^{-1}GU_{0}
\end{array}\right)\right\rangle _{0}\left(t\right),\\
 & \mapsto\frac{1}{2}\left\langle G^{*}U_{1}|M_{00}^{-1}G^{*}U_{1}\right\rangle _{0}^{2}\left(t\right)+\frac{1}{2}\left\langle GU_{0}|M_{11}^{-1}GU_{0}\right\rangle _{0}\left(t\right)
\end{align*}
is constant on $I$. Since $U=\left(\begin{array}{c}
U_{0}\\
U_{1}
\end{array}\right)$ satisfies 
\begin{align*}
\partial_{0}M_{00}U_{0}-G^{\ast}U_{1} & =0,\\
\partial_{0}M_{11}U_{1}+GU_{0} & =0,
\end{align*}
on $I$, this yields
\begin{align*}
t & \mapsto\frac{1}{2}\left\langle \partial_{0}M_{00}U_{0}|M_{00}^{-1}\partial_{0}M_{00}U_{0}\right\rangle _{0}\left(t\right)+\frac{1}{2}\left\langle GU_{0}|M_{11}^{-1}GU_{0}\right\rangle _{0}\left(t\right),\\
 & \mapsto\frac{1}{2}\left\langle \partial_{0}U_{0}|M_{00}\partial_{0}U_{0}\right\rangle _{0}\left(t\right)+\frac{1}{2}\left\langle GU_{0}|M_{11}^{-1}GU_{0}\right\rangle _{0}\left(t\right),
\end{align*}
as a preserved quantity on the interval $I$, an expression, which
-- with the notable exception of Maxwell's system of electrodynamics
-- is frequently used to state conservation of energy, rather than
the simple norm preservation.
\end{rem}

\subsection{A Mechanism Deriving Evolutionary Equations from Given Ones\label{sub:A-Mechanism-Deriving}}

We start this section by giving a typical example for $A$ admitting
the block structure as stated in Remark \ref{rem:conservation_and_blocks}:
\begin{example}
\label{example:covaraint derivative}In \cite{Picard201354} a particular
case for the skew-selfadjoint operator $A=\left(\begin{array}{cc}
0 & -G^{*}\\
G & 0
\end{array}\right)$ has been considered, which, if Dirichlet type boundary conditions
are to be imposed, leads to 
\[
G=\interior\nabla
\]
on $\bigoplus_{q\in\mathbb{N}}L^{2,q}\left(\Omega\right)$, where
$L^{2,q}\left(\Omega\right)$ denotes the Hilbert space of covariant
tensors of degree $q\in\mathbb{N}$, defined on an arbitrary non-empty
open subset $\Omega$ of a Riemannian submanifold $(M,g)$. The operator
$\interior\nabla$ is defined as the closure of the covariant derivative
acting on smooth covariant tensors with compact support in $\Omega$.
The inner product of $L^{2,q}\left(\Omega\right)$ is given by
\[
\left(\phi,\psi\right)\mapsto\int_{\Omega}\left\langle \phi|\psi\right\rangle _{\otimes q}\: dV,
\]
where $V$ is the volume element of the Riemannian manifold $M$ and
$\left\langle \:\cdot\:|\:\cdot\:\right\rangle _{\otimes q}$ denotes
the inner product for covariant $q$-tensors induced by the Riemannian
metric tensor $g$. The divergence operator $\nabla\cdot$ (or $\dive$)
is defined as the skew-adjoint of $\interior\nabla$, i.e.
\[
-\nabla\cdot\coloneqq\left(\interior\nabla\right)^{*}=G^{\ast}.
\]
We shall focus here on the Dirichlet case, although it should be clear
that many other boundary conditions can be treated in an analogous
way by prescribing a dense domain for $\nabla$ to establish $G$
($\nabla$ is the skew-adjoint of an analogously defined $\interior\nabla\cdot$).
If for example the domain of $\nabla$ is not constrained at all we
have Neumann type boundary conditions induced via $G^{*}=\interior\nabla\cdot$~.
However, to make the presentation not unnecessarily difficult to follow,
we shall focus on the Dirichlet case throughout.

Considering elements in $\bigoplus_{q\in\mathbb{N}}L^{2,q}\left(\Omega\right)$
as infinite column vectors we see that $A$ may also be written in
the suggestive infinite tridiagonal block operator matrix form
\[
A=\left(\begin{array}{ccccc}
0 & \;\nabla\cdot & \,0 & \cdots & \cdots\\
\interior\nabla & \;0 & \;\nabla\cdot & 0\\
0 & \;\interior\nabla & \;0 & \ddots & \quad\ddots\\
\vdots & \;0 & \ddots & \ddots & \;\ddots\\
\vdots &  & \;\,\ddots & \;\ddots & \;\ddots
\end{array}\right).
\]
From the associated evolutionary operators
\[
\left(\partial_{0}M_{0}+M_{1}+\partial_{0}^{-1}M_{2}+A\right)
\]
for suitable $M_{0},M_{1},M_{2}$ desired parts may be extracted from
this ``mother'' class of operators by a rigorous mechanism discussed
in \cite{Picard201354}. \end{example}
\begin{defn}
\label{def:relatives}Let $H,X$ be Hilbert spaces and $A:D\left(A\right)\subseteq H\to H$
densely defined closed linear. Moreover, let $B:H\to X$ be linear
and such that:
\begin{itemize}
\item $AB^{*}$ is densely defined (in $X$).
\end{itemize}
Then we call $B$ \emph{compatible }with $A$ and $\overline{BA}B^{*}$
the $\left(B\right)$-\emph{relative} (or simply a \emph{relative)}
of $A$. If the mapping $B$ is not a bijection, then we call $\overline{BA}B^{*}$
the $\left(B\right)$-\emph{descendant} (or simply a \emph{descendant)}
of $A$ (and $A$ the \emph{mother }operator of $\overline{BA}B^{*}$).
\end{defn}
We recall from \cite{Picard201354} the following result.
\begin{thm}
\label{thm:compatible}Let $A:D(A)\subseteq H_{0}\to H_{1}$ a densely
defined closed linear operator and $B\in L(H_{0},X)$ such that $AB^{\ast}$
is densely defined. Then

\[
(AB^{\ast})^{\ast}=\overline{BA^{\ast}}.
\]
\end{thm}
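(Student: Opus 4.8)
The plan is to reduce the asserted identity to two standard facts of unbounded operator theory: that composing with an everywhere-defined bounded operator behaves well under taking adjoints, and that a closed densely defined operator $A$ satisfies $A^{\ast\ast}=A$. The key input is the product rule: if $C$ is bounded and everywhere defined and $T$ is densely defined, then $(CT)^{\ast}=T^{\ast}C^{\ast}$ \emph{with equality} (not merely an inclusion), the point being that no domain is lost precisely because $C$ is everywhere defined. First I would record that $A^{\ast}$ is densely defined, which is legitimate since $A$ is closed and densely defined, so that $BA^{\ast}$ is a densely defined operator with domain $D(BA^{\ast})=D(A^{\ast})$; here $B$ imposes no further domain restriction because it is everywhere defined.

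Next I would verify the easy inclusion $BA^{\ast}\subseteq(AB^{\ast})^{\ast}$ by a direct computation: for $y\in D(A^{\ast})$ and any $x\in D(AB^{\ast})$, so that $B^{\ast}x\in D(A)$, one has
\[
\langle AB^{\ast}x,y\rangle_{H_{1}}=\langle B^{\ast}x,A^{\ast}y\rangle_{H_{0}}=\langle x,BA^{\ast}y\rangle_{X},
\]
which shows $y\in D((AB^{\ast})^{\ast})$ and $(AB^{\ast})^{\ast}y=BA^{\ast}y$. Since adjoints are always closed, passing to closures yields $\overline{BA^{\ast}}\subseteq(AB^{\ast})^{\ast}$. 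This settles one half of the claim and, in passing, exhibits $BA^{\ast}$ as a densely defined operator with densely defined adjoint, hence as a closable operator with $\overline{BA^{\ast}}=(BA^{\ast})^{\ast\ast}$.

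For the reverse inclusion I would apply the product rule to $BA^{\ast}$ itself. Writing $BA^{\ast}$ as the composition of the densely defined operator $A^{\ast}$ followed by the everywhere-defined bounded operator $B$, the rule gives
\[
(BA^{\ast})^{\ast}=(A^{\ast})^{\ast}B^{\ast}=A^{\ast\ast}B^{\ast}=AB^{\ast},
\]
the last equality using $A^{\ast\ast}=A$. Taking adjoints on both sides and invoking the closability of $BA^{\ast}$ noted above then yields
\[
(AB^{\ast})^{\ast}=(BA^{\ast})^{\ast\ast}=\overline{BA^{\ast}},
\]
which is the assertion. Observe that the hypothesis that $AB^{\ast}$ be densely defined is exactly what guarantees $(BA^{\ast})^{\ast}=AB^{\ast}$ is densely defined, so that forming the second adjoint is meaningful.

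The step carrying the real weight is the product rule $(BA^{\ast})^{\ast}=AB^{\ast}$, and the delicate point there is the \emph{equality} of domains rather than a mere inclusion $(BA^{\ast})^{\ast}\supseteq AB^{\ast}$. This equality hinges on $B$ being everywhere defined and bounded: then $y\in D((BA^{\ast})^{\ast})$ forces $B^{\ast}y\in D((A^{\ast})^{\ast})=D(A)$ without any loss, whereas for an unbounded $B$ one could in general only hope for an inclusion. Everything else is bookkeeping with the standard facts that adjoints are closed, that $A^{\ast\ast}=A$ for closed densely defined $A$, and that a densely defined operator with densely defined adjoint is closable with double adjoint equal to its closure.
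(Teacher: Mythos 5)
Your proof is correct and takes essentially the same route as the paper: both arguments hinge on the identity $(BA^{\ast})^{\ast}=AB^{\ast}$ followed by passing to second adjoints via $(BA^{\ast})^{\ast\ast}=\overline{BA^{\ast}}$, the only difference being that you invoke the standard product rule $(CT)^{\ast}=T^{\ast}C^{\ast}$ for bounded everywhere-defined $C$ together with $A^{\ast\ast}=A$, whereas the paper performs exactly that computation inline (and only records the inclusion $(BA^{\ast})^{\ast}\subseteq AB^{\ast}$, the other direction being immediate). Your supplementary observations -- that $D(BA^{\ast})=D(A^{\ast})$ is dense, that the easy inclusion already exhibits $BA^{\ast}$ as closable, and that the density hypothesis on $AB^{\ast}$ is what legitimizes forming the double adjoint -- simply make explicit steps the paper leaves implicit.
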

\begin{proof}
Obviously, $BA^{\ast}\subseteq\left(AB^{\ast}\right)^{\ast}$ which
yields $\overline{BA^{\ast}}\subseteq\left(AB^{\ast}\right)^{\ast}.$
Let now $x\in D\left(\left(BA^{\ast}\right)^{\ast}\right).$ Then
for each $y\in D(A^{\ast})$ we compute 
\[
\langle A^{\ast}y|B^{\ast}x\rangle=\langle BA^{\ast}y|x\rangle=\langle y|\left(BA^{\ast}\right)^{\ast}x\rangle
\]
which yields $x\in D(AB^{\ast})$, i.e. $\left(BA^{\ast}\right)^{\ast}\subseteq AB^{\ast}.$
The latter gives $\left(AB^{\ast}\right)^{\ast}\subseteq\left(BA^{\ast}\right)^{\ast\ast}=\overline{BA^{\ast}}$,
which completes the proof.\end{proof}
\begin{cor}
Let $G:D(G)\subseteq H_{0}\to H_{1}$ a densely defined closed linear
operator. Moreover, let $B_{0}\in L(H_{0},X)$ and $B_{1}\in L(H_{1},Y)$
such that $GB_{0}^{\ast}$ and $G^{\ast}B_{1}^{\ast}$ are densely
defined. Then 
\[
\left(\overline{B_{1}G}B_{0}^{\ast}\right)^{\ast}=\overline{B_{0}G^{\ast}B_{1}^{\ast}}
\]
\end{cor}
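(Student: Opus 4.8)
The plan is to obtain the identity by applying Theorem~\ref{thm:compatible} twice: once to strip off the bounded factor $B_0^*$, and once to rewrite the adjoint of $\overline{B_1G}$. First I would set $A\coloneqq\overline{B_1G}\colon D(\overline{B_1G})\subseteq H_0\to Y$, which is densely defined (since $D(G)\subseteq D(\overline{B_1G})$ is dense) and closed (being a closure), and take $B\coloneqq B_0\in L(H_0,X)$. Since $D(GB_0^*)=\{x\in X: B_0^*x\in D(G)\}$ is dense by hypothesis and $D(G)\subseteq D(\overline{B_1G})$, we have $D(GB_0^*)\subseteq D(\overline{B_1G}\,B_0^*)$, so $AB^*=\overline{B_1G}\,B_0^*$ is densely defined. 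Theorem~\ref{thm:compatible} then yields
\[
\bigl(\overline{B_1G}\,B_0^*\bigr)^*=\overline{B_0\bigl(\overline{B_1G}\bigr)^*},
\]
and it remains to identify $\bigl(\overline{B_1G}\bigr)^*$ and to simplify the resulting nested closure.

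For the identification I would apply Theorem~\ref{thm:compatible} a second time, now with $A\coloneqq G^*$ and $B\coloneqq B_1\in L(H_1,Y)$; the hypothesis that $G^*B_1^*$ is densely defined is exactly what the theorem requires. Since $G$ is closed, $G^{**}=G$, and the theorem gives
\[
\bigl(G^*B_1^*\bigr)^*=\overline{B_1G^{**}}=\overline{B_1G}.
\]
Because $G^*B_1^*$ is densely defined and its adjoint $\overline{B_1G}$ is densely defined, $G^*B_1^*$ is closable with $\bigl(G^*B_1^*\bigr)^{**}=\overline{G^*B_1^*}$; taking adjoints in the displayed identity therefore gives $\bigl(\overline{B_1G}\bigr)^*=\overline{G^*B_1^*}$. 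Substituting this into the first step leaves
\[
\bigl(\overline{B_1G}\,B_0^*\bigr)^*=\overline{B_0\,\overline{G^*B_1^*}}.
\]

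The one point that needs genuine care — rather than a routine invocation of Theorem~\ref{thm:compatible} — is collapsing the nested closure $\overline{B_0\,\overline{G^*B_1^*}}$ to $\overline{B_0G^*B_1^*}$. For this I would prove the small lemma that for a bounded operator $B$ and a closable operator $T$ one has $\overline{B\,\overline{T}}=\overline{BT}$: the inclusion $\overline{BT}\subseteq\overline{B\,\overline{T}}$ is immediate from $T\subseteq\overline{T}$, while for the reverse inclusion one takes $y\in D(\overline{T})$ together with $y_n\in D(T)$ such that $y_n\to y$ and $Ty_n\to\overline{T}y$, and uses boundedness of $B$ to conclude $BTy_n\to B\overline{T}y$, so that $(y,B\overline{T}y)$ lies in the graph closure of $BT$, i.e. $B\overline{T}\subseteq\overline{BT}$. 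Applying this with $B=B_0$ and $T=G^*B_1^*$ converts the right-hand side into $\overline{B_0G^*B_1^*}$ and finishes the proof. The only subtleties throughout are the bookkeeping of domains and the closability step, all of which rest on the density hypotheses imposed on $GB_0^*$ and $G^*B_1^*$ and on $G$ being closed.
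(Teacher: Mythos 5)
Your argument is correct, and its core step --- the identification $\overline{B_{1}G}=\left(G^{\ast}B_{1}^{\ast}\right)^{\ast}$ obtained from Theorem \ref{thm:compatible} applied to $A=G^{\ast}$, $B=B_{1}$, together with $G^{\ast\ast}=G$ --- is exactly the paper's opening move. You part ways in the treatment of the outer adjoint: the paper never applies Theorem \ref{thm:compatible} a second time. It substitutes the identification and then uses the elementary exact identity $\left(B_{0}S\right)^{\ast}=S^{\ast}B_{0}^{\ast}$, valid for any densely defined $S$ and bounded $B_{0}$, to compute
\[
\left(\overline{B_{1}G}B_{0}^{\ast}\right)^{\ast}=\left(\left(G^{\ast}B_{1}^{\ast}\right)^{\ast}B_{0}^{\ast}\right)^{\ast}=\left(B_{0}G^{\ast}B_{1}^{\ast}\right)^{\ast\ast}=\overline{B_{0}G^{\ast}B_{1}^{\ast}},
\]
where the last equality needs only that $\left(B_{0}G^{\ast}B_{1}^{\ast}\right)^{\ast}\supseteq\overline{B_{1}G}B_{0}^{\ast}$ is densely defined, which follows from the hypothesis on $GB_{0}^{\ast}$. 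Your route --- a second application of Theorem \ref{thm:compatible} to the pair $\left(\overline{B_{1}G},B_{0}\right)$ --- produces the nested closure $\overline{B_{0}\,\overline{G^{\ast}B_{1}^{\ast}}}$ and therefore forces the auxiliary collapsing lemma. One caveat on that lemma as you state it: for bounded $B$ and merely closable $T$, the composition $BT$ need \emph{not} be closable (for instance $Te_{n}=ne_{1}+n^{2}e_{n}$ on the finite span of an orthonormal basis of $\ell^{2}$ is closable, yet with $P$ the orthogonal projection onto $\lspan\{e_{1}\}$ one has $PT\left(e_{k}/k\right)=e_{1}$ for $k\geq2$ while $e_{k}/k\to0$, so $PT$ is not closable), so $\overline{B\overline{T}}=\overline{BT}$ is in general only an identity of closures of \emph{graphs} --- which is precisely what your approximation argument proves. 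In the situation at hand this is enough: $B_{0}G^{\ast}B_{1}^{\ast}$ is closable because its adjoint contains the densely defined operator $\overline{B_{1}G}B_{0}^{\ast}$ (again by the density of $D\left(GB_{0}^{\ast}\right)$), and once this is observed your chain closes as operators. In sum, both proofs are sound: the paper's double-adjoint computation is shorter and bypasses nested closures and closability questions altogether, while your version stays uniformly within Theorem \ref{thm:compatible}, isolates a reusable graph-closure principle, and makes explicit where the density hypothesis on $GB_{0}^{\ast}$ enters --- a point the paper leaves implicit in the step $\left(\,\cdot\,\right)^{\ast\ast}=\overline{\left(\,\cdot\,\right)}$.
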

\begin{proof}
By Theorem \ref{thm:compatible} we have that 
\begin{align*}
\left(\overline{B_{1}G}B_{0}^{\ast}\right)^{\ast} & =\left(\left(G^{\ast}B_{1}^{\ast}\right)^{\ast}B_{0}^{\ast}\right)^{\ast}\\
 & =\left(B_{0}G^{\ast}B_{1}^{\ast}\right)^{\ast\ast}\\
 & =\overline{B_{0}G^{\ast}B_{1}^{\ast}}.\tag*{\qedhere}
\end{align*}

\end{proof}
As an immediate consequence for the type of equations of interest
here, we record for later reference the following corollary.
\begin{cor}
\label{cor:well-posedness_descendant}Let $G:D(G)\subseteq H_{0}\to H_{1}$
a densely defined closed linear operator and set $A\coloneqq\left(\begin{array}{cc}
0 & -G^{\ast}\\
G & 0
\end{array}\right)$ and $H\coloneqq H_{0}\oplus H_{1}$. Moreover, let $M_{0},M_{1},M_{2}:H\to H$
be continuous linear operators satisfying (\ref{eq:pos-def}) and
$S=\left(\begin{array}{cc}
B_{0} & 0\\
0 & B_{1}
\end{array}\right)\in L(H,Z),$ where $B_{0}\in L(H_{0},X),\, B_{1}(H_{1},Y)$ and $Z\coloneqq X\oplus Y$
such that $GB_{0}^{\ast}$ and $G^{\ast}B_{1}^{\ast}$ are densely
defined. Then $A_{S}\coloneqq\left(\begin{array}{cc}
0 & -\overline{B_{0}G^{\ast}}B_{1}^{\ast}\\
\overline{B_{1}GB_{0}^{\ast}} & 0
\end{array}\right)$ and $M_{1,S}\coloneqq SM_{1}S^{\ast}$ are skew-selfadjoint, while
the operators $M_{i,S}\coloneqq SM_{i}S^{\ast}\in L(Z)$, $i\in\{0,2\}$
are selfadjoint. If $S^{\ast}$ has a bounded left-inverse%
\footnote{This assumption is met for instance if $S$ is the orthogonal projection
onto a closed subspace $Z$ of $H$. In this case $S^{\ast}$ is the
canonical embedding of $Z$ into $H$ and $\left(S^{\ast}\right)^{-1}=S|_{S^{\ast}[H]}$,
compare Lemma \ref{fac-lem}. %
}, i.e. there exists $\left(S^{\ast}\right)^{-1}:S^{\ast}[Z]\subseteq H\to Z$
with $\left(S^{\ast}\right)^{-1}S^{\ast}=1_{Z}$, then $M_{i,S}$
satisfies the hypotheses (\ref{eq:pos-def}), $i\in\{0,1,2\}$, and
in this case the evolutionary problem 
\[
\left(\partial_{0}M_{0,S}+M_{1,S}+\partial_{0}^{-1}M_{2,S}+A_{S}\right)U=F
\]
is well-posed in $H_{\rho,0}\left(\mathbb{R},Z\right)$ for all sufficiently
large $\rho\in\oi0\infty$ and the corresponding solution operator
is causal. \end{cor}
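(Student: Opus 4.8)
The plan is to check that the transformed data $A_S$, $M_{0,S}$, $M_{1,S}$, $M_{2,S}$ satisfy exactly the hypotheses of Theorem \ref{linear-evolutionary-solution-theory} and then to invoke that theorem. First I would treat $A_S$. Writing $C\coloneqq\overline{B_1 G B_0^*}$, the operator $C$ is densely defined (its domain agrees with that of $GB_0^*$, since $B_1$ is everywhere defined and bounded) and closed, being a closure. I would then use the preceding corollary to identify $C^* = \overline{B_0 G^* B_1^*} = \overline{B_0 G^*}B_1^*$, so that $A_S = \left(\begin{smallmatrix} 0 & -C^* \\ C & 0 \end{smallmatrix}\right)$; since $C^{**}=C$, a one-line block computation gives $A_S^* = -A_S$. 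For the material operators I would merely use boundedness of $S$ together with $M_1^* = -M_1$ and $M_i^* = M_i$ ($i\in\{0,2\}$): then $(SM_1S^*)^* = SM_1^*S^* = -M_{1,S}$ and $(SM_iS^*)^* = SM_i^*S^* = M_{i,S}$, so $M_{1,S}$ is skew-selfadjoint and $M_{0,S},M_{2,S}$ are selfadjoint, all as bounded everywhere-defined operators on $Z$.

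The substantive point is strict positive definiteness of $M_{0,S}$, and here I would use the left-inverse hypothesis on $S^*$. From $(S^*)^{-1}S^* = 1_Z$ one gets $\|z\| \le \|(S^*)^{-1}\|\,\|S^* z\|$, hence $\|S^* z\| \ge \|(S^*)^{-1}\|^{-1}\|z\|$ for every $z\in Z$. Combining this with a positivity constant $c>0$ for $M_0$ on $H$ yields
\[
\langle z | M_{0,S}\, z \rangle = \langle S^* z | M_0 S^* z \rangle \ge c\,\|S^* z\|^2 \ge \frac{c}{\|(S^*)^{-1}\|^2}\,\|z\|^2 ,
\]
so that $M_{0,S}$ is strictly positive definite on $Z$. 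Together with the adjointness relations above, this shows that $M_{0,S},M_{1,S},M_{2,S}$ satisfy (\ref{eq:pos-def}).

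With $A_S$ skew-selfadjoint and the transformed material operators satisfying (\ref{eq:pos-def}), Theorem \ref{linear-evolutionary-solution-theory} applies verbatim to $\left(\partial_0 M_{0,S} + M_{1,S} + \partial_0^{-1}M_{2,S} + A_S\right)U = F$, giving well-posedness in $H_{\rho,0}(\mathbb{R},Z)$ for all sufficiently large $\rho>0$ together with causality of the solution operator. I expect the only genuinely delicate step to be the first one: the bookkeeping of domains and closures needed to recognise $A_S$ as a block operator $\left(\begin{smallmatrix} 0 & -C^* \\ C & 0 \end{smallmatrix}\right)$ — in particular, checking that $\overline{B_1 G B_0^*}$ is exactly the operator covered by the preceding corollary and that its adjoint is the prescribed upper-right block $-\overline{B_0 G^*}B_1^*$, since a priori $\overline{B_1 G}\,B_0^*$ and $\overline{B_1 G B_0^*}$ need not coincide when $B_1$ fails to be injective. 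Once these closure identities are in place, the coercivity estimate and the cited solution theory finish the argument routinely.
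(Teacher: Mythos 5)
Your overall architecture --- verify that $A_{S}$ is skew-selfadjoint and that $M_{0,S},M_{1,S},M_{2,S}$ satisfy (\ref{eq:pos-def}), then invoke Theorem \ref{linear-evolutionary-solution-theory} on $H_{\rho,0}(\mathbb{R},Z)$ --- is exactly what the paper intends (it records the corollary as an immediate consequence, without a written proof), and both your coercivity estimate for $M_{0,S}$ via the bounded left-inverse of $S^{\ast}$ and the adjoint identities for the bounded operators $M_{i,S}$ are correct. However, the step where you identify the adjoint of $C\coloneqq\overline{B_{1}GB_{0}^{\ast}}$ would fail as written. You invoke ``the preceding corollary'', but that corollary computes $\bigl(\overline{B_{1}G}\,B_{0}^{\ast}\bigr)^{\ast}=\overline{B_{0}G^{\ast}B_{1}^{\ast}}$, i.e.\ the adjoint of a \emph{different} operator: $\overline{B_{1}G}\,B_{0}^{\ast}$ and $\overline{B_{1}GB_{0}^{\ast}}$ need not coincide, as you yourself observe at the end. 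Worse, your intermediate equality $\overline{B_{0}G^{\ast}B_{1}^{\ast}}=\overline{B_{0}G^{\ast}}\,B_{1}^{\ast}$ is false in general: one only has the inclusion $\overline{B_{0}G^{\ast}B_{1}^{\ast}}\subseteq\overline{B_{0}G^{\ast}}\,B_{1}^{\ast}$, and Remark \ref{rem:closure}(b) exhibits precisely this kind of strict inclusion $\overline{AB}\subsetneq\overline{A}B$ for a closable $A$ and bounded $B$. (Also, the obstruction discussed there concerns a bounded left-inverse for the adjoint of the bounded factor, not injectivity of $B_{1}$ as your closing aside suggests.)

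The repair is one line and needs no additional hypotheses, so your conclusion stands: since $B_{1}$ is bounded and everywhere defined and $T\coloneqq GB_{0}^{\ast}$ is densely defined, the standard rule $(B_{1}T)^{\ast}=T^{\ast}B_{1}^{\ast}$ applies, and Theorem \ref{thm:compatible} gives $T^{\ast}=\overline{B_{0}G^{\ast}}$; hence
\[
C^{\ast}=\bigl(B_{1}GB_{0}^{\ast}\bigr)^{\ast}=\overline{B_{0}G^{\ast}}\,B_{1}^{\ast},
\]
which is exactly the prescribed upper-right block, and the block computation $A_{S}^{\ast}=-A_{S}$ then goes through using $C^{\ast\ast}=C$. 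Note also that you should verify closability of $B_{1}GB_{0}^{\ast}$ before writing its closure $C$: this holds because its adjoint $\overline{B_{0}G^{\ast}}\,B_{1}^{\ast}$ extends $B_{0}G^{\ast}B_{1}^{\ast}$, whose domain $D(G^{\ast}B_{1}^{\ast})$ is dense by hypothesis. With these two points fixed, the rest of your argument --- selfadjointness and skew-selfadjointness of the $M_{i,S}$, the estimate $\langle z|M_{0,S}z\rangle\geq c\,\Vert(S^{\ast})^{-1}\Vert^{-2}\Vert z\Vert^{2}$, and the verbatim application of Theorem \ref{linear-evolutionary-solution-theory} yielding well-posedness and causality --- is sound.
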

\begin{rem}
\label{rem:closure}(a) Note that, in general, the operator $A_{S}$
is not the $\left(S\right)$-descendant of $A,$ since $A_{S}\ne\overline{SA}S^{\ast}.$
In order to replace $A_{S}$ by the $\left(S\right)$-descendant of
$A$ in the previous corollary, we have to guarantee the skew-selfadjointness
of 
\[
\overline{SA}S^{\ast}=\left(\begin{array}{cc}
0 & -\overline{B_{0}G^{\ast}}B_{1}^{\ast}\\
\overline{B_{1}G}B_{0}^{\ast} & 0
\end{array}\right),
\]
that is, we have to guarantee 
\begin{equation}
\overline{B_{1}G}B_{0}^{\ast}=\overline{B_{1}GB_{0}^{\ast}}.\label{eq:closure}
\end{equation}
The latter is true, if one for instance requires that $B_{0}^{\ast}:X\to H_{0}$
has a bounded left-inverse, i.e. there exists $C:\overline{B_{0}^{\ast}[X]}\to X$
such that $CB_{0}^{\ast}=1_{X},$ the identity on $X$, and that the
set $B_{0}^{\ast}[X]\cap D(G)$ is a core for $\overline{B_{1}G}|_{\overline{B_{0}^{\ast}[X]}}$.
Indeed, let $x\in D(\overline{B_{1}G}B_{0}^{\ast}).$ Then by assumption
there exists a sequence $(x_{n})_{n\in\mathbb{N}}$ in $X$ such that
$B_{0}^{\ast}x_{n}\in D(G)$ and
\begin{align*}
B_{0}^{\ast}x_{n} & \to B_{0}^{\ast}x\\
B_{1}GB_{0}^{\ast}x_{n} & \to\overline{B_{1}G}B_{0}^{\ast}x
\end{align*}
as $n\to\infty.$ Using that $B_{0}^{\ast}$ has a bounded left-inverse,
we obtain $x_{n}\to x$, which yields $x\in D\left(\overline{B_{1}GB_{0}^{\ast}}\right).$
Thus, $\overline{B_{1}G}B_{0}^{\ast}\subseteq\overline{B_{1}GB_{0}^{\ast}}$
and since the other inclusion holds trivially, we derive the assertion.\\
(b) Note that (\ref{eq:closure}) is false in general, \cite{2013}.
The counterexample is based on the following observation made by J.
Epperlein and H. Vogt:\\
We ask the following question: Is there a closable operator $A$ and
a bounded operator $B$ such that $\overline{AB}\subsetneq\overline{A}B$?
Let $H$ be an infinite dimensional Hilbert space and $A_{0},A_{1}$
be two closed, densely defined operators on $H,$ such that $A_{0}\subsetneq A_{1}.$
Moreover let $(y_{n})_{n\in\mathbb{N}}$ a linear independent sequence
in $H$ such that $y_{n}\to0$. For a fixed $x_{1}\in D(A_{1})\setminus D(A_{0})$
we define the operator 
\begin{align*}
A:D(A)\subseteq H\oplus H & \to H\\
(x,y) & \mapsto A_{1}x,
\end{align*}
where 
\[
D(A)\coloneqq\left(D(A_{0})\times\{0\}\right)\oplus\lspan\left\{ \left.(x_{1},y_{n})\,\right|\, n\in\mathbb{N}\right\} .
\]
Moreover, set 
\begin{align*}
B:H & \to H\oplus H\\
x & \mapsto(x,0).
\end{align*}
Then $AB=A_{0},$ while $x_{1}\in D(\overline{A}B).$\end{rem}
\begin{example}
We want to give an illustrative example for the aforementioned mechanism.
For this recall the differential geometric setting in Example \ref{example:covaraint derivative}.
Set $H_{0}\coloneqq\bigoplus_{q=0}^{\infty}L^{2,q}(\Omega)$ and $H_{1}\coloneqq\bigoplus_{q=1}^{\infty}L^{2,q}(\Omega).$
Define $G\coloneqq\interior\nabla,$ which yields $G^{\ast}=-\nabla\cdot$,
which are both densely defined closed linear operators. $A$ is given
by 
\[
A\coloneqq\left(\begin{array}{cc}
0 & \nabla\cdot\\
\interior\nabla & 0
\end{array}\right):D(A)\subseteq H\to H,
\]
where $H\coloneqq H_{0}\oplus H_{1}.$ Let 
\begin{align*}
B_{0}:H_{0} & \to L^{2,1}(\Omega)\\
\left(\begin{array}{c}
f_{0}\\
f_{1}\\
f_{2}\\
\vdots
\end{array}\right) & \mapsto f_{1}
\end{align*}
and 
\begin{align*}
B_{1}:H_{1} & \to L^{2,2}(\Omega)\\
\left(\begin{array}{c}
f_{1}\\
f_{2}\\
\vdots
\end{array}\right) & \mapsto f_{2}
\end{align*}
be the canonical projections onto the second entries in their respective
domain spaces. Then $B_{0}^{\ast}$ and $B_{1}^{\ast}$ are just the
corresponding canonical embeddings. Thus, $\overline{B_{1}\interior\nabla B_{0}^{\ast}}\subseteq L^{2,1}(\Omega)\oplus L^{2,2}(\Omega)$
is just the covariant derivative on $1$-tensor fields with Dirichlet
boundary conditions%
\footnote{Indeed, in this situation $\overline{B_{1}\interior\nabla B_{0}^{\ast}}=\overline{B_{1}\interior\nabla}B_{0}^{\ast}$
according to Remark \ref{rem:closure}.%
} and hence, $\overline{B_{0}\nabla\cdot\,}B_{1}^{\ast}\subseteq L^{2,2}(\Omega)\oplus L^{2,1}(\Omega)$
equals the usual divergence on $2$-tensor fields without boundary
conditions. Thus, the descendant problem 
\[
\left(\partial_{0}M_{0,S}+M_{1,S}+\partial_{0}^{-1}M_{2,S}+A_{S}\right)U=F
\]
in Corollary \ref{cor:well-posedness_descendant} is just a reduction
of the full problem to a closed subspace, namely $L^{2,1}(\Omega)\oplus L^{2,2}(\Omega).$
\end{example}
Although the example above is trivial, it turns out that in many application
this process of dimension reduction occurs, where the operators $B_{0}$
and $B_{1}$ are given as suitable orthogonal projections. For the
purpose of our following considerations we first record the following
rather elementary observation, which is just a variant of the projection
theorem.
\begin{lem}[{see e.g.~\cite[Lemma 3.2]{PTW_frac}}]
\label{fac-lem}Let $\iota$ be the canonical (isometric) embedding
of a closed subspace $V\subseteq H$ into $H.$ Then $\iota\iota^{*}$
is the orthogonal projector onto $V$. Let $\kappa$ be the canonical
embedding of $V^{\perp}$ into $H$ then we have
\begin{equation}
\iota\iota^{*}+\kappa\kappa^{*}=1.\label{eq:pro--sum}
\end{equation}
\end{lem}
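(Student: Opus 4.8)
The plan is to reduce everything to two standard facts: that a bounded operator which is simultaneously self-adjoint and idempotent is exactly an orthogonal projector (with range equal to its image), and that $H=V\oplus V^{\perp}$. Both are elementary consequences of the projection theorem, which is why the lemma is advertised as a mere variant of it.

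First I would exploit that $\iota$ is isometric, which gives $\iota^{*}\iota=1_{V}$, the identity on $V$. From this single identity the operator $\iota\iota^{*}\in L(H)$ is idempotent, since $(\iota\iota^{*})^{2}=\iota\,(\iota^{*}\iota)\,\iota^{*}=\iota\iota^{*}$, and it is self-adjoint, since $(\iota\iota^{*})^{*}=\iota^{**}\iota^{*}=\iota\iota^{*}$ (using $\iota^{**}=\iota$ for the bounded operator $\iota$). Hence $\iota\iota^{*}$ is an orthogonal projector, and its range coincides with the range of $\iota$, which is precisely $V$. This settles the first assertion: $\iota\iota^{*}$ is the orthogonal projector onto $V$. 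Applying the identical argument verbatim to the closed subspace $V^{\perp}$ with its canonical embedding $\kappa$ yields that $\kappa\kappa^{*}$ is the orthogonal projector onto $V^{\perp}$.

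Finally, the projection theorem furnishes the orthogonal decomposition $H=V\oplus V^{\perp}$, so that every $x\in H$ is written uniquely as $x=\iota\iota^{*}x+\kappa\kappa^{*}x$ with the two summands in $V$ and $V^{\perp}$ respectively; this is exactly the statement $\iota\iota^{*}+\kappa\kappa^{*}=1$ of (\ref{eq:pro--sum}).

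There is no genuine obstacle here, the content being a repackaging of the projection theorem. The only point requiring a little care is the bookkeeping between the abstract space $V$ (the domain of $\iota$) and its isometric copy inside $H$, that is, keeping track of where $\iota^{*}$ lands. This is harmless once one notes that $\iota^{*}x$ is characterized by $\langle \iota^{*}x\,|\,v\rangle_{V}=\langle x\,|\,v\rangle_{H}$ for all $v\in V$, which identifies $\iota^{*}$ with the orthogonal projection onto $V$ followed by the inverse of $\iota$ on its range; the approach above via idempotency and self-adjointness sidesteps even this computation.
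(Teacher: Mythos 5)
Your proof is correct and complete: the identity $\iota^{*}\iota=1_{V}$ coming from isometry, the observation that $\iota\iota^{*}$ is a self-adjoint idempotent and hence the orthogonal projector onto $\operatorname{ran}\iota=V$, and the decomposition $H=V\oplus V^{\perp}$ together give (\ref{eq:pro--sum}) exactly as claimed. The paper itself supplies no proof here --- it states the lemma with an external citation and describes it as ``just a variant of the projection theorem'' --- and your argument is precisely that standard route, so there is nothing to compare beyond noting that you have filled in the omitted details (including the small point that $\operatorname{ran}(\iota\iota^{*})=\operatorname{ran}\iota$, which follows from $\iota=\iota(\iota^{*}\iota)$) correctly.
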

\begin{rem}
\label{rem-block}Equality (\ref{eq:pro--sum}) may be written in
an intuitive block operator matrix notation as
\[
\left(\begin{array}{cc}
\iota & \kappa\end{array}\right)\left(\begin{array}{c}
\iota^{*}\\
\kappa^{*}
\end{array}\right)=1.
\]
It may also be worth noting that $\iota^{*}\iota:V\to V$ and $\kappa^{*}\kappa:V^{\perp}\to V^{\perp}$
are just the identities on $V$ and $V^{\perp}$, respectively. It
is common practice to identify $H$ and $V\oplus V^{\perp}$, which
makes
\begin{equation}
\left(\begin{array}{c}
\iota^{*}\\
\kappa^{*}
\end{array}\right):H\to V\oplus V^{\perp}\eqqcolon\left(\begin{array}{c}
V\\
V^{\perp}
\end{array}\right),\; x\mapsto\left(\begin{array}{c}
\iota^{*}\\
\kappa^{*}
\end{array}\right)x=\left(\begin{array}{c}
\iota^{*}x\\
\kappa^{*}x
\end{array}\right)\label{eq:proj}
\end{equation}
the identity. However, for our purposes it appears helpful to avoid
this identification. 

The mapping (\ref{eq:proj}) is obviously unitary, which allows us
for example to study an equation of the form 
\[
NU=F
\]
for a bounded linear operator $N$ in $H$ via the unitarily equivalent
block operator matrix equation 
\[
\left(\begin{array}{c}
\iota^{*}\\
\kappa^{*}
\end{array}\right)N\left(\begin{array}{cc}
\iota & \kappa\end{array}\right)\left(\begin{array}{c}
\iota^{*}\\
\kappa^{*}
\end{array}\right)U=\left(\begin{array}{cc}
\iota^{*}N\iota & \iota^{*}N\kappa\\
\kappa^{*}N\iota & \kappa^{*}N\kappa
\end{array}\right)\left(\begin{array}{c}
\iota^{*}U\\
\kappa^{*}U
\end{array}\right)=\left(\begin{array}{c}
\iota^{*}F\\
\kappa^{*}F
\end{array}\right).
\]
Note that
\[
\left(\begin{array}{cc}
\iota & \kappa\end{array}\right):\left(\begin{array}{c}
V\\
V^{\perp}
\end{array}\right)\to H,\;\left(\begin{array}{c}
u\\
v
\end{array}\right)\mapsto\left(\begin{array}{cc}
\iota & \kappa\end{array}\right)\left(\begin{array}{c}
u\\
v
\end{array}\right)=\iota u+\kappa v
\]
is the inverse of (\ref{eq:proj}). 
\end{rem}
As a general notational convention we shall use that if $P:H\to H$
is an orthogonal projector then the canonical embedding of its range
into the Hilbert space $H$ will be denoted by $\iota_{P}$ so that
\[
P=\iota_{P}\iota_{P}^{*}.
\]

\begin{rem}
\label{rem:degenerate-materials}We give another typical example for
the reduction process via orthogonal projectors, arising due to singularities
in the constitutive relation. In applications one may find a formal
equation of the form 
\[
\partial_{0}V+\left(M_{1}+A\right)U=f,
\]
with a corresponding material relation, which is frequently simply
given as 
\[
U=N_{0}V,
\]
where $N_{0}\in L(H)$ is a selfadjoint, non-negative operator. If
$N_{0}$ is strictly positive definite then 
\[
V=M_{0}U
\]
with $M_{0}=N_{0}^{-1}$, in which case we are led to the particular
situation of evolutionary equations considered here. 

Frequently, however, the operator $N_{0}$ may be initially not invertible
since $N_{0}$ has a non-trivial null space. Let $Q_{0}$ be the (non-trivial)
orthogonal projector onto the range of $N_{0}$ $(Q_{0}\notin\{0,1\})$
then $1-Q_{0}$ is the (non-trivial) orthogonal projector onto the
kernel of $N_{0}$. This fact suggests to reduce the material relation
to 
\begin{align*}
\iota_{Q_{0}}^{*}U & =\left(\iota_{Q_{0}}^{*}N_{0}\iota_{Q_{0}}\right)\iota_{Q_{0}}^{*}V
\end{align*}
and assuming that $N_{0}$ is strictly positive definite on its own
range, i.e. $\iota_{Q_{0}}^{*}N_{0}\iota_{Q_{0}}$is strictly positive
definite. We have
\[
\iota_{Q_{0}}^{*}V=\left(\iota_{Q_{0}}^{*}N_{0}\iota_{Q_{0}}\right)^{-1}\left(\iota_{Q_{0}}^{*}U\right).
\]
The resulting evolutionary equation of the form assumed in this paper
is now the reduced equation
\begin{equation}
\left(\partial_{0}\tilde{M}_{0}+\tilde{M}_{1}+\tilde{A}\right)\tilde{U}=\tilde{f},\label{eq:descendant}
\end{equation}
where $\tilde{U}\coloneqq\iota_{Q_{0}}^{*}U$, $\tilde{A}\coloneqq\overline{\iota_{Q_{0}}^{*}A}\iota_{Q_{0}}$,
$\tilde{M}_{0}\coloneqq\left(\iota_{Q_{0}}^{*}N_{0}\iota_{Q_{0}}\right)^{-1}$,
$\tilde{M}_{1}\coloneqq\iota_{Q_{0}}^{*}M_{1}\iota_{Q_{0}}$, $\tilde{f}\coloneqq\iota_{Q_{0}}^{*}f$,
which is a descendant problem in the sense of Corollary \ref{cor:well-posedness_descendant},%
\footnote{In fact this is a well-posed descendant of a possibly ill-posed problem,
a situation which was not addressed in Corollary \ref{cor:well-posedness_descendant}.
However, if one replaces $N_{0}$ by the strictly positive definite
and selfadjoint operator $N_{0}+\epsilon\left(1-Q_{0}\right)$ for
some $\varepsilon>0$, the original problem is well-posed and its
descendant problem is actually given by (\ref{eq:descendant}) and
so Corollary \ref{cor:well-posedness_descendant} applies.%
} where $S=\iota_{Q_{0}}^{\ast}$, provided that $\tilde{A}$ is skew-selfadjoint
(compare Remark \ref{rem:closure}). In other words we are led to
seek solutions in the smaller space $H_{\rho,0}\left(\mathbb{R},Q_{0}\left[H\right]\right)$.
Note that $\tilde{M}_{0}$ is now by construction and the assumptions
on $N_{0}$ continuous selfadjoint and strictly positive definite
on $Q_{0}\left[H\right]=N_{0}\left[H\right]$ and with $M_{1}$ also
$\tilde{M}_{1}$ remains skew-selfadjoint. 
\end{rem}

\section{\label{sec:Cosserat-Elasticity}On Some Models of Deformable Solids
and their Interconnection}

We discuss the equations of elasticity in a $3$-dimensional (differentiable)
Riemannian submanifold $M$ and think of $\Omega\subseteq M$ being
an open subset of $M$ modeling the body under consideration in its
non-deformed state. Recall the functional analytic setting of Example
\ref{example:covaraint derivative}, i.e., $\interior\nabla$ denotes
the covariant derivative on $L^{2,s}(\Omega)$ taking values in $L^{2,s+1}(\Omega)$,
$s\in\mathbb{N}$, with (generalized) Dirichlet boundary conditions
and its skew-adjoint $\nabla\cdot$. It will be the purpose of the
following to discuss several models of elasticity and to describe
their interconnection. We show that independent of the physical interpretation
of the given quantities, it is possible invoking the mother and descendant
mechanism to derive all these models from the model for micromorphic
media proposed by R.D.~Mindlin, \cite{0119.40302}. Anticipating
the theory discussed in the previous sections and realizing that in
the theory of elasticity, one is confronted with symmetric, skew-symmetric
and trace-free parts of $2$-tensors, we introduce some projections
in $L^{2,2}(\Omega)$. 

For this let $\tau\in L^{2,2}(\Omega)$ be a covariant tensor field
of order $2$. For%
\footnote{We denote the tangent space at $p$ by 
\[
T_{p}M\coloneqq\mbox{span}\{\partial_{j}\phi(\phi^{-1}(p));\phi\colon U\subseteq\mathbb{R}^{3}\to M\cap\Omega\mbox{ local parametrization},j\in\{1,2,3\}\}.
\]
} $p\in\Omega$, $x,y\in T_{p}M$, we define 
\begin{align*}
\left(\mathrm{skew}\tau\right)_{p}\left(x,y\right) & \coloneqq\frac{1}{2}\left(\tau_{p}\left(x,y\right)-\tau_{p}\left(y,x\right)\right)\\
\left(\mathrm{sym}\tau\right)_{p}\left(x,y\right) & \coloneqq\frac{1}{2}\left(\tau_{p}\left(x,y\right)+\tau_{p}\left(y,x\right)\right)
\end{align*}
and 
\[
\trace:L^{2,2}(\Omega)\to L^{2,0}(\Omega)
\]
with 
\[
\left(\trace\tau\right)_{p}\coloneqq\left\langle \left\langle \:\cdot\:|\:\cdot\:\right\rangle _{p}|\tau_{p}\right\rangle _{\otimes2},
\]
where for the Riemannian metric tensor $g$ at $p$ we write $g_{p}=\left\langle \:\cdot\:|\:\cdot\:\right\rangle _{p}$.
It is 
\[
\left(\trace^{*}\varphi\right)_{p}=\varphi\left(p\right)g_{p}
\]
for $\varphi\in L^{2,0}(\Omega)$ and we set 
\[
\mathbb{P}=\frac{1}{3}\trace^{*}\trace.
\]
We define $\sym_{0}\coloneqq\left(1-\mathbb{P}\right)\mathrm{sym}=\mathrm{sym}\left(1-\mathbb{P}\right)$.
The operators $\sym,\sym_{0},\skew$ and $\mathbb{P}$ are orthogonal
projectors in $L^{2,2}\left(\Omega\right)$, which satisfy
\begin{equation}
\mathrm{skew}+\mathrm{sym}_{0}+\mathbb{P}=1.\label{eq:orthogonal-decomposition}
\end{equation}
The part $\mathrm{skew}+\mathrm{sym}_{0}=1-\mathbb{P}$ is frequently
referred to as \emph{deviatoric} part and $\mathbb{P}$ as \emph{volumetric}
or pressure part. The symmetric part is given by $\mathrm{sym}=\mathrm{sym}_{0}+\mathbb{P}=1-\mathrm{skew}$.
Recall our convention $\iota_{\sym}\colon\sym[L^{2,2}(\Omega)]\to L^{2,2}(\Omega),\phi\mapsto\phi$
to be the canonical injection and similarly for $\skew,\sym_{0}$
and $\mathbb{P}$.

Consider the particular Euclidean case with $\Omega$ a non-empty
open subset of $M=\mathbb{R}^{3}$ with the Euclidean inner product.
In this case
\[
T_{p}M=\mathbb{R}^{3}
\]
and the matrix representation of $g_{p}$ with respect to the canonical
basis of $\mathbb{R}^{3}$ is simply the unit matrix independent of
$p\in\Omega$. Thus, in particular $\trace\tau$ is just the matrix
trace of the matrix representation $\left(\tau_{ij}\right)_{i,j}$
of $\tau$ and the inner product of 2-tensors is the Frobenius inner
product
\[
\left(\tau,\sigma\right)\mapsto\trace\left(\left(\tau_{ij}\right)_{i,j}^{*}\left(\sigma_{kl}\right)_{k,l}\right).
\]

\subsection{\label{sub:Micromorphic-Media}Micromorphic Media}

R. D. Mindlin, \cite{0119.40302}, has proposed modified Cosserat
type media, which Eringen, see e.g. \cite{zbMATH03352829}, in his
comprehensive and systematic studies of generalized continuum mechanics
labeled as ``micromorphic''. We set-up the system properly in $H_{\rho,0}\left(\mathbb{R},H\right)$
with the underlying Hilbert space $H$ being
\[
H=L^{2,1}\left(\Omega\right)\oplus L^{2,2}\left(\Omega\right)\oplus L^{2,2}\left(\Omega\right)\oplus L^{2,3}\left(\Omega\right)\oplus\mathrm{sym}\left[L^{2,2}\left(\Omega\right)\right].
\]
The dynamic equations read as
\begin{align}
\partial_{0}\rho_{0}\dot{u}-\nabla\cdot\left(\iota_{\mathrm{sym}}\tau+\sigma\right) & =f\nonumber \\
\partial_{0}\rho_{2}\dot{\psi}-\nabla\cdot\mu-\sigma & =h,\label{eq:dyn_micro_med}
\end{align}
where $f\in H_{\rho,0}(\mathbb{R};L^{2,1}(\Omega)),h\in H_{\rho,0}(\mathbb{R};L^{2,2}(\Omega))$
are given quantities and $\rho_{0},\rho_{2}$ are the canonical extensions
of bounded linear operators within $L^{2,1}(\Omega)$ and $L^{2,2}(\Omega)$,
respectively. The unknowns are $\dot{u}\in H_{\rho,0}(\mathbb{R};L^{2,1}(\Omega)),\dot{\psi},\sigma\in H_{\rho,0}(\mathbb{R};L^{2,2}(\Omega)),\tau\in H_{\rho,0}(\mathbb{R};\mathrm{sym}[L^{2,2}(\Omega)])$
and $\mu\in H_{\rho,0}(\mathbb{R};L^{2,3}(\Omega)).$ The equations
(\ref{eq:dyn_micro_med}) are completed by the relations 
\begin{equation}
\left(\begin{array}{c}
\tau\\
\sigma\\
\mu
\end{array}\right)=\left(\begin{array}{ccc}
C_{0} & G_{0} & F_{0}\\
G_{0}^{*} & C_{1} & D_{0}\\
F_{0}^{*} & D_{0}^{*} & C_{2}
\end{array}\right)\left(\begin{array}{c}
\epsilon\\
\gamma\\
\kappa
\end{array}\right)\in H_{\rho,0}(\mathbb{R};\mathrm{sym}\left[L^{2,2}\left(\Omega\right)\right]\oplus L^{2,2}\left(\Omega\right)\oplus L^{2,3}\left(\Omega\right)),\label{eq:pre-matter}
\end{equation}

with 
\[
\epsilon\coloneqq\iota_{\mathrm{sym}}^{\ast}\nabla u,\:\kappa\coloneqq\nabla\psi,\:\gamma\coloneqq\nabla u-\psi
\]
for suitable bounded linear operators $C_{0},G_{0},F_{0},C_{1},D_{0},C_{2}$
only depending on the spatial variables%
\footnote{The dynamic equations already indicate that $\iota_{\mathrm{sym}}\tau+\sigma$
is most likely a more suitable unknown rather than both $\tau$ and
$\sigma$. Note that from $\iota_{\mathrm{sym}}\tau+\sigma$ and $\iota_{\mathrm{sym}}\sigma$
both can, however, be recovered. Since $\tau\in\mathrm{sym}\left[L^{2,2}\left(\Omega\right)\right]$
we have
\begin{align*}
\sigma & =\iota_{\mathrm{sym}}\iota_{\mathrm{sym}}^{*}\sigma+\mathrm{skew}\left(\iota_{\mathrm{sym}}\tau+\sigma\right),\\
\tau & =\iota_{\mathrm{sym}}^{*}\left(\iota_{\mathrm{sym}}\tau+\sigma\right)-\iota_{\mathrm{sym}}^{*}\sigma.
\end{align*}
} and $u=\partial_{0}^{-1}\dot{u}$, $\psi=\partial_{0}^{-1}\dot{\psi}$.
We want to give well-posedness conditions for the operators involved
of finding $(\dot{u},\dot{\psi})\in H_{\rho,0}(\mathbb{R};L^{2,1}(\Omega)\oplus L^{2,2}(\Omega))$
solving (\ref{eq:dyn_micro_med}) subject to (\ref{eq:pre-matter}).
In order to do so, we reformulate the problem. For this we consider
the block operator matrix
\begin{align*}
 & \left(\begin{array}{ccc}
\iota_{\mathrm{sym}}C_{0}\iota_{\mathrm{sym}}^{*}+G_{0}^{*}\iota_{\mathrm{sym}}^{*}+\iota_{\mathrm{sym}}G_{0}+C_{1} & \quad\iota_{\mathrm{sym}}F_{0}+D_{0} & \quad\iota_{\mathrm{sym}}G_{0}\iota_{\mathrm{sym}}+C_{1}\iota_{\mathrm{sym}}\\
F_{0}^{*}\iota_{\mathrm{sym}}^{*}+D_{0}^{*} & C_{2} & D_{0}^{*}\iota_{\mathrm{sym}}\\
\iota_{\mathrm{sym}}^{*}G_{0}^{*}\iota_{\mathrm{sym}}^{*}+\iota_{\mathrm{sym}}^{*}C_{1} & \iota_{\mathrm{sym}}^{*}D_{0} & \iota_{\mathrm{sym}}^{*}C_{1}\iota_{\mathrm{sym}}
\end{array}\right)\\
 & \in L(L^{2,2}\left(\Omega\right)\oplus L^{2,3}\left(\Omega\right)\oplus\mathrm{sym}\left[L^{2,2}\left(\Omega\right)\right]),
\end{align*}
which we assume to be continuously invertible throughout. We denote
its inverse by $W=\left(\begin{array}{ccc}
W_{00} & W_{01} & W_{02}\\
W_{01}^{\ast} & W_{11} & W_{21}\\
W_{02}^{\ast} & W_{21}^{\ast} & W_{22}
\end{array}\right)$.
\begin{thm}
\label{thm:micro-med-reform}Let $\rho\in\mathbb{R}_{>0}$. Then $(\dot{u},\dot{\psi})\in H_{\rho,0}(\mathbb{R};L^{2,1}(\Omega)\oplus L^{2,2}(\Omega))$
is a solution of (\ref{eq:dyn_micro_med}) subject to (\ref{eq:pre-matter})
if and only if $\left(\begin{array}{c}
\dot{u}\\
\dot{\psi}\\
\iota_{\mathrm{sym}}\tau+\sigma\\
\mu\\
\iota_{\mathrm{sym}}^{*}\sigma
\end{array}\right)\in H_{\rho,0}(\mathbb{R};L^{2,1}\left(\Omega\right)\oplus L^{2,2}\left(\Omega\right)\oplus L^{2,2}\left(\Omega\right)\oplus L^{2,3}\left(\Omega\right)\oplus\mathrm{sym}\left[L^{2,2}\left(\Omega\right)\right])$ solves
\[
\left(\partial_{0}M_{0}+M_{1}+A\right)\left(\begin{array}{c}
\dot{u}\\
\dot{\psi}\\
\iota_{\mathrm{sym}}\tau+\sigma\\
\mu\\
\iota_{\mathrm{sym}}^{*}\sigma
\end{array}\right)=\left(\begin{array}{c}
f\\
h\\
0\\
0\\
0
\end{array}\right),
\]
where 
\[
M_{0}=\left(\begin{array}{ccc}
\rho_{0} & 0 & \begin{array}{ccc}
0 & 0 & 0\end{array}\\
0 & \rho_{2} & \begin{array}{ccc}
0 & 0 & 0\end{array}\\
\begin{array}{c}
0\\
0\\
0
\end{array} & \begin{array}{c}
0\\
0\\
0
\end{array} & W
\end{array}\right),\quad M_{1}\coloneqq\left(\begin{array}{ccccc}
0 & 0 & 0 & 0 & 0\\
0 & 0 & -\mathrm{skew} & 0 & -\iota_{\mathrm{sym}}\\
0 & \mathrm{skew} & 0 & 0 & 0\\
0 & 0 & 0 & 0 & 0\\
0 & \iota_{\mathrm{sym}}^{*} & 0 & 0 & 0
\end{array}\right)
\]
and
\begin{align*}
A & \coloneqq\left(\begin{array}{ccccc}
0 & 0 & -\nabla\cdot & 0 & 0\\
0 & 0 & 0 & -\nabla\cdot & 0\\
-\interior\nabla & 0 & 0 & 0 & 0\\
0 & -\interior\nabla & 0 & 0 & 0\\
0 & 0 & 0 & 0 & 0
\end{array}\right).
\end{align*}
\end{thm}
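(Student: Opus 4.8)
The statement is an algebraic reformulation, so the plan is to verify that, under the change of the stress variables $(\tau,\sigma,\mu)\mapsto(\iota_{\mathrm{sym}}\tau+\sigma,\,\mu,\,\iota_{\mathrm{sym}}^{*}\sigma)$, the five rows of $(\partial_{0}M_{0}+M_{1}+A)\,\cdot=F$ are equivalent to the two dynamic equations (\ref{eq:dyn_micro_med}) together with the constitutive relation (\ref{eq:pre-matter}). First I would record that this change of variables is a bijection: since $\tau\in\mathrm{sym}[L^{2,2}(\Omega)]$ and since $\iota_{\mathrm{sym}}\iota_{\mathrm{sym}}^{*}=\mathrm{sym}$, $\iota_{\mathrm{sym}}^{*}\iota_{\mathrm{sym}}=1$ and $\mathrm{sym}+\mathrm{skew}=1$, one recovers $\sigma$ and $\tau$ from $\iota_{\mathrm{sym}}\tau+\sigma$ and $\iota_{\mathrm{sym}}^{*}\sigma$ through the two formulas in the footnote to (\ref{eq:pre-matter}). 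Hence both solution concepts encode the same data and it suffices to match the system row by row; both directions of the equivalence then follow from this matching together with the bounded invertibility of $\partial_{0}$ on $H_{\rho,0}(\mathbb{R},H)$.

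The first two rows are immediate. Row one reads literally $\partial_{0}\rho_{0}\dot{u}-\nabla\cdot(\iota_{\mathrm{sym}}\tau+\sigma)=f$, the first line of (\ref{eq:dyn_micro_med}). In row two the entries of $M_{1}$ produce $-\mathrm{skew}(\iota_{\mathrm{sym}}\tau+\sigma)-\iota_{\mathrm{sym}}\iota_{\mathrm{sym}}^{*}\sigma$, and the reconstruction formula $\sigma=\iota_{\mathrm{sym}}\iota_{\mathrm{sym}}^{*}\sigma+\mathrm{skew}(\iota_{\mathrm{sym}}\tau+\sigma)$ collapses this to $-\sigma$; adding the $-\nabla\cdot\mu$ contributed by $A$ yields exactly the second line of (\ref{eq:dyn_micro_med}).

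The heart of the argument is the identification of rows three, four and five with the material law. Applying the bounded operator $\partial_{0}^{-1}$ to these rows and using $u=\partial_{0}^{-1}\dot{u}$, $\psi=\partial_{0}^{-1}\dot{\psi}$ and the commutation of $\interior\nabla$ with $\partial_{0}^{-1}$, I would rewrite them as
\[
W\begin{pmatrix}\iota_{\mathrm{sym}}\tau+\sigma\\\mu\\\iota_{\mathrm{sym}}^{*}\sigma\end{pmatrix}=\begin{pmatrix}\nabla u-\mathrm{skew}\,\psi\\\nabla\psi\\-\iota_{\mathrm{sym}}^{*}\psi\end{pmatrix}=\begin{pmatrix}\iota_{\mathrm{sym}}\epsilon+\mathrm{skew}\,\gamma\\\kappa\\\iota_{\mathrm{sym}}^{*}\gamma-\epsilon\end{pmatrix},
\]
where the second equality uses the kinematic relations $\epsilon=\iota_{\mathrm{sym}}^{*}\nabla u$, $\kappa=\nabla\psi$, $\gamma=\nabla u-\psi$ together with $\nabla u=\iota_{\mathrm{sym}}\epsilon+\mathrm{skew}\nabla u$ and $\mathrm{skew}\nabla u=\mathrm{skew}\,\gamma+\mathrm{skew}\,\psi$. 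Multiplying by $W^{-1}$, which is precisely the block operator matrix displayed just before the theorem, reduces the claim to the purely algebraic identity that this block matrix applied to the strain triple on the right returns $(\iota_{\mathrm{sym}}\tau+\sigma,\,\mu,\,\iota_{\mathrm{sym}}^{*}\sigma)$ whenever $(\tau,\sigma,\mu)$ is given by (\ref{eq:pre-matter}). I expect this componentwise cancellation to be the main, if routine, obstacle: using $\iota_{\mathrm{sym}}^{*}\iota_{\mathrm{sym}}=1$, $\iota_{\mathrm{sym}}\iota_{\mathrm{sym}}^{*}=\mathrm{sym}$, $\iota_{\mathrm{sym}}^{*}\mathrm{skew}=0$ and $\mathrm{sym}+\mathrm{skew}=1$, all cross terms vanish and one is left with $\iota_{\mathrm{sym}}\tau+\sigma=(\iota_{\mathrm{sym}}C_{0}+G_{0}^{*})\epsilon+(\iota_{\mathrm{sym}}G_{0}+C_{1})\gamma+(\iota_{\mathrm{sym}}F_{0}+D_{0})\kappa$, $\mu=F_{0}^{*}\epsilon+D_{0}^{*}\gamma+C_{2}\kappa$ and $\iota_{\mathrm{sym}}^{*}\sigma=\iota_{\mathrm{sym}}^{*}(G_{0}^{*}\epsilon+C_{1}\gamma+D_{0}\kappa)$, which is just (\ref{eq:pre-matter}) in the new variables.

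Finally I would assemble the two implications. Reading the computation forwards turns (\ref{eq:dyn_micro_med})--(\ref{eq:pre-matter}) into the evolutionary system; reading the three equalities above backwards (applying $\partial_{0}$ and inserting the kinematic relations) recovers (\ref{eq:pre-matter}) from a solution of the evolutionary system, while rows one and two give back (\ref{eq:dyn_micro_med}). Causality of $\partial_{0}^{-1}$ ensures that no integration constants intervene, so the correspondence is exact. The one point needing care is regularity: to justify differentiation and the application of $A$ I would argue first for $f,h\in D(\partial_{0})$ with data in the domains of the spatial operators, invoking Remark \ref{rem:regularity}, and then pass to general $f,h\in H_{\rho,0}(\mathbb{R},H)$ by density and the continuity of $\partial_{0}^{-1}$ and of the solution operator from Theorem \ref{linear-evolutionary-solution-theory}, exactly as in the proof of Theorem \ref{thm:energy balance}.
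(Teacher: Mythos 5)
Your proposal is correct and follows essentially the same route as the paper's proof: the same change of stress variables $(\tau,\sigma,\mu)\mapsto(\iota_{\mathrm{sym}}\tau+\sigma,\mu,\iota_{\mathrm{sym}}^{*}\sigma)$ with its inverse from the footnote, the row-by-row matching in which rows one and two reproduce (\ref{eq:dyn_micro_med}), and the same key identity
\[
W\left(\begin{array}{c}\iota_{\mathrm{sym}}\tau+\sigma\\ \mu\\ \iota_{\mathrm{sym}}^{*}\sigma\end{array}\right)=\left(\begin{array}{c}\iota_{\mathrm{sym}}\epsilon+\mathrm{skew}\gamma\\ \kappa\\ \iota_{\mathrm{sym}}^{*}\gamma-\epsilon\end{array}\right),
\]
which is precisely equation (\ref{eq:W}) of the paper, followed by application of $\partial_{0}$ respectively $\partial_{0}^{-1}$ for the two implications. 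The only organizational difference is how the identity is verified: you expand the displayed block matrix $W^{-1}$ against the transformed strain triple and cancel cross terms using $\iota_{\mathrm{sym}}^{*}\iota_{\mathrm{sym}}=1$, $\iota_{\mathrm{sym}}\iota_{\mathrm{sym}}^{*}=\mathrm{sym}$, $\iota_{\mathrm{sym}}^{*}\mathrm{skew}=0$, whereas the paper obtains the same result by inserting the factorization $\left(\begin{array}{ccc}\iota_{\mathrm{sym}}^{*} & 0 & 0\\ 1 & 0 & \iota_{\mathrm{sym}}\\ 0 & 1 & 0\end{array}\right)\left(\begin{array}{ccc}\iota_{\mathrm{sym}} & \mathrm{skew} & 0\\ 0 & 0 & 1\\ -1 & \iota_{\mathrm{sym}}^{*} & 0\end{array}\right)=1$ between the constitutive matrix and the strain vector; these are the same computation arranged in opposite directions, and your cancellations do check out.

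One concrete caveat about your closing paragraph: the density/regularization step is unnecessary and, as you phrase it, circular. Theorem \ref{thm:micro-med-reform} assumes only continuous invertibility of $W^{-1}$, not strict positive definiteness of $\rho_{0},\rho_{2},W$, so the solution operator of Theorem \ref{linear-evolutionary-solution-theory} (and with it Remark \ref{rem:regularity}) is not available here --- well-posedness is asserted only in the subsequent corollary, which relies on the very reformulation you are proving. No approximation is needed: every manipulation in the equivalence is multiplication by a bounded, time-independent operator ($W$, $W^{-1}$, the blocks of $M_{1}$) or by $\partial_{0}^{\pm1}$, which is a topological isomorphism of $H_{\rho,0}(\mathbb{R},H)$ for $\rho>0$, and under the paper's standing convention of omitting the closure bar (justified via Sobolev lattices) the unbounded operators $A$ and $\partial_{0}$ never have to be applied pointwise. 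Dropping that final paragraph leaves a complete proof matching the paper's.
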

\begin{proof}
Before we show the assertion observe that for given 
\[
\left(\begin{array}{c}
\tau\\
\sigma\\
\mu
\end{array}\right)\in\mathrm{sym}\left[L^{2,2}\left(\Omega\right)\right]\oplus L^{2,2}\left(\Omega\right)\oplus L^{2,3}\left(\Omega\right),
\]
we get that
\[
\left(\begin{array}{c}
\iota_{\mathrm{sym}}\tau+\sigma\\
\mu\\
\iota_{\mathrm{sym}}^{*}\sigma
\end{array}\right)=\left(\begin{array}{ccc}
\iota_{\mathrm{sym}} & 1 & 0\\
0 & 0 & 1\\
0 & \iota_{\mathrm{sym}}^{*} & 0
\end{array}\right)\left(\begin{array}{c}
\tau\\
\sigma\\
\mu
\end{array}\right)\in L^{2,2}\left(\Omega\right)\oplus L^{2,3}\left(\Omega\right)\oplus\mathrm{sym}\left[L^{2,2}\left(\Omega\right)\right].
\]
Realizing that $\left(\begin{array}{ccc}
\iota_{\mathrm{sym}}^{*} & 0 & 0\\
1 & 0 & \iota_{\mathrm{sym}}\\
0 & 1 & 0
\end{array}\right)\left(\begin{array}{ccc}
\iota_{\mathrm{sym}} & \mathrm{skew} & 0\\
0 & 0 & 1\\
-1 & \iota_{\mathrm{sym}}^{*} & 0
\end{array}\right)=1$ and invoking $\left(\begin{array}{c}
\tau\\
\sigma\\
\mu
\end{array}\right)=\left(\begin{array}{ccc}
C_{0} & G_{0} & F_{0}\\
G_{0}^{*} & C_{1} & D_{0}\\
F_{0}^{*} & D_{0}^{*} & C_{2}
\end{array}\right)\left(\begin{array}{c}
\epsilon\\
\gamma\\
\kappa
\end{array}\right)$ for some $\epsilon,\gamma$ and $\kappa$, we arrive at
\begin{align*}
\left(\begin{array}{c}
\iota_{\mathrm{sym}}\tau+\sigma\\
\mu\\
\iota_{\mathrm{sym}}^{*}\sigma
\end{array}\right) & =\left(\begin{array}{ccc}
\iota_{\mathrm{sym}} & 1 & 0\\
0 & 0 & 1\\
0 & \iota_{\mathrm{sym}}^{*} & 0
\end{array}\right)\left(\begin{array}{c}
\tau\\
\sigma\\
\mu
\end{array}\right),\\
 & =\left(\left(\begin{array}{ccc}
\iota_{\mathrm{sym}} & 1 & 0\\
0 & 0 & 1\\
0 & \iota_{\mathrm{sym}}^{*} & 0
\end{array}\right)\left(\begin{array}{ccc}
C_{0} & G_{0} & F_{0}\\
G_{0}^{*} & C_{1} & D_{0}\\
F_{0}^{*} & D_{0}^{*} & C_{2}
\end{array}\right)\left(\begin{array}{ccc}
\iota_{\mathrm{sym}}^{*} & 0 & 0\\
1 & 0 & \iota_{\mathrm{sym}}\\
0 & 1 & 0
\end{array}\right)\right)\left(\left(\begin{array}{ccc}
\iota_{\mathrm{sym}} & \mathrm{skew} & 0\\
0 & 0 & 1\\
-1 & \iota_{\mathrm{sym}}^{*} & 0
\end{array}\right)\left(\begin{array}{c}
\epsilon\\
\gamma\\
\kappa
\end{array}\right)\right).
\end{align*}
From
\begin{align*}
 & \left(\begin{array}{ccc}
\iota_{\mathrm{sym}} & 1 & 0\\
0 & 0 & 1\\
0 & \iota_{\mathrm{sym}}^{*} & 0
\end{array}\right)\left(\begin{array}{ccc}
C_{0} & G_{0} & F_{0}\\
G_{0}^{*} & C_{1} & D_{0}\\
F_{0}^{*} & D_{0}^{*} & C_{2}
\end{array}\right)\left(\begin{array}{ccc}
\iota_{\mathrm{sym}}^{*} & 0 & 0\\
1 & 0 & \iota_{\mathrm{sym}}\\
0 & 1 & 0
\end{array}\right)=\\
 & =\left(\begin{array}{ccc}
\iota_{\mathrm{sym}}C_{0}+G_{0}^{*} & \;\iota_{\mathrm{sym}}G_{0}+C_{1} & \;\iota_{\mathrm{sym}}F_{0}+D_{0}\\
F_{0}^{*} & D_{0}^{*} & C_{2}\\
\iota_{\mathrm{sym}}^{*}G_{0}^{*} & \iota_{\mathrm{sym}}^{*}C_{1} & \iota_{\mathrm{sym}}^{*}D_{0}
\end{array}\right)\left(\begin{array}{ccc}
\iota_{\mathrm{sym}}^{*} & 0 & 0\\
1 & 0 & \iota_{\mathrm{sym}}\\
0 & 1 & 0
\end{array}\right)\\
 & =\left(\begin{array}{ccc}
\iota_{\mathrm{sym}}C_{0}\iota_{\mathrm{sym}}^{*}+G_{0}^{*}\iota_{\mathrm{sym}}^{*}+\iota_{\mathrm{sym}}G_{0}+C_{1} & \quad\iota_{\mathrm{sym}}F_{0}+D_{0} & \quad\iota_{\mathrm{sym}}G_{0}\iota_{\mathrm{sym}}+C_{1}\iota_{\mathrm{sym}}\\
F_{0}^{*}\iota_{\mathrm{sym}}^{*}+D_{0}^{*} & C_{2} & D_{0}^{*}\iota_{\mathrm{sym}}\\
\iota_{\mathrm{sym}}^{*}G_{0}^{*}\iota_{\mathrm{sym}}^{*}+\iota_{\mathrm{sym}}^{*}C_{1} & \iota_{\mathrm{sym}}^{*}D_{0} & \iota_{\mathrm{sym}}^{*}C_{1}\iota_{\mathrm{sym}}
\end{array}\right),
\end{align*}
we get that 
\begin{align}
W\left(\begin{array}{c}
\iota_{\mathrm{sym}}\tau+\sigma\\
\mu\\
\iota_{\mathrm{sym}}^{*}\sigma
\end{array}\right) & =\left(\begin{array}{ccc}
\iota_{\mathrm{sym}} & \mathrm{skew} & 0\\
0 & 0 & 1\\
-1 & \iota_{\mathrm{sym}}^{*} & 0
\end{array}\right)\left(\begin{array}{c}
\epsilon\\
\gamma\\
\kappa
\end{array}\right)\nonumber \\
 & =\left(\begin{array}{c}
\iota_{\mathrm{sym}}\epsilon+\mathrm{skew}\gamma\\
\kappa\\
\iota_{\mathrm{sym}}^{*}\gamma-\epsilon
\end{array}\right).\label{eq:W}
\end{align}
Now, let $(\dot{u},\dot{\psi})\in H_{\rho,0}(\mathbb{R};L^{2,1}(\Omega)\oplus L^{2,2}(\Omega))$
be a solution of (\ref{eq:dyn_micro_med}) subject to (\ref{eq:pre-matter}).
Applying $\partial_{0}$ to equation (\ref{eq:W}), using the time-independence
of $W$ and the definition of $\epsilon,\gamma$ and $\kappa$, we
get that
\[
\partial_{0}W\left(\begin{array}{c}
\iota_{\mathrm{sym}}\tau+\sigma\\
\mu\\
\iota_{\mathrm{sym}}^{*}\sigma
\end{array}\right)+\left(\begin{array}{c}
\mathrm{skew}\dot{\psi}\\
0\\
\iota_{\mathrm{sym}}^{*}\dot{\psi}
\end{array}\right)=\left(\begin{array}{c}
\nabla\dot{u}\\
\nabla\dot{\psi}\\
0
\end{array}\right).
\]
The latter equation together with (\ref{eq:dyn_micro_med}) yields
a solution of $(\partial_{0}M_{0}+M_{1}+A)U=\left(\begin{array}{c}
f\\
h\\
0\\
0\\
0
\end{array}\right).$ On the other hand, a solution of the latter equation gives equation
(\ref{eq:pre-matter}) by integrating the last three rows of the system,
i.e., by multiplying the last three rows by $\partial_{0}^{-1}$.\end{proof}
\begin{cor}
Let $\rho\in\mathbb{R}_{>0}$. The equation (\ref{eq:dyn_micro_med})
subject to (\ref{eq:pre-matter}) is well-posed in the sense that
for any $(f,h)\in H_{\rho,0}(\mathbb{R};L^{2,1}(\Omega)\oplus L^{2,2}(\Omega))$
there exists uniquely determined $(\dot{u},\dot{\psi})\in H_{\rho,0}(\mathbb{R};L^{2,1}(\Omega)\oplus L^{2,2}(\Omega))$
depending continuously on $(f,h)$ if $\rho_{0},\rho_{2}$ and $W$
are strictly positive definite. Moreover, in this case the energy
balance equality
\begin{align*}
 & \frac{1}{2}\left\langle \left(\begin{array}{c}
\dot{u}\\
\dot{\psi}\\
\iota_{\mathrm{sym}}\tau+\sigma\\
\mu\\
\iota_{\mathrm{sym}}^{\ast}\sigma
\end{array}\right)\left|\left(\begin{array}{c}
\rho_{0}\dot{u}\\
\rho_{2}\dot{\psi}\\
\iota_{\mathrm{sym}}\epsilon+\mathrm{skew}\gamma\\
\kappa\\
\iota_{\mathrm{sym}}^{\ast}\gamma-\epsilon
\end{array}\right)\right.\right\rangle (b)\\
 & =\frac{1}{2}\left\langle \left(\begin{array}{c}
\dot{u}\\
\dot{\psi}\\
\iota_{\mathrm{sym}}\tau+\sigma\\
\mu\\
\iota_{\mathrm{sym}}^{\ast}\sigma
\end{array}\right)\left|\left(\begin{array}{c}
\rho_{0}\dot{u}\\
\rho_{2}\dot{\psi}\\
\iota_{\mathrm{sym}}\epsilon+\mathrm{skew}\gamma\\
\kappa\\
\iota_{\mathrm{sym}}^{\ast}\gamma-\epsilon
\end{array}\right)\right.\right\rangle (a)+\intop_{a}^{b}\left\langle \left.\left(\begin{array}{c}
\dot{u}\\
\dot{\psi}
\end{array}\right)\right|\left(\begin{array}{c}
f\\
h
\end{array}\right)\right\rangle 
\end{align*}

holds for almost every $a,b\in\mathbb{R}$.\end{cor}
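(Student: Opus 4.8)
The plan is to derive both assertions from the reformulation in Theorem \ref{thm:micro-med-reform}, which identifies the solutions of (\ref{eq:dyn_micro_med}) subject to (\ref{eq:pre-matter}) with the solutions of the evolutionary equation $(\partial_0 M_0 + M_1 + A)U = (f,h,0,0,0)^{\top}$ for the explicit $M_0,M_1,A$ given there, and then to invoke the abstract solution theory (Theorem \ref{linear-evolutionary-solution-theory}) and the abstract energy balance (Theorem \ref{thm:energy balance}). Since the reformulated operator carries no $\partial_0^{-1}$-term, we are throughout in the case $M_2=0$.

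First I would check that $M_0,M_1,A$ meet the structural hypotheses (\ref{eq:pos-def}). The operator $M_0=\mathrm{diag}(\rho_0,\rho_2,W)$ is block diagonal, hence selfadjoint and strictly positive definite exactly because $\rho_0,\rho_2$ and $W$ are assumed to be so; this is the only place the corollary's hypothesis is used. For $M_1$ a direct inspection of its off-diagonal entries ($-\mathrm{skew},-\iota_{\mathrm{sym}}$ against $\mathrm{skew},\iota_{\mathrm{sym}}^{*}$) gives $M_1^{*}=-M_1$, using that $\mathrm{skew}$ is an orthogonal projector. For $A$ I would regroup the five components into a ``velocity'' block $(\dot u,\dot\psi)$, a ``stress'' block $(\iota_{\mathrm{sym}}\tau+\sigma,\mu)$ and the static component $\iota_{\mathrm{sym}}^{*}\sigma$; with $G:=\mathrm{diag}(\interior\nabla,\interior\nabla)$ one has $G^{*}=\mathrm{diag}(-\nabla\cdot,-\nabla\cdot)$, so that on velocity$\oplus$stress the operator $A$ takes the canonical form $\left(\begin{smallmatrix}0 & -\tilde G^{*}\\ \tilde G & 0\end{smallmatrix}\right)$ with $\tilde G=-G$, while it acts as $0$ on the static component. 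As the orthogonal sum of a skew-selfadjoint operator and the zero operator, $A$ is skew-selfadjoint. Because $M_2=0$ is non-negative, Theorem \ref{linear-evolutionary-solution-theory} then yields, for \emph{every} $\rho>0$, a unique $U\in H_{\rho,0}(\mathbb{R},H)$ depending continuously (and causally) on the right-hand side.

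Next I would transfer well-posedness back to the pair $(\dot u,\dot\psi)$. Existence together with the reverse implication of Theorem \ref{thm:micro-med-reform} produces a solution of (\ref{eq:dyn_micro_med})--(\ref{eq:pre-matter}); for uniqueness, two such solutions yield, via the forward implication, two solutions of the evolutionary equation, which must coincide by abstract uniqueness, whence their first two components agree. Continuous dependence follows because $(\dot u,\dot\psi)$ are the first two components of $U$ and the norm of $(f,h,0,0,0)^{\top}$ equals that of $(f,h)$, so $\|(\dot u,\dot\psi)\|\le\|U\|\le\|(\partial_0 M_0+M_1+A)^{-1}\|\,\|(f,h)\|$. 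For the energy balance I would apply Theorem \ref{thm:energy balance} to the reformulated equation; with $M_2=0$ its conclusion collapses to $\tfrac12\langle U|M_0U\rangle_0(b)=\tfrac12\langle U|M_0U\rangle_0(a)+\int_a^b\Re\langle U|F\rangle_0$. The only remaining computation is to identify $M_0U$: by (\ref{eq:W}) the operator $W$ applied to $(\iota_{\mathrm{sym}}\tau+\sigma,\mu,\iota_{\mathrm{sym}}^{*}\sigma)^{\top}$ equals $(\iota_{\mathrm{sym}}\epsilon+\mathrm{skew}\,\gamma,\kappa,\iota_{\mathrm{sym}}^{*}\gamma-\epsilon)^{\top}$, so $M_0U$ is precisely the second vector appearing in the stated balance, while $\langle U|F\rangle_0=\langle(\dot u,\dot\psi)|(f,h)\rangle$ since the last three entries of $F$ vanish. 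Substituting gives the claimed identity.

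I do not expect a genuine obstacle, since every ingredient is a previously established result; the only points demanding care are the verification that $A$ is skew-selfadjoint (matching the block sign pattern with the canonical form and confirming that the decoupled static component does not spoil this) and the clean identification of $M_0U$ through (\ref{eq:W}) in the energy step. The one truly \emph{hypothetical} ingredient, strict positive definiteness of $M_0$, is exactly the stated assumption on $\rho_0,\rho_2,W$, so no additional positivity estimate is needed.
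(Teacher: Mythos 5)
Your proposal is correct and follows exactly the paper's route: the paper's own proof is a one-line appeal to the reformulation of Theorem \ref{thm:micro-med-reform} combined with Theorem \ref{linear-evolutionary-solution-theory} for well-posedness and Theorem \ref{thm:energy balance} for the energy balance, with $M_{2}=0$. Your additional verifications (strict positive definiteness of $M_{0}$, skew-selfadjointness of $M_{1}$ and $A$ via the block structure, and the identification of $M_{0}U$ through (\ref{eq:W})) simply make explicit what the paper leaves implicit.
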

\begin{proof}
With the help of the reformulation done in Theorem \ref{thm:micro-med-reform},
the proof rests on Theorem \ref{linear-evolutionary-solution-theory}
for the solution theory and Theorem \ref{thm:energy balance} for
the energy balance.\end{proof}
\begin{rem}
We elaborate the condition of positive definiteness of $W$ more closely:
First we note that $W$ is strictly positive definite if and only
if $W^{-1}$ is strictly positive definite.\\
If in particular
\[
D_{0}=0,\: F_{0}=0
\]
we merely have to consider the positive definiteness of
\[
\left(\begin{array}{ccc}
\iota_{\mathrm{sym}}C_{0}\iota_{\mathrm{sym}}^{*}+G_{0}^{*}\iota_{\mathrm{sym}}^{*}+\iota_{\mathrm{sym}}G_{0}+C_{1} & \quad\iota_{\mathrm{sym}}G_{0}\iota_{\mathrm{sym}}+C_{1}\iota_{\mathrm{sym}} & 0\\
\iota_{\mathrm{sym}}^{*}G_{0}^{*}\iota_{\mathrm{sym}}^{*}+\iota_{\mathrm{sym}}^{*}C_{1} & \iota_{\mathrm{sym}}^{*}C_{1}\iota_{\mathrm{sym}} & 0\\
0 & 0 & C_{2}
\end{array}\right),
\]
which holds if $C_{2}$ is strictly positive definite and
\[
\left(\begin{array}{cc}
\iota_{\mathrm{sym}}C_{0}\iota_{\mathrm{sym}}^{*}+G_{0}^{*}\iota_{\mathrm{sym}}^{*}+\iota_{\mathrm{sym}}G_{0}+C_{1} & \quad\iota_{\mathrm{sym}}G_{0}\iota_{\mathrm{sym}}+C_{1}\iota_{\mathrm{sym}}\\
\iota_{\mathrm{sym}}^{*}G_{0}^{*}\iota_{\mathrm{sym}}^{*}+\iota_{\mathrm{sym}}^{*}C_{1} & \iota_{\mathrm{sym}}^{*}C_{1}\iota_{\mathrm{sym}}
\end{array}\right)
\]
is strictly positive definite. The latter holds if $\iota_{\mathrm{sym}}^{*}C_{1}\iota_{\mathrm{sym}}$
and 
\begin{align}
 & \iota_{\mathrm{sym}}C_{0}\iota_{\mathrm{sym}}^{*}+G_{0}^{*}\iota_{\mathrm{sym}}^{*}+\iota_{\mathrm{sym}}G_{0}+C_{1}\label{eq:upleft}\\
 & -\left(\iota_{\mathrm{sym}}G_{0}\iota_{\mathrm{sym}}+C_{1}\iota_{\mathrm{sym}}\right)\left(\iota_{\mathrm{sym}}^{*}C_{1}\iota_{\mathrm{sym}}\right)^{-1}\left(\iota_{\mathrm{sym}}^{*}G_{0}^{*}\iota_{\mathrm{sym}}^{*}+\iota_{\mathrm{sym}}^{*}C_{1}\right)\nonumber 
\end{align}
is strictly positive definite. 

In the isotropic case we do have $D_{0}=0$ and $F_{0}=0$ and moreover
\begin{align*}
C_{0}= & \iota_{\mathrm{sym}}^{*}\left(2\mu_{0}\mathrm{sym}_{0}+\left(3\lambda_{0}+2\mu_{0}\right)\mathbb{P}\right)\iota_{\mathrm{sym}},\\
C_{1}= & 2\mu_{1}\mathrm{sym}_{0}+2\alpha_{1}\mathrm{skew}+\left(3\lambda_{1}+2\mu_{1}\right)\mathbb{P},\\
G_{0}= & \iota_{\mathrm{sym}}^{*}\left(2\omega_{0}\mathrm{sym}_{0}+\left(3\beta_{0}+2\omega_{0}\right)\mathbb{P}\right).
\end{align*}
for suitable scalars $\mu_{0},\lambda_{0},\beta_{0},\omega_{0},\mu_{1},\lambda_{1},\alpha_{1}\in\mathbb{R}.$
With this we find that
\begin{align*}
\iota_{\mathrm{sym}}G_{0} & =2\omega_{0}\mathrm{sym}_{0}+\left(3\beta_{0}+2\omega_{0}\right)\mathbb{P}=G_{0}^{*}\iota_{\mathrm{sym}}^{*},\\
\iota_{\mathrm{sym}}G_{0}\iota_{\mathrm{sym}} & =\left(2\omega_{0}\mathrm{sym}_{0}+\left(3\beta_{0}+2\omega_{0}\right)\mathbb{P}\right)\iota_{\mathrm{sym}}
\end{align*}
 and so
\begin{align*}
 & \iota_{\mathrm{sym}}C_{0}\iota_{\mathrm{sym}}^{*}+G_{0}^{*}\iota_{\mathrm{sym}}^{*}+\iota_{\mathrm{sym}}G_{0}+C_{1}\\
 & =2\left(\mu_{0}+\mu_{1}+2\omega_{0}\right)\mathrm{sym}_{0}+2\alpha_{1}\mathrm{skew}+\left(3\left(2\beta_{0}+\lambda_{0}+\lambda_{1}\right)+2\left(2\omega_{0}+\mu_{0}+\mu_{1}\right)\right)\mathbb{P}.
\end{align*}
Moreover, with 
\[
\mu_{1}>0,\:\lambda_{1}+\frac{2}{3}\mu_{1}>0,
\]
we find
\begin{align*}
\left(\iota_{\mathrm{sym}}^{*}C_{1}\iota_{\mathrm{sym}}\right)^{-1}= & \iota_{\mathrm{sym}}^{*}\left(\frac{1}{2\mu_{1}}\mathrm{sym}_{0}+\frac{1}{3\lambda_{1}+2\mu_{1}}\mathbb{P}\right)\iota_{\mathrm{sym}},\\
\iota_{\mathrm{sym}}G_{0}\iota_{\mathrm{sym}}+C_{1}\iota_{\mathrm{sym}}= & \left(2\left(\omega_{0}+\mu_{1}\right)\mathrm{sym}_{0}+\left(3\left(\beta_{0}+\lambda_{1}\right)+2\left(\omega_{0}+\mu_{1}\right)\right)\mathbb{P}\right)\iota_{\mathrm{sym}}
\end{align*}
and so
\begin{align*}
 & \left(\iota_{\mathrm{sym}}G_{0}\iota_{\mathrm{sym}}+C_{1}\iota_{\mathrm{sym}}\right)\left(\iota_{\mathrm{sym}}^{*}C_{1}\iota_{\mathrm{sym}}\right)^{-1}\left(\iota_{\mathrm{sym}}^{*}G_{0}^{*}\iota_{\mathrm{sym}}^{*}+\iota_{\mathrm{sym}}^{*}C_{1}\right)\\
 & =2\frac{\left(\omega_{0}+\mu_{1}\right)^{2}}{\mu_{1}}\mathrm{sym}_{0}+\frac{\left(3\left(\beta_{0}+\lambda_{1}\right)+2\left(\omega_{0}+\mu_{1}\right)\right)^{2}}{3\lambda_{1}+2\mu_{1}}\mathbb{P}.
\end{align*}
The positive definiteness of (\ref{eq:upleft}) now follows if 
\begin{align*}
 & 2\left(\mu_{0}+\mu_{1}+2\omega_{0}-\frac{\left(\omega_{0}+\mu_{1}\right)^{2}}{\mu_{1}}\right)\mathrm{sym}_{0}+2\alpha_{1}\mathrm{skew}\\
 & +\left(3\left(2\beta_{0}+\lambda_{0}+\lambda_{1}\right)+2\left(2\omega_{0}+\mu_{0}+\mu_{1}\right)-\frac{\left(3\left(\beta_{0}+\lambda_{1}\right)+2\left(\omega_{0}+\mu_{1}\right)\right)^{2}}{3\lambda_{1}+2\mu_{1}}\right)\mathbb{P}
\end{align*}
is positive, which is the case if
\end{rem}
\[
\alpha_{1}>0,
\]
\[
\mu_{1}\left(\mu_{0}+\mu_{1}+2\omega_{0}\right)-\left(\omega_{0}+\mu_{1}\right)^{2}=\mu_{1}\mu_{0}-\omega_{0}^{2}>0
\]
and
\begin{align*}
 & \left(3\lambda_{1}+2\mu_{1}\right)\left(3\left(2\beta_{0}+\lambda_{0}+\lambda_{1}\right)+2\left(2\omega_{0}+\mu_{0}+\mu_{1}\right)\right)-\left(3\left(\beta_{0}+\lambda_{1}\right)+2\left(\omega_{0}+\mu_{1}\right)\right)^{2}=\\
 & =\left(3\lambda_{1}+2\mu_{1}\right)^{2}+\left(3\lambda_{1}+2\mu_{1}\right)\left(2\left(3\beta_{0}+2\omega_{0}\right)+\left(3\lambda_{0}+2\mu_{0}\right)\right)-\left(\left(3\lambda_{1}+2\mu_{1}\right)+\left(3\beta_{0}+2\omega_{0}\right)\right)^{2}\\
 & =\left(3\lambda_{1}+2\mu_{1}\right)\left(3\lambda_{0}+2\mu_{0}\right)-\left(3\beta_{0}+2\omega_{0}\right)^{2}>0.
\end{align*}

\subsection{\label{sub:Micropolar-or-Cosserat}Micropolar or Cosserat Media}

The equations for Cosserat elasticity in the case of Dirichlet boundary
conditions read as
\begin{equation}
\left(\partial_{0}\left(\begin{array}{cccc}
\varrho_{0} & 0 & 0 & 0\\
0 & \varrho_{1} & 0 & 0\\
0 & 0 & C_{0}^{-1} & 0\\
0 & 0 & 0 & C_{1}^{-1}
\end{array}\right)+\left(\begin{array}{cccc}
0 & \quad0 & \quad0 & \quad0\\
0 & \quad0 & \quad-\Lambda^{*} & \quad0\\
0 & \quad\Lambda & \quad0 & \quad0\\
0 & \quad0 & \quad0 & \quad0
\end{array}\right)+\left(\begin{array}{cccc}
0 & 0 & -\nabla\cdot & 0\\
0 & 0 & 0 & -\nabla\cdot\\
-\interior\nabla & 0 & 0 & 0\\
0 & -\interior\nabla & 0 & 0
\end{array}\right)\right)\left(\begin{array}{c}
\mathbf{v}\\
\mathbf{w}\\
\sigma\\
\tau
\end{array}\right)=\left(\begin{array}{c}
\mathbf{f}\\
\mathbf{g}\\
0\\
0
\end{array}\right),\label{eq:Cosserat}
\end{equation}
where $\Lambda\coloneqq2\iota_{\wedge}*$, $\Lambda^{*}\coloneqq2*\iota_{\wedge}^{*}$
with $*$ denoting the Hodge star operator and $\iota_{\wedge}$ the
canonical embedding of alternating differential forms into the space
of $2$-tensors, i.e. 
\[
\iota_{\wedge}\left(dx^{i}\wedge dx^{j}\right)=\frac{1}{2}\left(dx^{i}\otimes dx^{j}-dx^{j}\otimes dx^{i}\right)
\]
for $i,j\in\{1,2,3\}.$ As underlying Hilbert space we have

\[
H=L^{2,1}\left(\Omega\right)\oplus L^{2,1}\left(\Omega\right)\oplus L^{2,2}\left(\Omega\right)\oplus L^{2,2}\left(\Omega\right).
\]

In Cartesian coordinates using an obvious suggestive matrix notation
we have
\begin{align*}
\Lambda^{*}\left(\left(\begin{array}{ccc}
\alpha_{11} & \alpha_{12} & \alpha_{13}\\
\alpha_{21} & \alpha_{22} & \alpha_{23}\\
\alpha_{31} & \alpha_{32} & \alpha_{33}
\end{array}\right)\right) & =\left(\begin{array}{c}
\alpha_{23}-\alpha_{32}\\
\alpha_{31}-\alpha_{13}\\
\alpha_{12}-\alpha_{21}
\end{array}\right)
\end{align*}
and
\[
-\Lambda\left(\left(\begin{array}{c}
\beta_{1}\\
\beta_{2}\\
\beta_{3}
\end{array}\right)\right)=\left(\begin{array}{ccc}
0 & -\beta_{3} & \beta_{2}\\
\beta_{3} & 0 & -\beta_{1}\\
-\beta_{2} & \beta_{1} & 0
\end{array}\right)\eqqcolon\beta\times
\]
for suitable $\alpha\in L^{2,2}(\Omega),\beta\in L^{2,1}(\Omega)$.
\begin{rem}
\label{rem:lambda_star_skew_unitary}On skew-symmetric tensors we
have that
\[
\Lambda^{*}\left(\left(\begin{array}{ccc}
0 & \alpha_{12} & -\alpha_{31}\\
-\alpha_{12} & 0 & \alpha_{23}\\
\alpha_{31} & \alpha_{-23} & 0
\end{array}\right)\right)=2\left(\begin{array}{c}
\alpha_{23}\\
\alpha_{31}\\
\alpha_{12}
\end{array}\right).
\]
Thus, we have that $\frac{1}{\sqrt{2}}\Lambda^{*}\iota_{\mathrm{skew}}:\iota_{\mathrm{skew}}^{*}\left[L^{2,2}\left(\Omega\right)\right]\to L^{2,1}\left(\Omega\right)$
defines a unitary transformation.
\end{rem}
Having stated the model for Cosserat elasticity (\ref{eq:Cosserat})
in the canonical form of Theorem \ref{linear-evolutionary-solution-theory},
we are in the position of formulating the well-posedness result as
follows.
\begin{thm}
\label{thm:WP_Cosserat}Let $\rho\in\mathbb{R}_{>0}$ and let $\rho_{0},\rho_{1},C_{0},C_{1}$
be selfadjoint and strictly positive definite operators in the respective
Hilbert spaces $L^{2,1}(\text{\ensuremath{\Omega})},L^{2,1}(\Omega),L^{2,2}(\Omega)$
and $L^{2,2}(\Omega)$. Then for every $(\mathbf{f},\mathbf{g})\in H_{\rho,0}(\mathbb{R};L^{2,1}(\Omega)\oplus L^{2,1}(\Omega))$
there exists uniquely determined $\mbox{(\ensuremath{\mathbf{v}},\ensuremath{\mathbf{w}},\ensuremath{\sigma},\ensuremath{\tau}})\in H_{\rho,0}(\mathbb{R};L^{2,1}(\Omega)\oplus L^{2,1}(\Omega)\oplus L^{2,2}(\Omega)\oplus L^{2,2}(\Omega))$
satisfying (\ref{eq:Cosserat}) depending continuously on $(\mathbf{f},\mathbf{g})$.
Moreover, the energy balance equality 
\[
\frac{1}{2}\left\langle \left(\begin{array}{c}
\mathbf{v}\\
\mathbf{w}\\
\sigma\\
\tau
\end{array}\right)\left|\left(\begin{array}{c}
\rho_{0}\mathbf{v}\\
\rho_{1}\mathbf{w}\\
C_{0}^{-1}\sigma\\
C_{1}^{-1}\tau
\end{array}\right)\right.\right\rangle \rangle(b)=\frac{1}{2}\left\langle \left(\begin{array}{c}
\mathbf{v}\\
\mathbf{w}\\
\sigma\\
\tau
\end{array}\right)\left|\left(\begin{array}{c}
\rho_{0}\mathbf{v}\\
\rho_{1}\mathbf{w}\\
C_{0}^{-1}\sigma\\
C_{1}^{-1}\tau
\end{array}\right)\right.\right\rangle (a)+\int_{a}^{b}\Re\left\langle \left(\begin{array}{c}
\mathbf{v}\\
\mathbf{w}
\end{array}\right)\left|\left(\begin{array}{c}
\mathbf{f}\\
\mathbf{g}
\end{array}\right)\right.\right\rangle 
\]
holds for almost every $a,b\in\mathbb{R}$.\end{thm}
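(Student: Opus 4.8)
The plan is to recognize that the system (\ref{eq:Cosserat}) is already written in the canonical form (\ref{eq:evo3}) with $M_2=0$, so that both the well-posedness and the energy balance drop out of the abstract results Theorem \ref{linear-evolutionary-solution-theory} and Theorem \ref{thm:energy balance} once the structural hypotheses (\ref{eq:pos-def}) and the skew-selfadjointness of $A$ have been checked. Reading off (\ref{eq:Cosserat}), the material operator is $M_0$, the block-diagonal operator with entries $\varrho_0,\varrho_1,C_0^{-1},C_1^{-1}$, the middle matrix is $M_1$, and $M_2=0$. Since $M_2=0$ is non-negative, the footnote to Theorem \ref{linear-evolutionary-solution-theory} guarantees that \emph{any} $\rho\in\rga0$ is admissible, matching the hypothesis $\rho\in\mathbb{R}_{>0}$ of the statement.

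The bulk of the argument is the verification of (\ref{eq:pos-def}). For $M_0$ I would argue that a bounded selfadjoint operator bounded below by some $c>0$ possesses a bounded selfadjoint inverse bounded below by $1/\|\cdot\|$; hence $C_0^{-1}$ and $C_1^{-1}$ are again selfadjoint and strictly positive definite, and the block-diagonal $M_0$ is therefore selfadjoint and strictly positive definite by hypothesis on $\varrho_0,\varrho_1,C_0,C_1$. For $M_1$ the only nonzero blocks are $-\Lambda^{*}$ in position $(2,3)$ and $\Lambda$ in position $(3,2)$; since $\Lambda=2\iota_{\wedge}*$ is bounded and $\Lambda^{*}=2*\iota_{\wedge}^{*}$ is by its very notation the Hilbert space adjoint of $\Lambda$, a direct block computation gives $M_1^{*}=-M_1$, so $M_1$ is skew-selfadjoint. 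Finally $M_2=0$ is trivially selfadjoint.

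For the skew-selfadjointness of $A$ I would invoke the block structure (\ref{eq:Hamiltonian}): writing $G$ for the operator on $L^{2,1}(\Omega)\oplus L^{2,1}(\Omega)\to L^{2,2}(\Omega)\oplus L^{2,2}(\Omega)$ that applies $\interior\nabla$ componentwise, $G$ is densely defined and closed because $\interior\nabla$ is (Example \ref{example:covaraint derivative}), and using $G^{*}=-\nabla\cdot$ from the same example, $A$ is precisely of the form $\left(\begin{smallmatrix}0&-G^{*}\\G&0\end{smallmatrix}\right)$ up to the sign convention of (\ref{eq:Hamiltonian}); hence $A=-A^{*}$.

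With (\ref{eq:pos-def}) verified and $A$ skew-selfadjoint, Theorem \ref{linear-evolutionary-solution-theory} applies and yields, for every $\rho\in\rga0$, a unique $(\mathbf{v},\mathbf{w},\sigma,\tau)\in H_{\rho,0}(\mathbb{R};H)$ depending continuously (and causally) on the right-hand side $(\mathbf{f},\mathbf{g},0,0)$, which is the well-posedness claim. The energy balance is then Theorem \ref{thm:energy balance} specialized to $M_2=0$: the two summands involving $\partial_0^{-1}U$ vanish, and since the last two components of the forcing vanish, $\Re\langle U|f\rangle$ collapses to $\Re\langle(\mathbf{v},\mathbf{w})|(\mathbf{f},\mathbf{g})\rangle$, giving exactly the asserted identity. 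I expect no genuine obstacle here; the only points demanding care are the block bookkeeping confirming $M_1^{*}=-M_1$ and the matching of the sign convention of $A$ in (\ref{eq:Cosserat}) to the standard Hamiltonian form (\ref{eq:Hamiltonian}).
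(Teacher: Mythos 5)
Your proposal is correct and takes essentially the same route as the paper: the paper's own proof is the one-line remark that the assertion ``follows easily from the Theorems \ref{linear-evolutionary-solution-theory} and \ref{thm:energy balance}'', after having stated (\ref{eq:Cosserat}) in the canonical form with $M_{2}=0$. Your additional bookkeeping --- verifying (\ref{eq:pos-def}) for the block-diagonal $M_{0}$, the skew-selfadjointness of $M_{1}$ via $\Lambda^{*}=2*\iota_{\wedge}^{*}$ and of $A$ via the Hamiltonian block form (\ref{eq:Hamiltonian}) with $G$ acting as $\interior\nabla$ componentwise, and the specialization of the energy balance to $M_{2}=0$ --- merely makes explicit the checks the paper leaves to the reader.
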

\begin{proof}
The assertion follows easily from the Theorems \ref{linear-evolutionary-solution-theory}
and \ref{thm:energy balance}.\end{proof}
\begin{rem}
We briefly discuss the relationship of (\ref{eq:Cosserat}) to a model
discussed in \cite{zbMATH03490546}. From (\ref{eq:Cosserat}) with
$\partial_{0}\phi\coloneqq\mathbf{w}$ and $\partial_{0}u\coloneqq\mathbf{v}$
we read off
\begin{align*}
\varrho_{0}\partial_{0}^{2}u-\nabla\cdot\sigma & =\mathbf{f},\\
\varrho_{1}\partial_{0}^{2}\phi-\Lambda^{*}\sigma-\nabla\cdot\tau & =\mathbf{g},\\
\partial_{0}\sigma & =C_{0}\left(\nabla\partial_{0}u-\partial_{0}\Lambda\phi\right),\\
\partial_{0}\tau & =C_{1}\nabla\partial_{0}\phi.
\end{align*}
Applying $\partial_{0}^{-1}$ to the third and the fourth equation
we get in summary
\begin{align*}
\varrho_{0}\partial_{0}^{2}u-\nabla\cdot\sigma & =\mathbf{f},\\
\varrho_{1}\partial_{0}^{2}\phi-\Lambda^{*}\sigma-\nabla\cdot\tau & =\mathbf{g},\\
\sigma & =C_{0}\left(\nabla u+\phi\times\right),\\
\tau & =C_{1}\nabla\phi.
\end{align*}
Comparing with \cite[p.1--43]{zbMATH03490546} we see that this coincides
with the equations formally given there in the isotropic case, where
\[
C_{0}=2\alpha_{0}\mathrm{skew}+2\mu_{0}\mathrm{sym}+3\lambda_{0}\mathbb{P},
\]
\[
C_{1}=2\alpha_{1}\mathrm{skew}+2\mu_{1}\mathrm{sym}+3\lambda_{1}\mathbb{P}.
\]
for $\alpha_{0},\mu_{0},\lambda_{0},\alpha_{1},\mu_{1},\lambda_{1}\in\mathbb{R}.$
The needed positive definiteness of $C_{0},\: C_{1}$ can be conveniently
analyzed by the mechanism of Remark \ref{rem-block} which yields
the unitary equivalence
\begin{equation}
\left(\begin{array}{c}
\iota_{\mathrm{sym}_{0}}^{*}\\
\iota_{\mathrm{skew}}^{*}\\
\iota_{\mathbb{P}}^{*}
\end{array}\right)C_{k}\left(\begin{array}{ccc}
\iota_{\mathrm{sym}_{0}} & \iota_{\mathrm{skew}} & \iota_{\mathbb{P}}\end{array}\right)=\left(\begin{array}{ccc}
\iota_{\mathrm{sym}_{0}}^{*}C_{k}\iota_{\mathrm{sym}_{0}} & \iota_{\mathrm{sym}_{0}}^{*}C_{k}\iota_{\mathrm{skew}} & \iota_{\mathrm{sym}_{0}}^{*}C_{k}\iota_{\mathbb{P}}\\
\iota_{\mathrm{skew}}^{*}C_{k}\iota_{\mathrm{sym}_{0}} & \iota_{\mathrm{skew}}^{*}C_{k}\iota_{\mathrm{skew}} & \iota_{\mathrm{skew}}^{*}C_{k}\iota_{\mathbb{P}}\\
\iota_{\mathbb{P}}^{*}C_{k}\iota_{\mathrm{sym}_{0}} & \iota_{\mathbb{P}}^{*}C_{k}\iota_{\mathrm{skew}} & \iota_{\mathbb{P}}^{*}C_{k}\iota_{\mathbb{P}}
\end{array}\right),\: k\in\{0,1\}.\label{eq:non-Voigt}
\end{equation}
In the isotropic case we thus see that positive definiteness is ensured
if (and only if)
\[
\left(\begin{array}{ccc}
2\mu_{0} & 0 & 0\\
0 & 2\alpha_{0} & 0\\
0 & 0 & 3\left(\lambda_{0}+\frac{2}{3}\mu_{0}\right)
\end{array}\right),\left(\begin{array}{ccc}
2\mu_{1} & 0 & 0\\
0 & 2\alpha_{1} & 0\\
0 & 0 & 3\left(\lambda_{1}+\frac{2}{3}\mu_{1}\right)
\end{array}\right)
\]
are positive definite. In other words,
\[
\mu_{k},\alpha_{k},\lambda_{k}+\frac{2}{3}\mu_{k}>0,\: k\in\{0,1\}.
\]
It appears that the block matrix conversion (\ref{eq:non-Voigt})
is easier to use for obtaining positive definiteness conditions than
the usually employed Voigt type notation. The latter results from
the same mechanism by using the canonical projectors onto the relevant
Euclidean components rather than the three projectors used in (\ref{eq:non-Voigt}). 
\end{rem}
In the remainder of this section, we show that the system for micromorphic
media (see Theorem \ref{thm:micro-med-reform}) is a mother of a relative
of the equations of Cosserat elasticity (\ref{eq:Cosserat}). For
this we discuss a relative of Cosserat elasticity first:
\begin{thm}
\label{thm:relative_Cosserat}The following equations are the $\left(\begin{array}{cccc}
1 & 0 & 0 & 0\\
0 & \iota_{\mathrm{skew}}^{*}\frac{1}{\sqrt{2}}\Lambda & 0 & 0\\
0 & 0 & 1 & 0\\
0 & 0 & 0 & 1\otimes\iota_{\mathrm{skew}}^{*}\frac{1}{\sqrt{2}}\Lambda
\end{array}\right)$-relative%
\footnote{Here $1\otimes\iota_{\mathrm{skew}}^{*}\Lambda^{*}$ indicates that
$\iota_{\mathrm{skew}}^{*}\Lambda^{*}$ is only to be applied with
respect to the last two arguments. More precisely, if $F$ maps 2-linear
forms to 2-linear forms then $1\otimes F$ maps 3-tensors to 3-tensors
\begin{equation}
\left(\left(1\otimes F\right)T\right)\left(x,y,z\right)=\left(FT\left(x,\:\cdot\:,\:\cdot\:\right)\right)\left(y,z\right)\label{eq:tensor-product-map}
\end{equation}
and we use the notation $1\otimes F$ also for the canonical extension
to $L^{2,3}\left(\Omega\right)$, when $F$ can be canonically extended
to $L^{2,2}\left(\Omega\right)$, currently for $F=\iota_{\mathrm{skew}}^{*}\Lambda$
or $F=\Lambda^{*}\iota_{\mathrm{skew}}$.%
} of the model of Cosserat elasticity (\ref{eq:Cosserat}): 
\begin{align}
 & \left(\partial_{0}M_{0}+M_{1}+A\right)\left(\begin{array}{c}
\mathbf{v}\\
\omega\\
\sigma\\
\mu
\end{array}\right)=\left(\begin{array}{c}
\mathbf{f}\\
\mathbf{h}\\
0\\
0
\end{array}\right)\label{eq:rel_of_cosserat}\\
 & \in H_{\rho,0}(\mathbb{R};L^{2,1}\left(\Omega\right)\oplus\mathrm{skew}\left[L^{2,2}\left(\Omega\right)\right]\oplus L^{2,2}\left(\Omega\right)\oplus\left(1\otimes\mathrm{skew}\right)\left[L^{2,3}\left(\Omega\right)\right])\nonumber 
\end{align}
with
\[
M_{0}\coloneqq\left(\begin{array}{cccc}
\varrho_{0} & 0 & 0 & 0\\
0 & \varrho_{2} & 0 & 0\\
0 & 0 & C_{0}^{-1} & 0\\
0 & 0 & 0 & C_{2}^{-1}
\end{array}\right),
\]
\[
M_{1}\coloneqq\left(\begin{array}{cccc}
0 & 0 & 0 & 0\\
0 & 0 & -\sqrt{2}\iota_{\mathrm{skew}}^{*} & 0\\
0 & \sqrt{2}\iota_{\mathrm{skew}} & 0 & 0\\
0 & 0 & 0 & 0
\end{array}\right)
\]
and
\[
A\coloneqq\left(\begin{array}{cccc}
0 & 0 & -\nabla\cdot & 0\\
0 & 0 & 0 & -\overline{\iota_{\mathrm{skew}}^{*}\nabla\cdot}\left(1\otimes\iota_{\mathrm{skew}}\right)\\
-\interior\nabla & 0 & 0 & 0\\
0 & -\overline{\left(1\otimes\iota_{\mathrm{skew}}^{*}\right)\interior\nabla}\iota_{\mathrm{skew}} & 0 & 0
\end{array}\right),
\]
 where 
\[
\varrho_{2}=\frac{1}{2}\iota_{\mathrm{skew}}^{*}\Lambda\varrho_{1}\Lambda^{*}\iota_{\mathrm{skew}},
\]
\[
C_{2}=\frac{1}{2}\left(1\otimes\iota_{\mathrm{skew}}^{*}\Lambda\right)C_{1}\left(1\otimes\Lambda^{*}\iota_{\mathrm{skew}}\right).
\]
\end{thm}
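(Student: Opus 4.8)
The plan is to read the asserted system (\ref{eq:rel_of_cosserat}) as the image of the Cosserat system (\ref{eq:Cosserat}) under the transformation $S$ displayed in the statement, and to invoke the relative/descendant machinery of Definition \ref{def:relatives} and Corollary \ref{cor:well-posedness_descendant} with $B=S$. The first observation I would record is that, abbreviating $\Phi\coloneqq\tfrac{1}{\sqrt2}\iota_{\mathrm{skew}}^{*}\Lambda$, Remark \ref{rem:lambda_star_skew_unitary} says exactly that $\Phi\colon L^{2,1}(\Omega)\to\mathrm{skew}[L^{2,2}(\Omega)]$ is \emph{unitary} with $\Phi^{*}=\tfrac{1}{\sqrt2}\Lambda^{*}\iota_{\mathrm{skew}}$; consequently $1\otimes\Phi\colon L^{2,2}(\Omega)\to(1\otimes\mathrm{skew})[L^{2,3}(\Omega)]$ is unitary as well. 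Hence $S=\operatorname{diag}(1,\Phi,1,1\otimes\Phi)$ is a \emph{unitary} map onto the space $Z$ appearing in (\ref{eq:rel_of_cosserat}), so $S^{*}$ trivially has a bounded two-sided inverse and $S$ is a bijection; this is why (\ref{eq:rel_of_cosserat}) is a \emph{relative} rather than a genuine descendant, and it also secures the hypotheses of Corollary \ref{cor:well-posedness_descendant}.

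With this in place the material operators are obtained by the congruence $M_{i,S}=SM_{i}S^{*}$. For $M_{0}$ this is a block-diagonal computation giving $\Phi\varrho_{1}\Phi^{*}=\tfrac12\iota_{\mathrm{skew}}^{*}\Lambda\varrho_{1}\Lambda^{*}\iota_{\mathrm{skew}}=\varrho_{2}$ in the second slot and $(1\otimes\Phi)C_{1}^{-1}(1\otimes\Phi)^{*}=\bigl((1\otimes\Phi)C_{1}(1\otimes\Phi)^{*}\bigr)^{-1}=C_{2}^{-1}$ in the fourth (using unitarity to pull the inverse through), exactly the claimed $M_{0}$. For $M_{1}$ the only nontrivial entries are the $(2,3)$ and $(3,2)$ blocks, and here I would use the single algebraic identity $\Lambda\Lambda^{*}=2\,\mathrm{skew}$, which follows from the unitarity of $\Phi$. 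Indeed $\Phi\Lambda^{*}=\tfrac{1}{\sqrt2}\iota_{\mathrm{skew}}^{*}\Lambda\Lambda^{*}=\sqrt2\,\iota_{\mathrm{skew}}^{*}$ and $\Lambda\Phi^{*}=\tfrac{1}{\sqrt2}\Lambda\Lambda^{*}\iota_{\mathrm{skew}}=\sqrt2\,\iota_{\mathrm{skew}}$, which are precisely the $-\sqrt2\,\iota_{\mathrm{skew}}^{*}$ and $\sqrt2\,\iota_{\mathrm{skew}}$ entries of the stated $M_{1}$.

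The substance of the proof lies in identifying $A_{S}$. Writing the Cosserat $A$ in the form $\left(\begin{smallmatrix}0 & -G^{*}\\ G & 0\end{smallmatrix}\right)$ with $G=\operatorname{diag}(-\interior\nabla,-\interior\nabla)$ and $S$ block-diagonal as above, Corollary \ref{cor:well-posedness_descendant} gives $A_{S}=\left(\begin{smallmatrix}0 & -\overline{B_{0}G^{*}}B_{1}^{*}\\ \overline{B_{1}GB_{0}^{*}} & 0\end{smallmatrix}\right)$. The first rows and columns are untouched by $S$ and reproduce the $-\nabla\cdot$ and $-\interior\nabla$ entries verbatim; the only thing to prove is the single commutation identity
\[
(1\otimes\Phi)\,\interior\nabla\,\Phi^{*}=\overline{(1\otimes\iota_{\mathrm{skew}}^{*})\,\interior\nabla}\,\iota_{\mathrm{skew}},
\]
its adjoint then yielding the corresponding $-\nabla\cdot$ entry by skew-selfadjointness of $A_{S}$. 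I would establish this on the core of smooth, compactly supported skew $2$-tensor fields: for such a field $\beta$, the covariant derivative $\interior\nabla\beta$ is still skew in the two original arguments, so the right-hand side is simply $\nabla_{j}\beta_{ik}$ regarded as an element of $(1\otimes\mathrm{skew})[L^{2,3}(\Omega)]$; on the left, $\Phi^{*}$ is a fibrewise isomorphism assembled from the metric and the volume form, hence \emph{parallel} for the Levi--Civita connection, so $\interior\nabla$ commutes with it (acting only on the tensorial slots, which is exactly the meaning of the $1\otimes$ notation in the footnote to the statement), while $(1\otimes\Phi)(1\otimes\Phi^{*})$ is the identity on skew $2$-tensors. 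Both sides thus agree on the core and, being closed, everywhere.

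The one genuine obstacle is not the algebra but the domain bookkeeping, i.e.\ the passage from the core identity to an identity of closed operators and the verification that $A_{S}$ is literally the closure written in the statement. This is precisely the point flagged in Remark \ref{rem:closure}: one must check $\overline{(1\otimes\iota_{\mathrm{skew}}^{*})\interior\nabla}\,\iota_{\mathrm{skew}}=\overline{(1\otimes\iota_{\mathrm{skew}}^{*})\interior\nabla\,\iota_{\mathrm{skew}}}$, which holds here because the relevant outer factors ($\iota_{\mathrm{skew}}$, respectively $\Phi^{*}$) have bounded left-inverses and the smooth compactly supported skew fields form a core. Granting this, skew-selfadjointness of $A_{S}$ and hence the well-posedness and energy-balance conclusions follow directly from Corollary \ref{cor:well-posedness_descendant} together with Theorems \ref{linear-evolutionary-solution-theory} and \ref{thm:energy balance}, so no separate argument for those is required.
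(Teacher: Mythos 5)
Your proposal is correct and takes essentially the same route as the paper, whose own proof consists precisely of invoking Remark \ref{rem:lambda_star_skew_unitary} to see that the transformation is a bijection (hence a relative rather than a descendant), noting that the compatibility conditions hold, and leaving the rest to ``direct computation''. The identities you make explicit --- unitarity of $\tfrac{1}{\sqrt{2}}\Lambda^{*}\iota_{\mathrm{skew}}$, the relation $\Lambda\Lambda^{*}=2\,\mathrm{skew}$ behind the $M_{1}$-entries, the commutation of $\interior\nabla$ with the parallel fibrewise maps, and the closure bookkeeping in the spirit of Remark \ref{rem:closure} --- are exactly the content of that omitted computation, so nothing is missing.
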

\begin{proof}
Using Remark \ref{rem:lambda_star_skew_unitary}, we see that $\left(\begin{array}{cccc}
1 & 0 & 0 & 0\\
0 & \iota_{\mathrm{skew}}^{*}\frac{1}{\sqrt{2}}\Lambda & 0 & 0\\
0 & 0 & 1 & 0\\
0 & 0 & 0 & 1\otimes\iota_{\mathrm{skew}}^{*}\frac{1}{\sqrt{2}}\Lambda
\end{array}\right)$ is indeed a bijection. The compatibility conditions are easily verified.
The remaining parts are verified by direct computation.\end{proof}
\begin{rem}
\label{rem:An-alternative-perspective} $\,$

\begin{enumerate}[(a)]

\item Of course, the relative of the model for Cosserat elasticity
has the well-posedness requirements that the operators
\begin{align*}
\varrho_{0}:L^{2,1}\left(\Omega\right) & \to L^{2,1}\left(\Omega\right)\\
\varrho_{2}:\mathrm{skew}\left[L^{2,2}\left(\Omega\right)\right] & \to\mathrm{skew}\left[L^{2,2}\left(\Omega\right)\right]\\
C_{0}:L^{2,2}\left(\Omega\right) & \to L^{2,2}\left(\Omega\right)\\
C_{2}:\left(1\otimes\mathrm{skew}\right)\left[L^{2,3}\left(\Omega\right)\right] & \to\left(1\otimes\mathrm{skew}\right)\left[L^{2,3}\left(\Omega\right)\right]
\end{align*}
are strictly positive definite. In view of Remark \ref{rem:lambda_star_skew_unitary},
the latter requirements are equivalent to the well-posedness requirements
in Theorem \ref{thm:WP_Cosserat}.

\item We note that by a suitable scaling of the unknown $(\mathbf{v},\omega,\sigma,\mu)$
and of the source term $(\mathbf{f,h}),$ system (\ref{eq:rel_of_cosserat})
can be brought into the form 
\begin{equation}
\left(\partial_{0}\tilde{M_{0}}+\tilde{M}_{1}+A\right)\left(\begin{array}{c}
\tilde{\mathbf{v}}\\
\tilde{\omega}\\
\tilde{\sigma}\\
\tilde{\mu}
\end{array}\right)=\left(\begin{array}{c}
\tilde{\mathbf{f}}\\
\tilde{\mathbf{h}}\\
0\\
0
\end{array}\right),\label{eq:rescaled}
\end{equation}
where $\tilde{M}_{0}$ is unitarily equivalent to $M_{0}$ and $\tilde{M}_{1}$
is given by 
\[
\left(\begin{array}{cccc}
0 & 0 & 0 & 0\\
0 & 0 & -\iota_{\mathrm{skew}}^{*} & 0\\
0 & \iota_{\mathrm{skew}} & 0 & 0\\
0 & 0 & 0 & 0
\end{array}\right).
\]

\end{enumerate}
\end{rem}
Next, we show that the relative of Cosserat elasticity introduced
above is a descendant of the model of micromorphic media from Theorem
\ref{thm:micro-med-reform}. It has already been pointed out by R.D.~Mindlin
(\cite{0119.40302}) that the model for micromorphic media contains
Cosserat elasticity as a special case. The precise connection is as
follows:
\begin{thm}
The system (\ref{eq:rescaled}) is a $\left(\begin{array}{ccccc}
1 & 0 & 0 & 0 & 0\\
0 & \iota_{\mathrm{skew}}^{*} & 0 & 0 & 0\\
0 & 0 & 1 & 0 & 0\\
0 & 0 & 0 & 1\otimes\iota_{\mathrm{skew}}^{*} & 0\\
0 & 0 & 0 & 0 & \iota_{0}^{*}
\end{array}\right)$-descendant of the model discussed in Theorem \ref{thm:micro-med-reform},
where $\iota_{0}\colon\{0\}\to L^{2,3}(\Omega),\phi\mapsto0$.\end{thm}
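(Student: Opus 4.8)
The plan is to read the assertion as a single instance of Corollary \ref{cor:well-posedness_descendant}, so that the entire argument collapses to matching block-operator matrix products. First I would fix the Hamiltonian splitting of the micromorphic operator $A$ of Theorem \ref{thm:micro-med-reform}: writing $H=H_{0}\oplus H_{1}$ with the two ``velocity'' slots $H_{0}=L^{2,1}(\Omega)\oplus L^{2,2}(\Omega)$ and the three ``stress'' slots $H_{1}=L^{2,2}(\Omega)\oplus L^{2,3}(\Omega)\oplus\mathrm{sym}[L^{2,2}(\Omega)]$, one reads off $A=\left(\begin{smallmatrix}0 & -G^{*}\\ G & 0\end{smallmatrix}\right)$ with $G=\left(\begin{smallmatrix}-\interior\nabla & 0\\ 0 & -\interior\nabla\\ 0 & 0\end{smallmatrix}\right)$, so that $G^{*}=\left(\begin{smallmatrix}\nabla\cdot & 0 & 0\\ 0 & \nabla\cdot & 0\end{smallmatrix}\right)$ by $(\interior\nabla)^{*}=-\nabla\cdot$. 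Correspondingly the given $S$ factors as $\mathrm{diag}(B_{0},B_{1})$ with $B_{0}=\mathrm{diag}(1,\iota_{\mathrm{skew}}^{*})$ and $B_{1}=\mathrm{diag}(1,1\otimes\iota_{\mathrm{skew}}^{*},\iota_{0}^{*})$, putting us exactly into the hypotheses of the corollary; since $S$ annihilates the fifth slot and discards the non-skew parts of the second and fourth, it is not a bijection, so the result is genuinely a descendant.

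Next I would verify compatibility and compute the descendant operator $A_{S}$. Density of $GB_{0}^{*}$ and $G^{*}B_{1}^{*}$ is immediate because smooth compactly supported (skew) tensor fields lie in the domains of $\interior\nabla$ and $\nabla\cdot$ and are dense. A plain block multiplication then gives $\overline{B_{1}GB_{0}^{*}}=\mathrm{diag}\bigl(-\interior\nabla,\,-\overline{(1\otimes\iota_{\mathrm{skew}}^{*})\interior\nabla}\,\iota_{\mathrm{skew}}\bigr)$, and by the corollary following Theorem \ref{thm:compatible} the upper-right block is $-\overline{B_{0}G^{*}}B_{1}^{*}=\mathrm{diag}\bigl(-\nabla\cdot,\,-\overline{\iota_{\mathrm{skew}}^{*}\nabla\cdot}\,(1\otimes\iota_{\mathrm{skew}})\bigr)$; together these are precisely the operator $A$ of (\ref{eq:rescaled}).

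Then I would compute the material laws by conjugation. For $M_{1}$ the decisive cancellations are $\iota_{\mathrm{skew}}^{*}\,\mathrm{skew}=\iota_{\mathrm{skew}}^{*}$ and $\mathrm{skew}\,\iota_{\mathrm{skew}}=\iota_{\mathrm{skew}}$ (from $\mathrm{skew}=\iota_{\mathrm{skew}}\iota_{\mathrm{skew}}^{*}$), while $\iota_{0}^{*}=0$ kills every $\iota_{\mathrm{sym}}$ entry; the outcome is exactly the rescaled $\tilde M_{1}$, with no factor $\sqrt2$ ever introduced, which is why the descendant lands directly in (\ref{eq:rescaled}) rather than (\ref{eq:rel_of_cosserat}). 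For $M_{0}=\mathrm{diag}(\rho_{0},\rho_{2},W)$ the conjugation $SM_{0}S^{*}$ is block diagonal with $\varrho_{0}=\rho_{0}$, $\varrho_{2}=\iota_{\mathrm{skew}}^{*}\rho_{2}\iota_{\mathrm{skew}}$, and the transformed $W$-block reducing to $C_{0}^{-1}:=W_{00}$ and $C_{2}^{-1}:=(1\otimes\iota_{\mathrm{skew}}^{*})W_{11}(1\otimes\iota_{\mathrm{skew}})$; the $\sigma$--$\mu$ coupling $W_{01}(1\otimes\iota_{\mathrm{skew}})$ drops out (automatically so when the constitutive cross-terms $D_{0},F_{0}$ vanish, as in the isotropic situation, where $W$ is itself block diagonal). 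Finally $S^{*}$ is an isometric embedding (the $\iota_{0}$-slot contributing nothing), hence admits the bounded left inverse $S$ demanded by the corollary, so strict positive definiteness and selfadjointness of $M_{0,S}$ are inherited and the claimed well-posedness transfers.

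The main obstacle is the skew-selfadjointness of $A_{S}$, that is, the closure identity (\ref{eq:closure}), $\overline{B_{1}G}B_{0}^{*}=\overline{B_{1}GB_{0}^{*}}$, which by Remark \ref{rem:closure} can genuinely fail. I would discharge it through the sufficient condition recorded there: $B_{0}^{*}=\mathrm{diag}(1,\iota_{\mathrm{skew}})$ is isometric and therefore has a bounded left inverse, and $B_{0}^{*}[X]\cap D(G)$ --- the compactly supported smooth fields valued in $L^{2,1}(\Omega)\oplus\mathrm{skew}[L^{2,2}(\Omega)]$ --- is a core for $\overline{B_{1}G}$ restricted to $\overline{B_{0}^{*}[X]}$, precisely because $\interior\nabla$ is by definition the closure of the covariant derivative on exactly such fields. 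This is the only place where the Dirichlet (closure-of-compactly-supported) structure is essential, and it is what legitimizes identifying $A_{S}$ with the genuine $(S)$-descendant $\overline{SA}S^{*}$, completing the verification.
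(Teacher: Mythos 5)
Your proposal is correct, and its computational core --- conjugating $M_{1}$, $M_{0}$ and $A$ with $S=\mathrm{diag}\left(1,\iota_{\mathrm{skew}}^{*},1,1\otimes\iota_{\mathrm{skew}}^{*},\iota_{0}^{*}\right)$ and exploiting $\iota_{\mathrm{skew}}^{*}\,\mathrm{skew}=\iota_{\mathrm{skew}}^{*}$, $\mathrm{skew}\,\iota_{\mathrm{skew}}=\iota_{\mathrm{skew}}$ and the annihilation by $\iota_{0}^{*}$ --- is exactly the paper's proof, which consists of nothing more than these three block-matrix computations. Where you genuinely diverge is in what you prove around them: the paper only verifies the algebraic identification, and since the spatial operator of (\ref{eq:rescaled}), inherited from Theorem \ref{thm:relative_Cosserat}, is already written in the closure-then-embed form $-\overline{\iota_{\mathrm{skew}}^{*}\nabla\cdot}\left(1\otimes\iota_{\mathrm{skew}}\right)$, $-\overline{\left(1\otimes\iota_{\mathrm{skew}}^{*}\right)\interior\nabla}\,\iota_{\mathrm{skew}}$, computing $\overline{SA}S^{*}$ suffices and the closure identity (\ref{eq:closure}) is never needed for the statement itself. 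Your route through Corollary \ref{cor:well-posedness_descendant} and Remark \ref{rem:closure} proves strictly more --- skew-selfadjointness of the descendant and the transfer of well-posedness --- which the paper delegates to Remark \ref{rem:An-alternative-perspective}(a) and the general framework; that is a legitimate and arguably more self-contained packaging. Two spots deserve tightening. First, in your closure-identity step, $B_{0}^{*}[X]\cap D(G)$ is not the set of smooth compactly supported fields but all of $D(\interior\nabla)\oplus\left(\mathrm{skew}\left[L^{2,2}(\Omega)\right]\cap D(\interior\nabla)\right)$, and the core property does not follow from the definition of $\interior\nabla$ alone: you additionally need the commutation $\interior\nabla\,\mathrm{skew}=\left(1\otimes\mathrm{skew}\right)\interior\nabla$ on smooth compactly supported tensors (hence on $D(\interior\nabla)$ by closure), which lets you project an approximating sequence into the skew-valued subspace without destroying graph convergence, using $\iota_{\mathrm{skew}}^{*}\,\mathrm{skew}=\iota_{\mathrm{skew}}^{*}$ in the image. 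Second, your assertion that the coupling $W_{01}\left(1\otimes\iota_{\mathrm{skew}}\right)$ ``drops out'' holds only under the decoupling hypothesis you parenthetically invoke ($D_{0}=F_{0}=0$, or more generally $W_{01}\left(1\otimes\iota_{\mathrm{skew}}\right)=0$); the paper's own computation retains these coupling blocks, so matching the block-diagonal $\tilde{M}_{0}$ of (\ref{eq:rescaled}) tacitly presupposes the corresponding choice of mother coefficients --- a looseness you at least make explicit, which the paper does not.
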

\begin{proof}
The computations 
\begin{align*}
 & \left(\begin{array}{ccccc}
1 & 0 & 0 & 0 & 0\\
0 & \iota_{\mathrm{skew}}^{*} & 0 & 0 & 0\\
0 & 0 & 1 & 0 & 0\\
0 & 0 & 0 & 1\otimes\iota_{\mathrm{skew}}^{*} & 0\\
0 & 0 & 0 & 0 & \iota_{0}^{*}
\end{array}\right)\left(\begin{array}{ccccc}
0 & 0 & 0 & 0 & 0\\
0 & 0 & -\mathrm{skew} & 0 & -\iota_{\mathrm{sym}}\\
0 & \mathrm{skew} & 0 & 0 & 0\\
0 & 0 & 0 & 0 & 0\\
0 & \iota_{\mathrm{sym}}^{*} & 0 & 0 & 0
\end{array}\right)\left(\begin{array}{ccccc}
1 & 0 & 0 & 0 & 0\\
0 & \iota_{\mathrm{skew}} & 0 & 0 & 0\\
0 & 0 & 1 & 0 & 0\\
0 & 0 & 0 & 1\otimes\iota_{\mathrm{skew}} & 0\\
0 & 0 & 0 & 0 & \iota_{0}
\end{array}\right)=\\
 & =\left(\begin{array}{ccccc}
0 & 0 & 0 & 0 & 0\\
0 & 0 & -\iota_{\mathrm{skew}}^{*} & 0 & 0\\
0 & \iota_{\mathrm{skew}} & 0 & 0 & 0\\
0 & 0 & 0 & 0 & 0\\
0 & 0 & 0 & 0 & 0
\end{array}\right),
\end{align*}

\begin{align*}
 & \left(\begin{array}{ccccc}
1 & 0 & 0 & 0 & 0\\
0 & \iota_{\mathrm{skew}}^{*} & 0 & 0 & 0\\
0 & 0 & 1 & 0 & 0\\
0 & 0 & 0 & 1\otimes\iota_{\mathrm{skew}}^{*} & 0\\
0 & 0 & 0 & 0 & \iota_{0}^{*}
\end{array}\right)\left(\begin{array}{ccccc}
\varrho_{0} & 0 & 0 & 0 & 0\\
0 & \varrho_{2} & 0 & 0 & 0\\
0 & 0 & W_{00} & W_{01} & W_{02}\\
0 & 0 & W_{01}^{*} & W_{11} & W_{12}\\
0 & 0 & W_{02}^{*} & W_{12}^{*} & W_{22}
\end{array}\right)\left(\begin{array}{ccccc}
1 & 0 & 0 & 0 & 0\\
0 & \iota_{\mathrm{skew}} & 0 & 0 & 0\\
0 & 0 & 1 & 0 & 0\\
0 & 0 & 0 & 1\otimes\iota_{\mathrm{skew}} & 0\\
0 & 0 & 0 & 0 & \iota_{0}
\end{array}\right)\\
 & =\left(\begin{array}{ccccc}
\varrho_{0} & 0 & 0 & 0 & 0\\
0 & \iota_{\mathrm{skew}}^{*}\varrho_{2}\iota_{\mathrm{skew}} & 0 & 0 & 0\\
0 & 0 & W_{00} & W_{01}\left(1\otimes\iota_{\mathrm{skew}}\right) & 0\\
0 & 0 & \left(1\otimes\iota_{\mathrm{skew}}^{*}\right)W_{01}^{*} & \left(1\otimes\iota_{\mathrm{skew}}^{*}\right)W_{11}\left(1\otimes\iota_{\mathrm{skew}}\right) & 0\\
0 & 0 & 0 & 0 & 0
\end{array}\right)
\end{align*}
and
\begin{align*}
 & \overline{\left(\begin{array}{ccccc}
1 & 0 & 0 & 0 & 0\\
0 & \iota_{\mathrm{skew}}^{*} & 0 & 0 & 0\\
0 & 0 & 1 & 0 & 0\\
0 & 0 & 0 & 1\otimes\iota_{\mathrm{skew}}^{*} & 0\\
0 & 0 & 0 & 0 & \iota_{0}^{*}
\end{array}\right)\left(\begin{array}{ccccc}
0 & 0 & -\nabla\cdot & 0 & 0\\
0 & 0 & 0 & -\nabla\cdot & 0\\
-\interior\nabla & 0 & 0 & 0 & 0\\
0 & -\interior\nabla & 0 & 0 & 0\\
0 & 0 & 0 & 0 & 0
\end{array}\right)}\left(\begin{array}{ccccc}
1 & 0 & 0 & 0 & 0\\
0 & \iota_{\mathrm{skew}} & 0 & 0 & 0\\
0 & 0 & 1 & 0 & 0\\
0 & 0 & 0 & 1\otimes\iota_{\mathrm{skew}} & 0\\
0 & 0 & 0 & 0 & \iota_{0}
\end{array}\right)=\\
 & =\left(\begin{array}{ccccc}
0 & 0 & -\nabla\cdot & 0 & 0\\
0 & 0 & 0 & -\overline{\iota_{\mathrm{skew}}^{*}\nabla\cdot}\left(1\otimes\iota_{\mathrm{skew}}\right) & 0\\
-\interior\nabla & 0 & 0 & 0 & 0\\
0 & -\overline{\left(1\otimes\iota_{\mathrm{skew}}^{*}\right)\interior\nabla}\iota_{\mathrm{skew}} & 0 & 0 & 0\\
0 & 0 & 0 & 0 & 0
\end{array}\right)
\end{align*}
show the assertion.\end{proof}
\begin{rem}[Hemitropic Media]
Following the generalizations of the theory of Cosserat media in
\cite{MR0162404} we are led to consider instead of
\[
\left(\begin{array}{c}
\sigma\\
\tau
\end{array}\right)=\left(\begin{array}{cc}
C_{0} & 0\\
0 & C_{1}
\end{array}\right)\left(\left(\begin{array}{c}
\nabla u\\
\nabla\phi
\end{array}\right)+\left(\begin{array}{c}
\phi\times\\
0
\end{array}\right)\right)
\]
as in (\ref{eq:Cosserat}) the more general material constraint
\[
\left(\begin{array}{c}
\sigma\\
\tau
\end{array}\right)=\left(\begin{array}{cc}
C_{0} & E^{*}\\
E & C_{2}
\end{array}\right)\left(\left(\begin{array}{c}
\nabla u\\
\nabla\phi
\end{array}\right)+\left(\begin{array}{c}
\phi\times\\
0
\end{array}\right)\right)
\]
or more symmetrically, compare \cite{1136.74018}, 
\[
\left(\begin{array}{c}
\sigma\\
\tau
\end{array}\right)=\left(\begin{array}{cc}
C_{0} & E^{*}\\
E & C_{2}
\end{array}\right)\left(\left(\begin{array}{c}
\nabla u\\
\nabla\phi
\end{array}\right)+\left(\begin{array}{c}
\phi\times\\
u\times
\end{array}\right)\right).
\]
By a symmetric Gauss elimination we first see that strict positive
definiteness of $\left(\begin{array}{cc}
C_{0} & E^{*}\\
E & C_{2}
\end{array}\right)$ in $L^{2,2}\left(\Omega\right)\oplus L^{2,2}\left(\Omega\right)$
is equivalent to the strict positive definiteness of 
\begin{equation}
\left(\begin{array}{cc}
1 & 0\\
-EC_{0}^{-1} & 1
\end{array}\right)\left(\begin{array}{cc}
C_{0} & E^{*}\\
E & C_{2}
\end{array}\right)\left(\begin{array}{cc}
1 & -C_{0}^{-1}E^{*}\\
0 & 1
\end{array}\right)=\left(\begin{array}{cc}
C_{0} & 0\\
0 & C_{2}-EC_{0}^{-1}E^{*}
\end{array}\right).\label{eq:symGauss0}
\end{equation}
Following the construction in (\ref{eq:non-Voigt}) we see therefore
that in the isotropic case, where we would have
\[
C_{0}=2\mu_{0}\mathrm{sym}+2\alpha_{0}\mathrm{skew}+3\lambda_{0}\mathbb{P}=2\mu_{0}\mathrm{sym}_{0}+2\alpha_{0}\mathrm{skew}+\left(3\lambda_{0}+2\mu_{0}\right)\mathbb{P}
\]
as before and 
\begin{align*}
C_{2} & =2\mu_{2}\mathrm{sym}+2\alpha_{2}\mathrm{skew}+3\lambda_{2}\mathbb{P}=2\mu_{2}\mathrm{sym}_{0}+2\alpha_{2}\mathrm{skew}+\left(3\lambda_{2}+2\mu_{2}\right)\mathbb{P},\\
E & =2\kappa_{0}\mathrm{sym}+2\nu_{0}\mathrm{skew}+3\delta_{0}\mathbb{P}=2\kappa_{0}\mathrm{sym}_{0}+2\nu_{0}\mathrm{skew}+\left(3\delta_{0}+2\kappa_{0}\right)\mathbb{P},
\end{align*}
for $\mu_{0},\alpha_{0},\lambda_{0},\mu_{2},\alpha_{2},\lambda_{2},\kappa_{0},\nu_{0},\delta_{0}\in\mathbb{R}$,
positive definiteness can be ensured if (and only if) 
\[
C_{0}=\left(\begin{array}{ccc}
2\mu_{0} & 0 & 0\\
0 & 2\alpha_{0} & 0\\
0 & 0 & 3\left(\lambda_{0}+\frac{2}{3}\mu_{0}\right)
\end{array}\right)
\]
and 
\begin{align*}
 & C_{2}-EC_{0}^{-1}E^{*}=\\
 & =\left(\begin{array}{ccc}
2\frac{\mu_{0}\mu_{2}-\kappa_{0}^{2}}{\mu_{0}} & 0 & 0\\
0 & 2\frac{\alpha_{0}\alpha_{2}-\nu_{0}^{2}}{\alpha_{0}} & 0\\
0 & 0 & 3\frac{\left(\lambda_{2}+\frac{2}{3}\mu_{2}\right)\left(\lambda_{0}+\frac{2}{3}\mu_{0}\right)-\left(\delta_{0}+\frac{2}{3}\kappa_{0}\right)\left(\delta_{0}+\frac{2}{3}\kappa_{0}\right)}{\left(\lambda_{0}+\frac{2}{3}\mu_{0}\right)}
\end{array}\right)
\end{align*}
are both positive definite. In other words, we must have
\[
\mu_{0},\alpha_{0},\lambda_{0}+\frac{2}{3}\mu_{0},\,\mu_{0}\mu_{2}-\kappa_{0}^{2},\alpha_{0}\alpha_{2}-\nu_{0}^{2},\,\left(\lambda_{2}+\frac{2}{3}\mu_{2}\right)\left(\lambda_{0}+\frac{2}{3}\mu_{0}\right)-\left(\delta_{0}+\frac{2}{3}\kappa_{0}\right)\left(\delta_{0}+\frac{2}{3}\kappa_{0}\right)>0.
\]
For finding $M_{0}$ we need to invert $\left(\begin{array}{cc}
C_{0} & E^{*}\\
E & C_{2}
\end{array}\right)$. It is with (\ref{eq:symGauss0})
\begin{align*}
\left(\begin{array}{cc}
1 & -C_{0}^{-1}E^{*}\\
0 & 1
\end{array}\right)^{-1}\left(\begin{array}{cc}
C_{0} & E^{*}\\
E & C_{2}
\end{array}\right)^{-1}\left(\begin{array}{cc}
1 & 0\\
-EC_{0}^{-1} & 1
\end{array}\right)^{-1} & =\left(\begin{array}{cc}
C_{0}^{-1} & 0\\
0 & \left(C_{2}-EC_{0}^{-1}E^{*}\right)^{-1}
\end{array}\right)
\end{align*}
and so
\begin{align*}
\left(\begin{array}{cc}
C_{0} & E^{*}\\
E & C_{2}
\end{array}\right)^{-1} & =\left(\begin{array}{cc}
1 & -C_{0}^{-1}E^{*}\\
0 & 1
\end{array}\right)\left(\begin{array}{cc}
C_{0}^{-1} & 0\\
0 & \left(C_{2}-EC_{0}^{-1}E^{*}\right)^{-1}
\end{array}\right)\left(\begin{array}{cc}
1 & 0\\
-EC_{0}^{-1} & 1
\end{array}\right)\\
 & =\left(\begin{array}{cc}
C_{0}^{-1} & -C_{0}^{-1}E^{*}\left(C_{2}-EC_{0}^{-1}E^{*}\right)^{-1}\\
-\left(C_{2}-EC_{0}^{-1}E^{*}\right)^{-1}EC_{0}^{-1} & \left(C_{2}-EC_{0}^{-1}E^{*}\right)^{-1}
\end{array}\right)
\end{align*}
With this and abbreviating 
\[
C_{1}\coloneqq C_{2}-EC_{0}^{-1}E^{*}
\]
the operator coefficient $M_{0}$ becomes
\[
M_{0}=\left(\begin{array}{cccc}
\varrho_{0} & 0 & 0 & 0\\
0 & \varrho_{1} & 0 & 0\\
0 & 0 & C_{0}^{-1}+C_{0}^{-1}E^{*}C_{1}^{-1}EC_{0}^{-1} & -C_{0}^{-1}E^{*}C_{1}^{-1}\\
0 & 0 & -C_{1}^{-1}EC_{0}^{-1} & C_{1}^{-1}
\end{array}\right)
\]
and for $M_{1}$ we get 
\[
\left(\begin{array}{cccc}
0 & \quad0 & \quad0 & \quad-\Lambda^{*}\eta_{1}^{*}\\
0 & \quad0 & \quad-\Lambda^{*}\eta_{0}^{*} & \quad0\\
0 & \quad\eta_{0}\Lambda & \quad0 & \quad0\\
\eta_{1}\Lambda & \quad0 & \quad0 & \quad0
\end{array}\right),
\]
where the parameters $\eta_{0},\eta_{1}$ have been inserted for flexibility.
In \cite{1136.74018} we find the case $\eta_{0}=\eta_{1}=1$. For
$\eta_{0}=1$ and $\eta_{1}=0$ we recover a variant of the micropolar
media case with
\[
\left(\begin{array}{c}
\sigma\\
\tau
\end{array}\right)=\left(\begin{array}{cc}
C_{0} & E^{*}\\
E & C_{2}
\end{array}\right)\left(\left(\begin{array}{c}
\nabla u\\
\nabla\phi
\end{array}\right)+\left(\begin{array}{c}
\phi\times\\
0
\end{array}\right)\right)
\]
as the modified Hooke's law. The case $E=0$ corresponds to the case
discussed in (\ref{eq:Cosserat}). In any case the resulting evolutionary
equation
\[
\left(\partial_{0}M_{0}+M_{1}+\left(\begin{array}{cccc}
0 & 0 & -\nabla\cdot & 0\\
0 & 0 & 0 & -\nabla\cdot\\
-\interior\nabla & 0 & 0 & 0\\
0 & -\interior\nabla & 0 & 0
\end{array}\right)\right)\left(\begin{array}{c}
\mathbf{v}\\
\mathbf{w}\\
\sigma\\
\tau
\end{array}\right)=\left(\begin{array}{c}
\mathbf{f}\\
\mathbf{g}\\
0\\
0
\end{array}\right)
\]
for hemitropic media fits into the scheme of our solution theory,
here with underlying Hilbert space
\[
H=L^{2,1}\left(\Omega\right)\oplus L^{2,1}\left(\Omega\right)\oplus L^{2,2}\left(\Omega\right)\oplus L^{2,2}\left(\Omega\right).
\]

\end{rem}

\subsection{Other Descendants of Micromorphic Media\label{sub:Other-Descendants-of}}

Having applied the ``mother'' and ``descendant''-mechanism in
more involved situations, we only roughly state the connections of
the following equations with the model of micromorphic media by stating
the operator $B$, which transforms the model for micromorphic media
to the one under consideration by being a $(B)$-descendant.

The system of elastic equations read as follows
\[
\partial_{0}^{2}u-\nabla\cdot\sigma=\mathbf{f}\in H_{\rho,0}(\mathbb{R};L^{2,1}(\Omega))
\]

together with Hook's law:
\[
\sigma=C\nabla u,
\]
\[
K=C^{-1}
\]
where $K=C^{-1}$ is the compliance. We abbreviate $\partial_{0}u\eqqcolon\mathbf{v}.$

The material properties ($C\mathrm{skew}=\mathrm{skew}C=0$) usually
assumed and encoded in $C$ suggests to follow the considerations
of Remark \ref{rem:degenerate-materials} and so to consider only
the symmetric part $T=\iota_{\mathrm{sym}}^{*}\sigma$ of $\sigma$
as our actual unknown. With $C_{0}\coloneqq\mathrm{\iota_{sym}^{*}}C\mathrm{\iota_{sym}}$
we get
\begin{align*}
T & =C_{0}\left(\mathrm{\iota_{sym}^{*}}\nabla\right)u.
\end{align*}
According to Remark \ref{rem:degenerate-materials}, compare \cite{Picard201354},
we are therefore rather led to consider the reduced system in $L^{2,1}\left(\Omega\right)\oplus\mathrm{sym}\left[L^{2,2}\left(\Omega\right)\right]:$
\[
\left(\partial_{0}\left(\begin{array}{cc}
\varrho_{0} & 0\\
0 & C_{0}^{-1}
\end{array}\right)+\left(\begin{array}{cc}
0 & -\Div\\
-\interior\Grad & 0
\end{array}\right)\right)\left(\begin{array}{c}
\mathbf{v}\\
T
\end{array}\right)=\left(\begin{array}{c}
\mathbf{f}\\
0
\end{array}\right).
\]
Here we have utilized the abbreviations $\Div\coloneqq\left(\nabla\cdot\right)\iota_{\mathrm{sym}}$,~
$\interior\Grad\coloneqq\overline{\iota_{\mathrm{sym}}^{*}\interior\nabla}$
in keeping the notation used in \cite{pre05919306}. Now, it is easily
verified that the system of classical elasticity discussed is a $(B)$-descendant
of the systems for micromorphic media with 
\[
B=\left(\begin{array}{ccccc}
1 & 0 & 0 & 0 & 0\\
0 & \iota_{0}^{*} & 0 & 0 & 0\\
0 & 0 & \iota_{\mathrm{sym}}^{*} & 0 & 0\\
0 & 0 & 0 & \iota_{0}^{*} & 0\\
0 & 0 & 0 & 0 & \iota_{0}^{*}
\end{array}\right)
\]
such that the underlying Hilbert space becomes $H=L^{2,1}\left(\Omega\right)\oplus\left\{ 0\right\} \oplus\mathrm{sym}\left[L^{2,2}\left(\Omega\right)\right]\oplus\left\{ 0\right\} \oplus\left\{ 0\right\} .$
Indeed, we only check, whether the operator containing the spatial
derivatives admits the form desired:
\begin{align*}
 & \overline{\left(\begin{array}{ccccc}
1 & 0 & 0 & 0 & 0\\
0 & \iota_{0}^{*} & 0 & 0 & 0\\
0 & 0 & \iota_{\mathrm{sym}}^{*} & 0 & 0\\
0 & 0 & 0 & \iota_{0}^{*} & 0\\
0 & 0 & 0 & 0 & \iota_{0}^{*}
\end{array}\right)\left(\begin{array}{ccccc}
0 & 0 & -\nabla\cdot & 0 & 0\\
0 & 0 & 0 & -\nabla\cdot & 0\\
-\interior\nabla & 0 & 0 & 0 & 0\\
0 & -\interior\nabla & 0 & 0 & 0\\
0 & 0 & 0 & 0 & 0
\end{array}\right)}\left(\begin{array}{ccccc}
1 & 0 & 0 & 0 & 0\\
0 & \iota_{0} & 0 & 0 & 0\\
0 & 0 & \iota_{\mathrm{sym}} & 0 & 0\\
0 & 0 & 0 & \iota_{0} & 0\\
0 & 0 & 0 & 0 & \iota_{0}
\end{array}\right)=\\
 & =\left(\begin{array}{ccccc}
0 & 0 & -\nabla\cdot\iota_{\mathrm{sym}} & 0 & 0\\
0 & 0 & 0 & 0 & 0\\
-\overline{\iota_{\mathrm{sym}}^{*}\interior\nabla} & 0 & 0 & 0 & 0\\
0 & 0 & 0 & 0 & 0\\
0 & 0 & 0 & 0 & 0
\end{array}\right).
\end{align*}

\begin{rem}
(a) It may be interesting to note that by Korn's inequality we know
that in this case the closure bar does not add anything to the domain
of $\interior\Grad$ in comparison with $\interior\nabla$ and so
actually%
\[
\overline{\mathrm{\iota_{sym}^{*}}\interior\nabla}=\mathrm{\iota_{sym}^{*}}\interior\nabla.
\]
(b) For sake of comparison let us consider the simple isotropic case:
$C_{0}=2\mu\mathrm{sym}+3\lambda\mathbb{P}=2\mu\mathrm{sym}_{0}+\left(3\lambda+2\mu\right)\mathbb{P}$
with $\lambda,\mu\in\mathbb{R}$. Positive definiteness of $C_{0}$
(and so of $C_{0}^{-1}$) is characterized by
\[
\mu>0,\:\lambda+\frac{2}{3}\mu>0.
\]

\end{rem}
In the same spirit as above, we can construct other models stemming
from the model of micromorphic media, by assuming additional constraints
on the material. As for instance, assume the symmetry of the stress.
Then a natural choice for $B$ is 
\[
\left(\begin{array}{ccccc}
1 & 0 & 0 & 0 & 0\\
0 & \iota_{\mathrm{skew}}^{*} & 0 & 0 & 0\\
0 & 0 & \iota_{\mathrm{sym}}^{*} & 0 & 0\\
0 & 0 & 0 & 1\otimes\iota_{\mathrm{skew}}^{*} & 0\\
0 & 0 & 0 & 0 & \iota_{0}^{*}
\end{array}\right).
\]
This choice results in the following leading term $M_{0}$ 
\[
\left(\begin{array}{ccccc}
\varrho_{0} & 0 & 0 & 0 & 0\\
0 & \iota_{\mathrm{skew}}^{*}\varrho_{2}\iota_{\mathrm{skew}} & 0 & 0 & 0\\
0 & 0 & \iota_{\mathrm{sym}}^{*}W_{00}\iota_{\mathrm{sym}} & \iota_{\mathrm{sym}}^{*}W_{01}\left(1\otimes\iota_{\mathrm{skew}}\right) & 0\\
0 & 0 & \left(1\otimes\iota_{\mathrm{skew}}^{*}\right)\left(W_{01}\right)^{*}\iota_{\mathrm{sym}} & \left(1\otimes\iota_{\mathrm{skew}}^{*}\right)W_{11}\left(1\otimes\iota_{\mathrm{skew}}\right) & 0\\
0 & 0 & 0 & 0 & 0
\end{array}\right)
\]
and the following operator containing the spatial derivatives
\begin{align*}
 & \overline{\left(\begin{array}{ccccc}
1 & 0 & 0 & 0 & 0\\
0 & \iota_{\mathrm{skew}}^{*} & 0 & 0 & 0\\
0 & 0 & \iota_{\mathrm{sym}}^{*} & 0 & 0\\
0 & 0 & 0 & 1\otimes\iota_{\mathrm{skew}}^{*} & 0\\
0 & 0 & 0 & 0 & \iota_{0}^{*}
\end{array}\right)\left(\begin{array}{ccccc}
0 & 0 & -\nabla\cdot & 0 & 0\\
0 & 0 & 0 & -\nabla\cdot & 0\\
-\interior\nabla & 0 & 0 & 0 & 0\\
0 & -\interior\nabla & 0 & 0 & 0\\
0 & 0 & 0 & 0 & 0
\end{array}\right)}\left(\begin{array}{ccccc}
1 & 0 & 0 & 0 & 0\\
0 & \iota_{\mathrm{skew}} & 0 & 0 & 0\\
0 & 0 & \iota_{\mathrm{sym}} & 0 & 0\\
0 & 0 & 0 & 1\otimes\iota_{\mathrm{skew}} & 0\\
0 & 0 & 0 & 0 & \iota_{0}
\end{array}\right)=\\
 & =\left(\begin{array}{ccccc}
0 & 0 & -\nabla\cdot\iota_{\mathrm{sym}} & 0 & 0\\
0 & 0 & 0 & -\overline{\iota_{\mathrm{skew}}^{*}\nabla\cdot}\left(1\otimes\iota_{\mathrm{skew}}\right) & 0\\
-\overline{\iota_{\mathrm{sym}}^{*}\interior\nabla} & 0 & 0 & 0 & 0\\
0 & -\overline{\left(1\otimes\iota_{\mathrm{skew}}^{*}\right)\interior\nabla}\iota_{\mathrm{skew}} & 0 & 0 & 0\\
0 & 0 & 0 & 0 & 0
\end{array}\right).
\end{align*}

\begin{rem}
If we assume that the coefficients are such that the resulting systems
in Section \ref{sec:Cosserat-Elasticity} and Section \ref{sub:Micromorphic-Media}
are well-posed via the assumption that $M_{0}$ is strictly positive
definite, we can conclude well-posedness of the descendant model by
the general mechanism given in Corollary \ref{cor:well-posedness_descendant}.
However, it may be that the original system may fail to be well-posed.
Only after projecting onto the right Hilbert spaces, the equations
become well-posed. In Remark \ref{rem:degenerate-materials} we have
discussed the general issue of degenerate $M_{0}$ and of how to obtain
a well-posed equation by assuming that $M_{0}$ is strictly positive
on its range and projecting via the descendant mechanism onto the
orthogonal complement of $M_{0}$.
\end{rem}
For deriving a model discussed in \cite{1104.74007}, we are led to
consider the operator
\[
B=\left(\begin{array}{ccccc}
1 & 0 & 0 & 0 & 0\\
0 & \iota_{\mathrm{sym}_{0}}^{*} & 0 & 0 & 0\\
0 & 0 & 1 & 0 & 0\\
0 & 0 & 0 & 1\otimes\iota_{\mathrm{sym}_{0}}^{*} & 0\\
0 & 0 & 0 & 0 & \iota_{0}^{*}
\end{array}\right)
\]
as the compatible operator giving the descendant mechanism of the
micromorphic media model. The underlying Hilbert space of the descendant
model is
\[
H=L^{2,1}\left(\Omega\right)\oplus\mathrm{sym}_{0}\left[L^{2,2}\left(\Omega\right)\right]\oplus L^{2,2}\left(\Omega\right)\oplus\left(1\otimes\mathrm{sym}_{0}\right)\left[L^{2,3}\left(\Omega\right)\right]\oplus\left\{ 0\right\} .
\]
In this case $M_{1}$ admits the form
\begin{align*}
 & \left(\begin{array}{ccccc}
1 & 0 & 0 & 0 & 0\\
0 & \iota_{\mathrm{sym}_{0}}^{*} & 0 & 0 & 0\\
0 & 0 & 1 & 0 & 0\\
0 & 0 & 0 & 1\otimes\iota_{\mathrm{sym}_{0}}^{*} & 0\\
0 & 0 & 0 & 0 & \iota_{0}^{*}
\end{array}\right)\left(\begin{array}{ccccc}
0 & 0 & 0 & 0 & 0\\
0 & 0 & -\mathrm{skew} & 0 & -\iota_{\mathrm{sym}}\\
0 & \mathrm{skew} & 0 & 0 & 0\\
0 & 0 & 0 & 0 & 0\\
0 & \iota_{\mathrm{sym}}^{*} & 0 & 0 & 0
\end{array}\right)\left(\begin{array}{ccccc}
1 & 0 & 0 & 0 & 0\\
0 & \iota_{\mathrm{sym}_{0}} & 0 & 0 & 0\\
0 & 0 & 1 & 0 & 0\\
0 & 0 & 0 & 1\otimes\iota_{\mathrm{sym}_{0}} & 0\\
0 & 0 & 0 & 0 & \iota_{0}
\end{array}\right)=\\
 & =\left(\begin{array}{ccccc}
0 & 0 & 0 & 0 & 0\\
0 & 0 & 0 & 0 & 0\\
0 & 0 & 0 & 0 & 0\\
0 & 0 & 0 & 0 & 0\\
0 & 0 & 0 & 0 & 0
\end{array}\right)
\end{align*}
The new $M_{0}$ is given by
\begin{align*}
 & \left(\begin{array}{ccccc}
1 & 0 & 0 & 0 & 0\\
0 & \iota_{\mathrm{sym}_{0}}^{*} & 0 & 0 & 0\\
0 & 0 & 1 & 0 & 0\\
0 & 0 & 0 & 1\otimes\iota_{\mathrm{sym}_{0}}^{*} & 0\\
0 & 0 & 0 & 0 & \iota_{0}^{*}
\end{array}\right)\left(\begin{array}{ccccc}
\varrho_{0} & 0 & 0 & 0 & 0\\
0 & \varrho_{2} & 0 & 0 & 0\\
0 & 0 & W_{00} & W_{01} & W_{02}\\
0 & 0 & W_{01}^{*} & W_{11} & W_{12}\\
0 & 0 & W_{02}^{*} & W_{12}^{*} & W_{22}
\end{array}\right)\left(\begin{array}{ccccc}
1 & 0 & 0 & 0 & 0\\
0 & \iota_{\mathrm{sym}_{0}}^{*} & 0 & 0 & 0\\
0 & 0 & 1 & 0 & 0\\
0 & 0 & 0 & 1\otimes\iota_{\mathrm{sym}_{0}}^{*} & 0\\
0 & 0 & 0 & 0 & \iota_{0}^{*}
\end{array}\right)=\\
 & =\left(\begin{array}{ccccc}
\varrho_{0} & 0 & 0 & 0 & 0\\
0 & \iota_{\mathrm{sym_{0}}}^{*}\varrho_{2}\iota_{\mathrm{sym_{0}}} & 0 & 0 & 0\\
0 & 0 & W_{00} & W_{01}\left(1\otimes\iota_{\mathrm{sym}_{0}}\right) & 0\\
0 & 0 & \left(1\otimes\iota_{\mathrm{sym}_{0}}^{*}\right)W_{01}^{*} & \left(1\otimes\iota_{\mathrm{sym}_{0}}^{*}\right)W_{11}\left(1\otimes\iota_{\mathrm{sym}_{0}}\right) & 0\\
0 & 0 & 0 & 0 & 0
\end{array}\right),
\end{align*}
whereas the operator containing the spatial derivatives reads as
\begin{align*}
 & \overline{\left(\begin{array}{ccccc}
1 & 0 & 0 & 0 & 0\\
0 & \iota_{\mathrm{sym}_{0}}^{*} & 0 & 0 & 0\\
0 & 0 & 1 & 0 & 0\\
0 & 0 & 0 & 1\otimes\iota_{\mathrm{sym}_{0}}^{*} & 0\\
0 & 0 & 0 & 0 & \iota_{0}^{*}
\end{array}\right)\left(\begin{array}{ccccc}
0 & 0 & -\nabla\cdot & 0 & 0\\
0 & 0 & 0 & -\nabla\cdot & 0\\
-\interior\nabla & 0 & 0 & 0 & 0\\
0 & -\interior\nabla & 0 & 0 & 0\\
0 & 0 & 0 & 0 & 0
\end{array}\right)}\left(\begin{array}{ccccc}
1 & 0 & 0 & 0 & 0\\
0 & \iota_{\mathrm{sym}_{0}}^{*} & 0 & 0 & 0\\
0 & 0 & 1 & 0 & 0\\
0 & 0 & 0 & 1\otimes\iota_{\mathrm{sym}_{0}}^{*} & 0\\
0 & 0 & 0 & 0 & \iota_{0}^{*}
\end{array}\right)=\\
 & =\left(\begin{array}{ccccc}
0 & 0 & -\nabla\cdot & 0 & 0\\
0 & 0 & 0 & -\overline{\iota_{\mathrm{sym}_{0}}^{*}\nabla\cdot}\left(1\otimes\iota_{\mathrm{sym}_{0}}\right) & 0\\
-\interior\nabla & 0 & 0 & 0 & 0\\
0 & -\overline{\left(1\otimes\iota_{\mathrm{sym}_{0}}^{*}\right)\interior\nabla}\iota_{\mathrm{sym}_{0}} & 0 & 0 & 0\\
0 & 0 & 0 & 0 & 0
\end{array}\right).
\end{align*}

A further descendant can be obtained, if, in the previous model, we
additionally assume the symmetry of stress. The operator $B$ in this
case reads as 
\[
B=\left(\begin{array}{ccccc}
1 & 0 & 0 & 0 & 0\\
0 & \iota_{\mathrm{sym}_{0}}^{*} & 0 & 0 & 0\\
0 & 0 & \iota_{\mathrm{sym}}^{*} & 0 & 0\\
0 & 0 & 0 & 1\otimes\iota_{\mathrm{sym}_{0}}^{*} & 0\\
0 & 0 & 0 & 0 & \iota_{0}^{*}
\end{array}\right)
\]
mapping onto the Hilbert space 
\[
H=L^{2,1}\left(\Omega\right)\oplus\mathrm{sym}_{0}\left[L^{2,2}\left(\Omega\right)\right]\oplus\mathrm{sym}\left[L^{2,2}\left(\Omega\right)\right]\oplus\left(1\otimes\mathrm{sym}_{0}\right)\left[L^{2,3}\left(\Omega\right)\right]\oplus\left\{ 0\right\} .
\]

The new $M_{0}$ is given by 
\[
\left(\begin{array}{ccccc}
\varrho_{0} & 0 & 0 & 0 & 0\\
0 & \iota_{\mathrm{sym_{0}}}^{*}\varrho_{2}\iota_{\mathrm{sym_{0}}} & 0 & 0 & 0\\
0 & 0 & \iota_{\mathrm{sym}}^{*}W_{00}\iota_{\mathrm{sym}} & \iota_{\mathrm{sym}}^{*}W_{01}\left(1\otimes\iota_{\mathrm{sym}_{0}}\right) & 0\\
0 & 0 & \left(1\otimes\iota_{\mathrm{sym}_{0}}^{*}\right)W_{01}^{*}\iota_{\mathrm{sym}} & \left(1\otimes\iota_{\mathrm{sym}_{0}}^{*}\right)W_{11}\left(1\otimes\iota_{\mathrm{sym}_{0}}\right) & 0\\
0 & 0 & 0 & 0 & 0
\end{array}\right)
\]
 and the resulting operator containing the spatial derivatives is
written as
\begin{align*}
 & \overline{\left(\begin{array}{ccccc}
1 & 0 & 0 & 0 & 0\\
0 & \iota_{\mathrm{sym}_{0}}^{*} & 0 & 0 & 0\\
0 & 0 & \iota_{\mathrm{sym}}^{*} & 0 & 0\\
0 & 0 & 0 & 1\otimes\iota_{\mathrm{sym}_{0}}^{*} & 0\\
0 & 0 & 0 & 0 & \iota_{0}^{*}
\end{array}\right)\left(\begin{array}{ccccc}
0 & 0 & -\nabla\cdot & 0 & 0\\
0 & 0 & 0 & -\nabla\cdot & 0\\
-\interior\nabla & 0 & 0 & 0 & 0\\
0 & -\interior\nabla & 0 & 0 & 0\\
0 & 0 & 0 & 0 & 0
\end{array}\right)}\left(\begin{array}{ccccc}
1 & 0 & 0 & 0 & 0\\
0 & \iota_{\mathrm{sym}_{0}}^{*} & 0 & 0 & 0\\
0 & 0 & \iota_{\mathrm{sym}} & 0 & 0\\
0 & 0 & 0 & 1\otimes\iota_{\mathrm{sym}_{0}}^{*} & 0\\
0 & 0 & 0 & 0 & \iota_{0}^{*}
\end{array}\right)=\\
 & =\left(\begin{array}{ccccc}
0 & 0 & -\nabla\cdot\iota_{\mathrm{sym}} & 0 & 0\\
0 & 0 & 0 & -\overline{\iota_{\mathrm{sym}_{0}}^{*}\nabla\cdot}\left(1\otimes\iota_{\mathrm{sym}_{0}}\right) & 0\\
-\overline{\iota_{\mathrm{sym}}^{*}\interior\nabla} & 0 & 0 & 0 & 0\\
0 & -\overline{\left(1\otimes\iota_{\mathrm{sym}_{0}}^{*}\right)\interior\nabla}\iota_{\mathrm{sym}_{0}} & 0 & 0 & 0\\
0 & 0 & 0 & 0 & 0
\end{array}\right).
\end{align*}
Note that the resulting model is still a coupled model.

\section{\label{sec:Microstretch-Models}Microstretch Models}

We conclude the article by stating a model differing from the models
already discussed in view of the mother and descendant mechanism.
The equations, taken from \cite{0718.73014,0164.27507}, read as follows

\begin{align}
 & \partial_{0}\left(\begin{array}{ccc}
\rho_{0} & 0 & 0\\
0 & \rho_{1} & 0\\
0 & 0 & \rho_{2}
\end{array}\right)\left(\begin{array}{c}
\dot{u}\\
\dot{\psi}\\
\dot{\varphi}
\end{array}\right)+\left(\begin{array}{ccc}
0 & 0 & 0\\
-\Lambda^{\ast} & 0 & 0\\
0 & 0 & 0
\end{array}\right)\left(\begin{array}{c}
\tau\\
\mu\\
\pi
\end{array}\right)+\left(\begin{array}{c}
0\\
0\\
\sigma
\end{array}\right)-\left(\begin{array}{ccc}
\nabla\cdot & 0 & 0\\
0 & \nabla\cdot & 0\\
0 & 0 & \nabla\cdot
\end{array}\right)\left(\begin{array}{c}
\tau\\
\mu\\
\pi
\end{array}\right)=\left(\begin{array}{c}
\mathbf{f}\\
\mathbf{g}\\
\mathbf{h}
\end{array}\right)\label{eq:stretch}\\
 & \in H_{\rho,0}(\mathbb{R};L^{2,1}(\Omega)\oplus L^{2,2}(\Omega)\oplus L^{2,2}(\Omega))\nonumber 
\end{align}
where $\Lambda\coloneqq2\iota_{\wedge}\ast$ (see Subsection \ref{sub:Micropolar-or-Cosserat}),
$\tau,\sigma\in H_{\rho,0}(\mathbb{R};L^{2,2}(\Omega)),\mu,\pi\in H_{\rho,0}(\mathbb{R};L^{2,3}(\Omega))$
and the $\rho_{i}$'s, $i\in\{0,1,2\}$, are bounded selfadjoint operators
in the respective spaces $L^{2,1}(\Omega)$ or $L^{2,2}(\Omega)$.
We find a constitutive relation of the form 
\begin{equation}
\left(\begin{array}{c}
\tau\\
\mu\\
\sigma\\
\pi
\end{array}\right)=\left(\begin{array}{cccc}
C_{0} & B & D & F\\
B^{*} & C_{1} & E & G\\
D^{*} & E^{*} & C_{2} & K\\
F^{*} & G^{*} & K^{*} & C_{3}
\end{array}\right)\left(\begin{array}{c}
e\\
\kappa\\
\varphi\\
\zeta
\end{array}\right),\label{eq:cont_rel_microstretch}
\end{equation}
where 
\begin{align}
e & =\nabla u-\Lambda\psi=\nabla u+\psi\times,\nonumber \\
\kappa & =\nabla\psi,\nonumber \\
\zeta & =\nabla\varphi.\label{eq:def_microstretch}
\end{align}
and suitable bounded linear operators $C_{0},B,D,F,C_{1},E,G,C_{2},K,C_{3}$.
In the following, we want to show that the latter model fits into
the general scheme of Theorem \ref{linear-evolutionary-solution-theory}.
For this we assume continuous invertibility of $\left(\begin{array}{ccc}
C_{0} & B & F\\
B^{*} & C_{1} & G\\
F^{*} & G^{*} & C_{3}
\end{array}\right)$ and denote its inverse by $W$. We have
\[
\left(\begin{array}{c}
\tau-D\varphi\\
\mu-E\varphi\\
\pi-K^{*}\varphi
\end{array}\right)=\left(\begin{array}{ccc}
C_{0} & B & F\\
B^{*} & C_{1} & G\\
F^{*} & G^{*} & C_{3}
\end{array}\right)\left(\begin{array}{c}
e\\
\kappa\\
\zeta
\end{array}\right)
\]
 and thus

\begin{align}
\left(\begin{array}{c}
e\\
\kappa\\
\zeta
\end{array}\right) & =W\left(\begin{array}{c}
\tau-D\varphi\\
\mu-E\varphi\\
\pi-K^{*}\varphi
\end{array}\right)\nonumber \\
 & =W\left(\begin{array}{c}
\tau\\
\mu\\
\pi
\end{array}\right)-W\left(\begin{array}{c}
D\\
E\\
K^{*}
\end{array}\right)\varphi.\label{eq:reduced_const_rel_stretch}
\end{align}
The latter equations together with the dynamic equations from above
result in the canonical form 
\begin{equation}
\left(\partial_{0}M_{0}+M_{1}+\partial_{0}^{-1}M_{2}+A\right)\left(\begin{array}{c}
\dot{u}\\
\dot{\psi}\\
\dot{\varphi}\\
\tau\\
\mu\\
\pi
\end{array}\right)=\left(\begin{array}{c}
\mathbf{f}\\
\mathbf{g}\\
\mathbf{h}\\
0\\
0\\
0
\end{array}\right),\label{eq:microstretch_evo}
\end{equation}
where
\begin{align*}
M_{0} & =\left(\begin{array}{cccc}
\rho_{0} & 0 & 0 & \begin{array}{ccc}
0 & 0 & 0\end{array}\\
0 & \rho_{1} & 0 & \begin{array}{ccc}
0 & 0 & 0\end{array}\\
0 & 0 & \rho_{2} & \begin{array}{ccc}
0 & 0 & 0\end{array}\\
\begin{array}{c}
0\\
0\\
0
\end{array} & \begin{array}{c}
0\\
0\\
0
\end{array} & \begin{array}{c}
0\\
0\\
0
\end{array} & W
\end{array}\right),\\
M_{1} & =\left(\begin{array}{cccc}
0 & 0 & 0 & \begin{array}{ccc}
\quad0\; & 0 & 0\end{array}\\
0 & 0 & 0 & \begin{array}{ccc}
-\Lambda^{\ast} & \,0 & 0\end{array}\\
0 & 0 & 0 & \left(\begin{array}{ccc}
D^{\ast} & E^{\ast} & K\end{array}\right)W\\
\begin{array}{c}
0\\
0\\
0
\end{array} & \begin{array}{c}
\Lambda\\
0\\
0
\end{array} & -W\left(\begin{array}{c}
D\\
E\\
K^{\ast}
\end{array}\right) & \begin{array}{ccc}
\quad0\; & 0 & 0\\
\quad0\; & 0 & 0\\
\quad0\; & 0 & 0
\end{array}
\end{array}\right),\\
M_{2} & =\left(\begin{array}{cccc}
0 & 0 & 0 & \begin{array}{ccc}
0 & 0 & 0\end{array}\\
0 & 0 & 0 & \begin{array}{ccc}
0 & 0 & 0\end{array}\\
0 & 0 & C_{2}-\left(\begin{array}{ccc}
D^{\ast} & E^{\ast} & K\end{array}\right)W\left(\begin{array}{c}
D\\
E\\
K^{\ast}
\end{array}\right) & \begin{array}{ccc}
0 & 0 & 0\end{array}\\
\begin{array}{c}
0\\
0\\
0
\end{array} & \begin{array}{c}
0\\
0\\
0
\end{array} & \begin{array}{c}
0\\
0\\
0
\end{array} & \begin{array}{ccc}
0 & 0 & 0\\
0 & 0 & 0\\
0 & 0 & 0
\end{array}
\end{array}\right),\\
A & =\left(\begin{array}{cccccc}
0 & 0 & 0 & -\nabla\cdot & 0 & 0\\
0 & 0 & 0 & 0 & -\nabla\cdot & 0\\
0 & 0 & 0 & 0 & 0 & -\nabla\cdot\\
-\interior\nabla & 0 & 0 & 0 & 0 & 0\\
0 & -\interior\nabla & 0 & 0 & 0 & 0\\
0 & 0 & -\interior\nabla & 0 & 0 & 0
\end{array}\right).
\end{align*}
Indeed, the last three rows in (\ref{eq:microstretch_evo}) are (\ref{eq:reduced_const_rel_stretch})
combined with (\ref{eq:def_microstretch}) while the first two rows
of (\ref{eq:microstretch_evo}) are the first two rows of (\ref{eq:stretch}).
From (\ref{eq:cont_rel_microstretch}) and (\ref{eq:reduced_const_rel_stretch})
we read off that 
\begin{align*}
\sigma & =C_{2}\varphi+\left(\begin{array}{ccc}
D^{\ast} & E^{\ast} & K\end{array}\right)\left(\begin{array}{c}
e\\
\kappa\\
\zeta
\end{array}\right)\\
 & =C_{2}\varphi+\left(\begin{array}{ccc}
D^{\ast} & E^{\ast} & K\end{array}\right)W\left(\begin{array}{c}
\tau\\
\mu\\
\pi
\end{array}\right)-\left(\begin{array}{ccc}
D^{\ast} & E^{\ast} & K\end{array}\right)W\left(\begin{array}{c}
D\\
E\\
K^{*}
\end{array}\right)\varphi\\
 & =\left(\begin{array}{ccc}
D^{\ast} & E^{\ast} & K\end{array}\right)W\left(\begin{array}{c}
\tau\\
\mu\\
\pi
\end{array}\right)+\partial_{0}^{-1}\left(C_{2}-\left(\begin{array}{ccc}
D^{\ast} & E^{\ast} & K\end{array}\right)W\left(\begin{array}{c}
D\\
E\\
K^{*}
\end{array}\right)\right)\dot{\varphi},
\end{align*}
and thus, the third row of (\ref{eq:microstretch_evo}) gives the
third row in (\ref{eq:stretch}). 
\begin{thm}
Assume that $\rho_{i},\, i\in\{0,1,2\},$ and $W$ are strictly positive
definite. Then there exists $\rho_{0}\in\mathbb{R}_{>0}$ such that
for every $\rho\geq\rho_{0}$ and $(\mathbf{f},\mathbf{g},\mathbf{h})\in H_{\rho,0}(\mathbb{R};L^{2,1}(\Omega)\oplus L^{2,2}(\Omega)\oplus L^{2,2}(\Omega))$
there exists uniquely determined $(\dot{u},\dot{\psi},\dot{\varphi},\tau,\mu,\pi)\in H_{\rho,0}(\mathbb{R};L^{2,1}(\Omega)\oplus L^{2,2}(\Omega)\oplus L^{2,2}(\Omega)\oplus L^{2,2}(\Omega)\oplus L^{2,3}(\Omega)\oplus L^{2,3}(\Omega))$
satisfying (\ref{eq:microstretch_evo}). Moreover, the energy balance
\begin{multline*}
\left\langle \left(\begin{array}{c}
\dot{u}\\
\dot{\psi}\\
\dot{\varphi}\\
\tau\\
\mu\\
\pi
\end{array}\right)\left|\left(\begin{array}{c}
\rho_{0}\dot{u}\\
\rho_{1}\dot{\psi}\\
\rho_{2}\dot{\varphi}\\
W\left(\begin{array}{c}
\tau\\
\mu\\
\pi
\end{array}\right)
\end{array}\right)\right.\right\rangle (b)+\left\langle \varphi\left|\left(C_{2}-\left(\begin{array}{ccc}
D^{\ast} & E^{\ast} & K\end{array}\right)W\left(\begin{array}{c}
D\\
E\\
K^{\ast}
\end{array}\right)\right)\varphi\right.\right\rangle (b)\\
=\left\langle \left(\begin{array}{c}
\dot{u}\\
\dot{\psi}\\
\dot{\varphi}\\
\tau\\
\mu\\
\pi
\end{array}\right)\left|\left(\begin{array}{c}
\rho_{0}\dot{u}\\
\rho_{1}\dot{\psi}\\
\rho_{2}\dot{\varphi}\\
W\left(\begin{array}{c}
\tau\\
\mu\\
\pi
\end{array}\right)
\end{array}\right)\right.\right\rangle (a)+\left\langle \varphi\left|\left(C_{2}-\left(\begin{array}{ccc}
D^{\ast} & E^{\ast} & K\end{array}\right)W\left(\begin{array}{c}
D\\
E\\
K^{\ast}
\end{array}\right)\right)\varphi\right.\right\rangle (a)\\
+2\Re\intop_{a}^{b}\left\langle \left.\left(\begin{array}{c}
\dot{u}\\
\dot{\psi}\\
\dot{\varphi}
\end{array}\right)\right|\left(\begin{array}{c}
\mathbf{f}\\
\mathbf{g}\\
\mathbf{h}
\end{array}\right)\right\rangle 
\end{multline*}
holds for almost every $a,b\in\mathbb{R}.$\end{thm}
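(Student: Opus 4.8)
The plan is to observe that system (\ref{eq:microstretch_evo}) has already been brought into the canonical form $(\partial_0 M_0+M_1+\partial_0^{-1}M_2+A)U=F$ of Subsection \ref{sub:WP:Linear-Evolutionary-Equations}, with $U=(\dot u,\dot\psi,\dot\varphi,\tau,\mu,\pi)$ and $F=(\mathbf f,\mathbf g,\mathbf h,0,0,0)$. Thus the entire assertion reduces to verifying the structural hypotheses (\ref{eq:pos-def}) on $M_0,M_1,M_2$ together with the skew-selfadjointness of $A$; the well-posedness and causality then follow from Theorem \ref{linear-evolutionary-solution-theory}, and the energy balance from Theorem \ref{thm:energy balance}. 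Throughout I abbreviate the row of material coefficients by $P\coloneqq\left(\begin{array}{ccc}D^{\ast} & E^{\ast} & K\end{array}\right)$, so that its adjoint $P^{\ast}$ is the corresponding column, and the strict positive definiteness of $W$ supplies $W=W^{\ast}$.

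First I would dispatch $M_0$ and $A$. Being block diagonal with diagonal entries $\rho_0,\rho_1,\rho_2$ and $W$, the operator $M_0$ is selfadjoint and strictly positive definite precisely under the stated hypotheses. The operator $A$ has the Hamiltonian shape $\left(\begin{array}{cc}0 & -G^{\ast}\\ G & 0\end{array}\right)$, where $G$ lets $\interior\nabla$ act diagonally on the three velocity slots; its skew-selfadjointness is then nothing but the skew-adjointness relation between $\interior\nabla$ and $\nabla\cdot$ recalled in Example \ref{example:covaraint derivative}. The two genuinely computational points are $M_1$ and $M_2$. For $M_1$ the only nonzero off-diagonal couplings are $-\Lambda^{\ast}$ against $\Lambda$ (between the $\dot\psi$- and $\tau$-slots) and $PW$ against $-WP^{\ast}$ (between the $\dot\varphi$-slot and the stress block); using $W=W^{\ast}$ one reads off $-WP^{\ast}=-(PW)^{\ast}$ and $\Lambda=-(-\Lambda^{\ast})^{\ast}$, so each off-diagonal block is the negative adjoint of its mirror and hence $M_1^{\ast}=-M_1$. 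For $M_2$ only the $\dot\varphi$-diagonal block $C_2-PWP^{\ast}$ survives; since $W=W^{\ast}$ gives $(PWP^{\ast})^{\ast}=PWP^{\ast}$ and $C_2$ is selfadjoint (as is implicit in the symmetric form of (\ref{eq:cont_rel_microstretch})), this block, and thus $M_2$, is selfadjoint.

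With (\ref{eq:pos-def}) in hand I would invoke Theorem \ref{linear-evolutionary-solution-theory}. The point worth stressing is that $M_2$ is here merely selfadjoint and in general not non-negative, because $C_2-PWP^{\ast}$ need not be positive; the solution theory therefore applies only for all sufficiently large $\rho$, which is exactly the threshold $\rho_0$ appearing in the statement (had $C_2-PWP^{\ast}$ been non-negative, any $\rho>0$ would do). This furnishes existence, uniqueness, continuous dependence and causality.

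Finally, the energy balance is Theorem \ref{thm:energy balance} specialized to these data and multiplied by $2$. Since $M_2$ annihilates every slot except $\dot\varphi$ and $\partial_0^{-1}\dot\varphi=\varphi$, the term $\left\langle \partial_0^{-1}U|M_2\partial_0^{-1}U\right\rangle$ collapses to $\left\langle \varphi|(C_2-PWP^{\ast})\varphi\right\rangle$, while the vanishing of the last three entries of $F$ reduces $\Re\left\langle U|F\right\rangle$ to $\Re\left\langle (\dot u,\dot\psi,\dot\varphi)|(\mathbf f,\mathbf g,\mathbf h)\right\rangle$; this reproduces precisely the claimed identity. I expect the main obstacle to be the bookkeeping needed to confirm $M_1^{\ast}=-M_1$ for the $W$-weighted couplings, and, conceptually, the recognition that the potential indefiniteness of $M_2$ is what forces the large-$\rho$ restriction rather than permitting arbitrary $\rho>0$.
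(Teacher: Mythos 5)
Your proposal is correct and takes essentially the same route as the paper, whose entire proof is the one-line observation that the statement is a direct consequence of Theorem \ref{linear-evolutionary-solution-theory} and Theorem \ref{thm:energy balance}; your verification of the hypotheses (\ref{eq:pos-def}) --- strict positivity of the block-diagonal $M_{0}$, skew-selfadjointness of the $\Lambda$- and $W$-weighted couplings in $M_{1}$ via $W=W^{\ast}$, selfadjointness of the single surviving block $C_{2}-PWP^{\ast}$ of $M_{2}$, and the Hamiltonian structure of $A$ --- merely makes explicit what the paper presupposes from its reformulation (\ref{eq:microstretch_evo}). Your two side observations, that the possible indefiniteness of $C_{2}-PWP^{\ast}$ is exactly what forces the large-$\rho$ threshold (consistent with the footnote to Theorem \ref{linear-evolutionary-solution-theory}), and that the stated identity is the abstract energy balance multiplied by $2$ with $\partial_{0}^{-1}\dot{\varphi}=\varphi$ collapsing the $M_{2}$-term, are likewise correct.
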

\begin{proof}
This is a direct consequence of Theorem \ref{linear-evolutionary-solution-theory}
and Theorem \ref{thm:energy balance}.\end{proof}
\begin{rem}
We note here that only by assuming 
\[
C_{2}=\left(\begin{array}{ccc}
D^{\ast} & E^{\ast} & K\end{array}\right)W\left(\begin{array}{c}
D\\
E\\
K^{\ast}
\end{array}\right)
\]
we get that 
\[
M_{2}=0
\]
which in turn results in the fact that the latter system of equations
is a $(B)$-mother of micromorphic media (see Subsection \ref{sub:Micromorphic-Media})
for 
\[
B=\left(\begin{array}{cccccc}
1 & 0 & 0 & 0 & 0 & 0\\
0 & 1 & 0 & 0 & 0 & 0\\
0 & 0 & \iota_{0}^{*} & 0 & 0 & 0\\
0 & 0 & 0 & 1 & 0 & 0\\
0 & 0 & 0 & 0 & 1 & 0\\
0 & 0 & 0 & 0 & 0 & \iota_{0}^{*}
\end{array}\right).
\]

\end{rem}

\section{Conclusion\label{sec:Conclusion}}

We discussed several models from elasticity and have shown that they
fit into the same solution scheme given by Theorem \ref{linear-evolutionary-solution-theory}.
Moreover, we showed that energy is conserved with the help of Theorem
\ref{thm:energy balance}. Furthermore, we provided a mathematically
rigorous concept of deriving models from a given one. We proved that
many models of elasticity may be derived from the model for micromorphic
media, which under additional constraints on the material parameters
is a descendant of a model for microstretch.

\end{document}